\newcommand{\Rmnum}[1]{\expandafter\@slowromancap\romannumeral #1@}
\newtheorem{teorema}{Theorem}[section]
\newtheorem{lema}[teorema]{Lemma}
\newtheorem{corolario}[teorema]{Corollary}
\newtheorem{prop}[teorema]{Proposition}
\newtheorem{af}[teorema]{\textbf{Claim}}
\newtheorem{as}[teorema]{\textbf{Assumption}}
\newtheorem{definicao}[teorema]{\textbf{Definition}}
\newtheorem{observacao}[teorema]{\textbf{Remark}}
\newcommand{\PC}[1]{\ensuremath{\left(#1\right)}}
\providecommand{\norm}[1]{\lVert#1\rVert}
\newcommand{\N}{\mathbb{N}} 
\newcommand{\R}{\mathbb{R}} 
\def\R{\mathbb R}
\def\N{\mathbb N}
\def\geq{\geqslant}
\def\leq{\leqslant}
\def\*{\color{red}\blacksquare}
\date{\today} 
\thanks{ This work is based on the Ph.D. Thesis of the second author.   J. V. was partially supported by FAPESP 2009/17153-9.
D.S. was partially supported by  CNPq 305537/2012-1.}
\begin{document}
\author{Daniel Smania and Jos\'{e} Vidarte}
\title[Existence of $C^{k}$-Invariant Foliations for Lorenz-Type Maps]
{Existence of  $C^{k}$-Invariant Foliations for Lorenz-Type Maps}
{\small\address{Departamento de Matem\'atica,
ICMC-USP, Caixa Postal 668,  S\~ao Carlos-SP,CEP 13560-970
S\~ao Carlos-SP, Brazil}}
{\small\email{smania@icmc.usp.br}}
{\small\urladdr{www.icmc.usp.br/$\sim$smania/}}
{\small\address{Universidade Federal de Itajub\'{a}, Instituto de Matem\'{a}tica e Computa\c{c}\~{a}o.
Avenida BPS, 1303 Pinheirinho 37500903 - Itajub\'{a}, MG - Brasil}}
\email{vidarte@unifei.edu.br}

%

\begin{abstract}
In this paper under similar conditions to that Shaskov and Shil'nikov  [1994] we show that a $C^{k+1}$ Lorenz-type map $T$ has a $C^{k}$ foliation which is invariant under $T$. This allows us to associate $T$ to a $C^{k}$ one-dimensional transformation.
\end{abstract}
\keywords{ geometric Lorenz flow, Lorenz-type map, one dimensional Lorenz like map , foliation, fixed point.}
\maketitle


\setcounter{tocdepth}{1}
\tableofcontents
\section{ Introduction }\label{S:1}

The\textbf{ geometric Lorenz model} is an important example in dynamical systems, which was initially studied by Guckenheimer and Williams \cite{Guckenheimer1, Guckenheimer2, Williams} and Afraimovich, Bykov and Shil'nikov \cite{Shilnikov}. Their aim was to construct a simple mechanism which can give similar results to that Lorenz system
\begin{equation*}
\displaystyle (\dot{x},\dot{y},\dot{z})=(10(y-x),28 x-y-xz, -\frac{8}{3} z+xy),
 \end{equation*}
introduced by Edward Lorenz \cite{Lorenz}. In this system, Edward Lorenz numerically found that most solutions tended to a
certain attracting set, so-called  \emph{Lorenz Attractor} or  \emph{``strange" attractor},  and in so doing, he produced an important early example of `` chaos ". Another fact that was noted by Lorenz: the \emph{Lorenz Attractor} has  \emph{ sensitivity to initial conditions} (\emph{the butterfly effect}). No matter how close two solutions start, they will have a quite different  behaviour in the future. The \textbf{geometric Lorenz model} has been analysed topologically and proved to possess a ``strange" attractor with sensitive dependence on initial conditions. From these facts, we know that the \textbf{geometric Lorenz model} is crucial in the study of dynamical systems.  For more details, see Viana \cite{Viana}.

Given a $C^{k+1}$ geometric Lorenz flow $X$ on $\R^{n+2},$  by definition there exists a $C^{k+1}$ Poincar\'{e} map $T_{_{X}}:D^{*}\to D$ , often so-called Lorenz-type map  \cite{Shilnikov}. In Shaskov and Shil'nikov\cite{Shashkov} the authors showed that if a  $C^{2}$  Lorenz-type map $T_{_{X}}$ satisfies certain conditions, then there exists a $C^{1}$ foliation which is invariant under $T_{_{X}}$. It allows us to associate $T_{_{X}}$ to a $C^{1}$ one dimensional Lorenz like map $f_{_{X}}:[a,b]\setminus \{c\} \to [a,b]$ .  This association is so-called the reduction transformation $\mathcal{R},$ so we have $\mathcal{R}T_{_{X}}=f_{_{X}}$. This result allows $T_{_X}$ to be described in terms of a $C^{1}$ one-dimensional map.

Since the most deep results in  one-dimensional dynamics (as the phase-parameter relations in Jakobson's Theorem \cite{jakobson} and renormalization theory) relays on the study of sufficiently smooth families of transformations, to transfer this result to geometric Lorenz flow  we need to study the smoothness of the reduction transformation $\mathcal{R}$. There are already impressive results using this approach ( see Rychlik \cite{Rychlik}, Rovella \cite{Rovella}, Morales, Pacifico and Pujals \cite{Pacifico}, , Ara\'{u}jo and Varandas \cite{AraVar} ), however if we had a more deep knowledge of the regularity of $\mathcal{R}$ then  far more significative results could be achieved.

In this work we extend the main result of Shil'nikov and Shaskov \cite[Theorem]{Shashkov} as well as  \cite[Corollary 4.2]{Rychlik} of Rychlik, \cite[Proposition, p. 241]{Rovella} of Rovella and \cite[Lema 2]{Pacifico} of Morales, Pacifico and Pujals  . That is, we show that  if a $C^{k+1}$ Lorenz type map $T$  satisfies certain conditions (see Assumption \ref{Assumptions}), then there exists a $C^{k}$ foliation which is invariant under $T_{_{X}}$. This theorem allows us to introduce new coordinates $\{(x,\eta)\}$ in $D$  such that the map $T_{X}$ has the form $\overline{T}_{_{X}}(x,\eta)=(\overline{F}_{_{X}}(x,\eta),\overline{G}_{_{X}}(\eta))$ (see Afraimovich and Pesin \cite[P. 178]{pesin}); where $\overline{F}_{_{X}}$ and $\overline{G}_{_{X}}$ are $C^{k}$ functions, so $T_{_{X}}$ can be associate to a $C^{k}$ one-dimensional transformation $\overline{G}_{_{X}}:[a,b]\setminus \{c\} \to [a,b]$. This association would allow us to study the dynamical properties of the original flow using powerful techniques of  $C^{k}$ one-dimensional dynamics. Moreover, the result of this work can be useful in studying maps conside\-red in Robinson \cite{Robinson1}, Rychlik \cite{Rychlik}, Rovella \cite{Rovella},  Morales, Pacifico and Pujals \cite{Pacifico}, Ara\'{u}jo and Varandas \cite{AraVar}, Araujo and Pacifico \cite{arapa} and in some other cases.
\begin{figure}[h]
\centering\includegraphics[height=9cm,width=9cm]{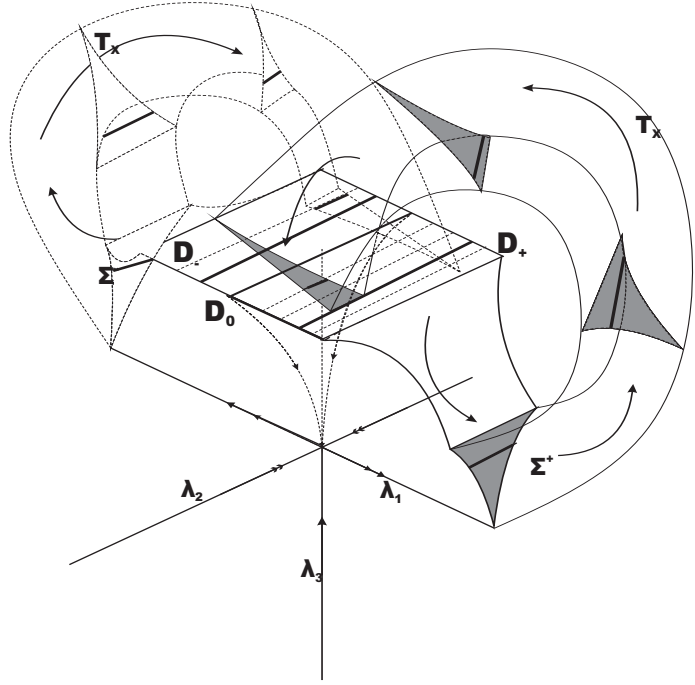}
 \caption{Geometric Lorenz flow in $\R^{3}.$}
 \label{IF}
\end{figure}

\section{ Statement of the Main Result}\label{S:1}

Let $\R^{n+1}:=\R^{n}\times\R$ be a $(n+1)$-Euclidean space . From now on, the symbol $\parallel . \parallel$ denotes a norm in $\R^{n},$ if applied to a vector or for the corresponding matrix norm if applied to a matrix. We also use the notation $$\norm{.}_{D}=\displaystyle \sup_{(x,y)\in D^{*}}\norm{.}$$  for norms of matrices and vector functions on $D^{*}$.

Define
\begin{eqnarray}\label{D}
D_{\empty}&:=&\{(x,y)\in \R^{n+1}:\norm{x}\leq 1, |y|\leq 1\},\nonumber \\
D_{+}&:=&\{(x,y)\in D: y>0\},\nonumber \\
D_{-}&:=&\{(x,y)\in D:y<0\},\nonumber \\
D_{0}&:=&\{(x,y) \in D:y=0\},\nonumber\\
D^{*}&:=&D_{-}\cup D_{+}=D\setminus D_{0}.
\end{eqnarray}
Notice that the sets $D_{+}$ and $D_{-}$ are separate by the hyperplane $D_{0}.$

Let us consider the map $T: D^{*} \to D $ given by
\begin{equation}\label{T0}
T(x,y)=(F(x,y),G(x,y))=(\overline{x},\overline{y}),
\end{equation}
where the vector function $F$ and the scalar function $G$ are differentiables on $D^{*}$ and $\partial_{y}G(x,y)$ is non-vanishing on $D^{*}.$

\begin{definicao}\label{A,B,C} We define the following functions:
\begin{equation*}\label{A}
A(x,y):=\partial_{x}F(x,y)(\partial_{y}G(x,y))^{-1},
\end{equation*}
\begin{equation*}\label{B}
B(x,y):=\partial_{y}F(x,y)(\partial_{y}G(x,y))^{-1},
\end{equation*}
\begin{equation*}\label{C}
C(x,y):=\partial_{x}G(x,y)(\partial_{y}G(x,y))^{-1}.
\end{equation*}
\end{definicao}
Here $A(x,y)$ is a $n \times n$ matrix, $B(x,y)$ is a $n$-column vector and $C(x,y)$ is a $n$-row vector.

\begin{as}\label{Assumptions}
We assume the following conditions hold on $T$:
\begin{itemize}
\item[$(L_{1})$] The functions $F$ and $G$  have the forms
\begin{displaymath}
F(x,y) = \left\{ \begin{array}{ll}
x_{+}^{\ast}+|y|^{\alpha}[B_{+}^{\ast}+\varphi_{+}(x,y)], &  y>0,\\
x_{-}^{\ast}+|y|^{\alpha}[B_{-}^{\ast}+\varphi_{-}(x,y)], & y<0,
\end{array} \right.
\end{displaymath}
\begin{displaymath}
G(x,y) = \left\{ \begin{array}{ll}
y_{+}^{\ast}+|y|^{\alpha}[A_{+}^{\ast}+\psi_{+}(x,y)], & y>0,\\
y_{-}^{\ast}+|y|^{\alpha}[A_{-}^{\ast}+\psi_{-}(x,y)], & y<0,
\end{array} \right.
\end{displaymath}
in a neighborhood of $D_{0},$ where $A_{+}^{\ast}, A_{-}^{\ast}$ are nonzero constants; $\alpha$ represents a strictly positive constant and the functions $\varphi_{\pm},$ $\psi_{\pm}$ are of class $C^{k+1}$. The derivatives of $\varphi_{\pm}$ and $\psi_{\pm}$ are uniformly bounded with respect to $x$ and satisfy the  estimates:
\begin{equation*}\label{L1}
\displaystyle \left\|\frac{\partial^{l+m}\varphi_{\pm}(x,y)}{\partial x^{l} \partial y^{m}} \right\|\leq K |y|^{\gamma-m}, \quad \displaystyle \left\|\frac{\partial^{l+m}\psi_{\pm}(x,y)}{\partial x^{l} \partial y^{m}} \right\|\leq K |y|^{\gamma-m},
\end{equation*}
where $\gamma>k-1,$ $K$ is a positive constant, $l=0,1,\ldots,k+1$ $m=0,1,\ldots,k+1$ and $l+m\leq k+1$.
\item[$(L_{2})$] The following inequality holds: \begin{equation*}\label{L2}
1-\|A\|_{D}>2\sqrt{\|B\|_{D}\|C\|_{D}}.
\end{equation*}
\item[$(L_{3})$]The following relations hold:

\item[(a)]
\begin{equation*}\label{L3}
\frac{(2!)^{2}\left(||A||_{D}+||C||_{D}||B||_{D}\right)\displaystyle\max_{m+n=1}\{(||A||_{D}+||B||_{D})^{m}(||C||_{D}+1)^{n}\}}{(||\partial_{y}G||_{D})^{-1}\left(1+||A||_{D}+\sqrt{(1-||A||_{D})^{2}-4||B||_{D}||C||_{D}}\right)^{2}}<1.
\end{equation*}
\item[(b)] for $k\geq 2 $
  \begin{equation*}\label{L30}
\frac{(2k!)^{2}\left(||A||_{D}+||C||_{D}||B||_{D}\right)\displaystyle\max_{m+n=k}\{(||A||_{D}+||B||_{D})^{m}(||C||_{D}+1)^{n}\}}{(||\partial_{y}G||_{D})^{-k}\left(1+||A||_{D}+\sqrt{(1-||A||_{D})^{2}-4||B||_{D}||C||_{D}}\right)^{2}}<1,
\end{equation*}
and
\begin{equation*}\label{L301}
 \norm{\partial_{y}G}_{D}\geq \frac{1}{4}\quad  \mbox{or} \quad \norm{\partial_{x}F}_{D}\geq \frac{1}{4}.
\end{equation*}
\end{itemize}
\end{as}
The following  set will be useful for defining the domains of several maps:
\begin{equation*}\label{Dx}
D_{x}:=\{x\in \R^{n} \quad \mbox{for which there exists a  } y \in \R \quad \mbox{with} \quad (x,y) \in D\}.
\end{equation*}
Given a map $h:U\subset\R^{n}\to \R,$ we defined its graph as
\begin{equation*}
\mbox{graph}(h):=\{(x,h(x)):x\in U\}.
\end{equation*}
\begin{definicao}\label{F1}
A family of functions $\mathcal{F}_{D}=\{h(x)\}$ is called a foliation of $D$ with $C^{m}$ leaves $(m\geq 0)$ given by the graphs of functions $y=h(x)$ if the following three conditions are satisfied:
\begin{itemize}
\item[$(a)$] The domain $Dom(h(x))$ of  every function $h(x) \in \mathcal{F}_{D}$ is an open and connected set in $D_{x}$ and its graph lies entirely in $D$;
\item[$(b)$] for every point $(x_{0},y_{0})\in D$ there is a unique function $h(x)\in \mathcal{F}_{D}$ such that $x_{0}\in Dom(h(x))$ and $y_{0}=h(x_{0}),$ this function will be denoted by $h(x;x_{0},y_{0})$;
\item[$(c)$] for every  point $(x_{0}, y_{0})\in D$ the function $x \mapsto h(x;x_{0},y_{0})$  is of class $C^{m}.$
\end{itemize}
\end{definicao}
The graphs of the functions $h(x)$ are called the leaves of $\mathcal{F}_{D}$ and the leaf that contain $(x_{0},y_{0})\in D$ will be denoted by $\mathcal{F}_{(x_{0},y_{0})}$.

\begin{definicao}\label{DfCr}
A foliation $\mathcal{F}_{D}$ is called $C^{r}$-foliation $(r\geq0)$ if the function $$(x; x_{0},y_{0}) \mapsto h(x; x_{0},y_{0})$$ is of class $C^{r}.$
\end{definicao}
\begin{definicao}\label{ITF}
A foliation $\mathcal{F}_{D}$ is called $T$-invariant if
\begin{itemize}
\item[(a)] the hyperplane $D_{0} \in \mathcal{F}_{D}$;
\item[(b)] for each leaf $\mathcal{F}_{(x_{0},y_{0})}\in \mathcal{F}_{D}$, with $\mathcal{F}_{(x_{0},y_{0})}\neq D_{0}$, there is $\mathcal{F}_{T(x_{0},y_{0})} \in \mathcal{F}$ such that $T(\mathcal{F}_{(x_{0},y_{0})})\subset \mathcal{F}_{T(x_{0},y_{0})}.$
\end{itemize}
\end{definicao}
\begin{figure}[h]
\centering
\includegraphics[height=5cm,width=9cm]{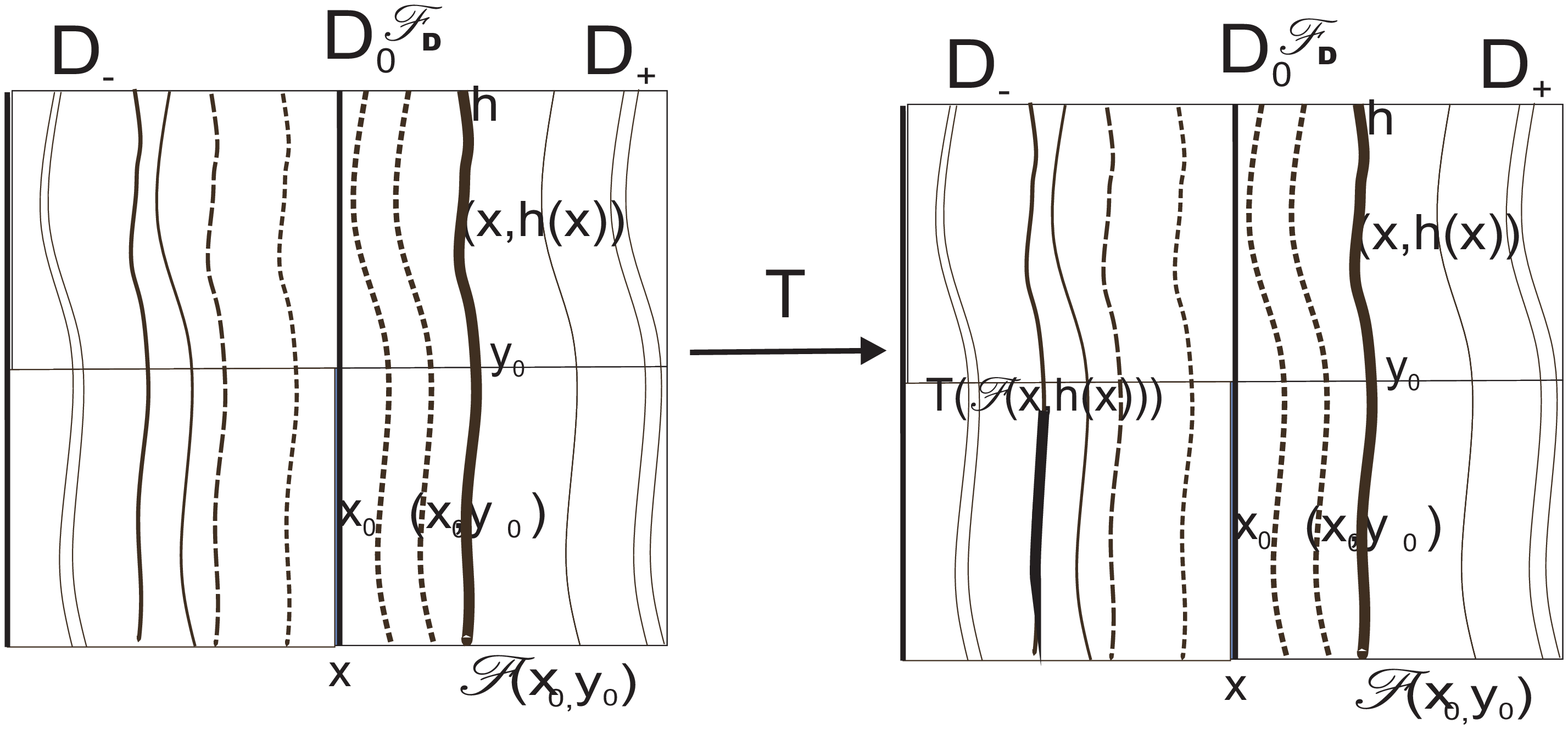}
 \caption{Geometric interpretation of a $T$-invariant foliation.}
 \label{IF}
\end{figure}
\begin{observacao}\label{R:1}
Suppose that  $\overline{\nu}:D\subset \R^{n+1} \to \R^{n}$ is a $C^{k}$ function completely integrable, that is, there exists a solution for the initial value problem for the differential equation
\begin{equation}\label{FI}
 \nabla y(x)=\overline{\nu}(x,y(x)), y(x_{0})=y_{0},
\end{equation}
for all $(x_{0},y_{0})\in D,$
where $y:U(x_{0})\subset D \to [-1,1]$ and $U(x_{0})$ is a neighborhood of $x_{0}.$
Then, by using Frobenious-Dieudonn\'{e} Theorem \cite[Theorem 10.9.5]{Dieudonne} we have that
 \begin{equation*}
 \mathcal{\overline{F}}_{D}:=\{\mbox{graph}(y): \quad h \quad \mbox{satisfing \eqref{FI}}\},
  \end{equation*}
  determines a foliation, that is,  the leaves are the graphs of the solutions of the differential equation defined by the function $\overline{\nu}: D \to \R^{n}.$
\end{observacao}

We are ready to state our main result.
\begin{teorema}[\textbf{Main Theorem}]\label{MT}
Suppose that the map $T$ satisfies Assumption \ref{Assumptions}. Then, there is  a $T$-invariant $C^{k}$-foliation $\mathcal{F}_{D}$ with $C^{k+1}$ leaves.
\end{teorema}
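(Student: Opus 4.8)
The plan is to reduce the existence of the invariant foliation to the construction of a single slope field, obtained as a fixed point of a graph--transform operator, and then to integrate it. Write a prospective leaf as the graph of $y=h(x)$ with $\nabla h(x)=\nu(x,h(x))$ for a row--vector valued $\nu\colon D\to\R^{1\times n}$. Differentiating the relation $G(x,h(x))=\bar h(F(x,h(x)))$ that expresses ``$T(\mathrm{graph}(h))\subset\mathrm{graph}(\bar h)$'', and using Definition \ref{A,B,C} together with the fact that $\partial_{y}G$ is a non--vanishing scalar, one gets $\partial_{y}G\,(C+\nu(x,h(x)))=\nu(T(x,h(x)))\,\partial_{y}G\,(A+B\,\nu(x,h(x)))$; cancelling $\partial_{y}G$ and letting the base point vary, the foliation generated by $\nu$ is $T$--invariant in the sense of Definition \ref{ITF} if and only if $\nu|_{D_{0}}=0$ and
\begin{equation*}
\nu(T(x,y))\bigl(A(x,y)+B(x,y)\nu(x,y)\bigr)=C(x,y)+\nu(x,y),\qquad (x,y)\in D^{*}.
\end{equation*}
Since $B\nu$ has rank one, $\nu(T(x,y))\,(B(x,y)\nu(x,y))=(\nu(T(x,y))B(x,y))\,\nu(x,y)$ with $\nu(T(x,y))B(x,y)$ a scalar, so this equation is equivalent to the pull--back form
\begin{equation*}
\nu(x,y)=\frac{\nu(T(x,y))\,A(x,y)-C(x,y)}{1-\nu(T(x,y))\,B(x,y)}=:(\mathcal{G}\nu)(x,y).
\end{equation*}
By $(L_{1})$ the quantities $A$ and $C$ carry a factor $|y|^{1+\gamma}$ and hence vanish at $D_{0}$, while $B$ stays bounded and $T(x,y)\to(x^{\ast}_{\pm},y^{\ast}_{\pm})$ as $y\to0^{\pm}$; thus $\mathcal{G}\nu$ extends continuously to all of $D$ with $(\mathcal{G}\nu)|_{D_{0}}=0$, and we look for a fixed point of $\mathcal{G}$.

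First I would run a $C^{0}$ contraction. Let $\kappa$ be the smaller root of $\|B\|_{D}t^{2}-(1-\|A\|_{D})t+\|C\|_{D}=0$; by $(L_{2})$ it is real, positive, and satisfies $\kappa\|B\|_{D}<\tfrac12(1-\|A\|_{D})<\tfrac12$. On the complete metric space $\mathcal{X}=\{\nu\in C^{0}(D,\R^{1\times n}):\ \nu|_{D_{0}}=0,\ \|\nu\|_{D}\le\kappa\}$ the quadratic inequality defining $\kappa$ gives $\mathcal{G}(\mathcal{X})\subset\mathcal{X}$, and a direct estimate yields $\|\mathcal{G}\nu_{1}-\mathcal{G}\nu_{2}\|_{D}\le\tfrac{\|A\|_{D}+\kappa\|B\|_{D}}{1-\kappa\|B\|_{D}}\,\|\nu_{1}-\nu_{2}\|_{D}$, the displayed factor being $<1$ again by $(L_{2})$. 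The Banach fixed point theorem produces a unique $\nu^{*}\in\mathcal{X}$, a continuous $T$--invariant slope field.

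Next I would bootstrap the regularity of $\nu^{*}$. Since $F,G\in C^{k+1}$, the maps $A,B,C$ are $C^{k}$ on $D^{*}$, and by the estimates $\|\partial_{x}^{l}\partial_{y}^{m}\varphi_{\pm}\|,\|\partial_{x}^{l}\partial_{y}^{m}\psi_{\pm}\|\le K|y|^{\gamma-m}$ of $(L_{1})$ with $\gamma>k-1$, their derivatives up to order $k$ stay bounded up to $D_{0}$ (the factors $|y|^{-m}$ gained by differentiating, together with the analogous blow--up of the derivatives of $\partial_{y}G$, are beaten by the factor $|y|^{1+\gamma}$ in $A,C$). For $j=1,\dots,k+1$ let $S_{j}\subset\mathcal{X}$ consist of those $\nu$ whose derivatives up to order $j-1$ exist and are bounded by constants $M_{0},\dots,M_{j-1}$, with $D^{j-1}\nu$ Lipschitz of constant $M_{j}$; each $S_{j}$ is closed in $C^{j-1}(D)$. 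Differentiating $(\mathcal{G}\nu)(x,y)=\Psi(\nu(T(x,y)),x,y)$ with $\Psi(\mu,x,y)=(\mu A-C)(1-\mu B)^{-1}$ by the Fa\`{a} di Bruno and Leibniz rules, the only term of $D^{j}(\mathcal{G}\nu)$ that is of order $j$ in $\nu$ is $\partial_{\mu}\Psi\cdot(D^{j}\nu)(T)\cdot(DT)^{\otimes j}$, whose size is $\lesssim\tfrac{\|A\|_{D}+\|C\|_{D}\|B\|_{D}}{(1-\kappa\|B\|_{D})^{2}}\,\|\partial_{y}G\|_{D}^{\,j}\,\max_{m+n=j}(\|A\|_{D}+\|B\|_{D})^{m}(\|C\|_{D}+1)^{n}$ times a combinatorial constant of the order of a factorial squared, while all the remaining terms involve derivatives of $\nu$ of order $<j$ with coefficients already bounded in terms of $M_{0},\dots,M_{j-1}$ and the derivatives of $A,B,C$. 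Conditions $(L_{3})$ say precisely that the above controlling quantity is $<1$ for every $j\le k+1$ (using that $\bigl(1+\|A\|_{D}+\sqrt{(1-\|A\|_{D})^{2}-4\|B\|_{D}\|C\|_{D}}\bigr)^{2}=4(1-\kappa\|B\|_{D})^{2}$); hence, choosing $M_{0},\dots,M_{j-1}$ and then $M_{j}$ large enough inductively, one gets $\mathcal{G}(S_{j})\subset S_{j}$ and $\mathcal{G}$ a contraction on $S_{j}$ in a norm equivalent to $\|\cdot\|_{C^{j-1}(D)}$ after down--weighting the lower--order derivatives. By uniqueness the fixed point in $S_{j}$ is $\nu^{*}$, so with $j=k+1$ we obtain $\nu^{*}\in C^{k}(D)$.

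Finally I would assemble the foliation. The $C^{k}$ (in particular $C^{1}$) slope field $\nu^{*}$ generates, via Remark \ref{R:1}, a foliation $\mathcal{F}_{D}$ of $D$ whose leaf through $(x_{0},y_{0})$ is the graph of the solution $h(\,\cdot\,;x_{0},y_{0})$ of $\nabla h=\nu^{*}(x,h)$, $h(x_{0})=y_{0}$ (complete integrability is automatic for $n=1$, and for $n\ge2$ it follows since $\mathcal{G}$ preserves this closed condition and the iterates $\mathcal{G}^{m}0$ are integrable). Smooth dependence of the solutions on the initial data (Frobenius--Dieudonn\'e for $n\ge2$) shows that $(x;x_{0},y_{0})\mapsto h(x;x_{0},y_{0})$ is $C^{k}$, so $\mathcal{F}_{D}$ is a $C^{k}$--foliation, and a routine bootstrap of $\nabla h=\nu^{*}(x,h)$, gaining one derivative at a time, shows that every leaf is $C^{k+1}$. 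Since $\nu^{*}|_{D_{0}}=0$ the hyperplane $D_{0}$ is a leaf, and $\nu^{*}=\mathcal{G}\nu^{*}$ is, leaf by leaf, exactly the condition $T(\mathcal{F}_{(x_{0},y_{0})})\subset\mathcal{F}_{T(x_{0},y_{0})}$; hence $\mathcal{F}_{D}$ is $T$--invariant, which proves the theorem. The main obstacle is the bootstrap of the third step: obtaining \emph{uniform} bounds on the high--order derivatives of $\mathcal{G}\nu$ up to the singular set $D_{0}$, where $T$ degenerates, while keeping the resulting contraction constants below $1$ --- this is exactly the balance encoded in the growth estimate $\gamma>k-1$ of $(L_{1})$ against the factorial conditions $(L_{3})$.
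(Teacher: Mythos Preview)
Your setup and $C^{0}$ contraction argument match the paper's exactly, and your derivation of the functional equation $\nu=\mathcal{G}\nu$ is correct. The substantive difference is in the regularity step: you propose a direct invariant--set bootstrap with sets $S_{j}$, whereas the paper augments the operator to $\widetilde{N}_{i}(\overline{\nu}_{0},\ldots,\overline{\nu}_{i})=(\Gamma\overline{\nu}_{0},\Psi^{1}(\overline{\nu}_{0},\overline{\nu}_{1}),\ldots,\Psi^{i}(\overline{\nu}_{0},\ldots,\overline{\nu}_{i}))$ and applies the Hirsch--Pugh Fiber Contraction Theorem, obtaining $C^{0}$ convergence of $D^{i}\Gamma^{n}(\overline{\nu})$ for each $i\le k$ and then interchanging limit and differentiation.

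Your formulation has a genuine gap. To show $\mathcal{G}(S_{k+1})\subset S_{k+1}$ you must bound the Lipschitz constant of $D^{k}(\mathcal{G}\nu)$. In the model case $\mathcal{G}\nu=\partial_{\mu}\Psi\cdot(\nu\circ T)$ one has
\[
\mathrm{Lip}\bigl(D^{k}(\mathcal{G}\nu)\bigr)\;\lesssim\;\|\partial_{\mu}\Psi\|\,\|DT\|^{k+1}\,\mathrm{Lip}(D^{k}\nu)\;+\;(\text{terms in }M_{0},\ldots,M_{k}),
\]
so the coefficient of $M_{k+1}$ is the level-$(k+1)$ quantity, involving $\|\partial_{y}G\|_{D}^{\,k+1}$ and $\max_{m+n=k+1}(\cdot)$. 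Assumption $(L_{3})$ only supplies the corresponding inequality for exponents $1$ through $k$; your sentence ``Conditions $(L_{3})$ say precisely that the above controlling quantity is $<1$ for every $j\le k+1$'' is off by one. Without $\theta_{k+1}<1$ you cannot choose $M_{k+1}$ so that $\theta_{k+1}M_{k+1}+C\le M_{k+1}$, and the invariance of $S_{k+1}$ fails. (A secondary inaccuracy: $D^{k}B$ need not stay bounded near $D_{0}$ when $k-1<\gamma<k$; only the full combinations in $D^{k}(\mathcal{G}\nu)$ vanish there, as the paper shows in its Lemma on $U_{1}^{k},U_{2}^{k},U_{3}^{k}$.)

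The paper's fiber--contraction route sidesteps this: it never needs an invariant ball in the top fiber, only that $\Psi^{k}(\overline{\nu}_{0},\ldots,\overline{\nu}_{k-1},\cdot)$ is a uniform contraction on $\mathcal{D}_{k}$, which is exactly the level-$k$ inequality in $(L_{3})$. If you want to keep a direct argument, drop the Lipschitz clause: take $S_{k}'=\{\nu\in C^{k}:\|D^{i}\nu\|\le M_{i},\ i\le k\}$, show $\mathcal{G}(S_{k}')\subset S_{k}'$ (this needs only $\Theta(k)<1$ to absorb the top term), and prove $\mathcal{G}$ is a contraction on $S_{k}'$ in a weighted $C^{k}$ norm $\sum_{i}\epsilon^{k-i}\|D^{i}\cdot\|$; since $S_{k}'$ is closed in the Banach space $C^{k}(D)$, the fixed point lies in $C^{k}$. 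Either way, you must first check that $\mathcal{G}$ carries $C^{k}$ to $C^{k}$ across the singular set $D_{0}$, which is the content of the paper's Proposition on $\lim_{(a,b)\to(x,0)}D^{i}\Gamma(\mu)=0$ and requires the full estimates from $(L_{1})$ rather than just boundedness of $D^{i}A,D^{i}B,D^{i}C$.
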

As a byproduct of the preceding theorem we also have the following useful corollary, that say us that if the map $T$ satisfies Assumption \ref{Assumptions} it can be introduced new coordinates $\{(x,\eta)\}$ in $D$  such that the map $T$ has the form of skew-product $\overline{T}(x,\eta)=(\overline{F}(x,\eta),\overline{G}(\eta));$ where $\overline{F}$ and $\overline{G}$ are $C^{k}$ functions, so $T$ can be associate to a one-dimensional transformation $\overline{G}:[a,b]\setminus \{c\} \to [a,b]$ of class $C^{k}.$
\begin{corolario}\label{CMT}
Suppose that the map $T$ satisfies Assumption \ref{Assumptions}. Then, there exists a change of variable $\chi:D \to D $ such that $T$ can be associate with a skew-product  $\overline{T} :D^{*}\to D $ of class $C^{k}$ such that the  diagram 
\begin{equation*}
\xymatrix{
D^{*} \ar[d]^\chi \ar[r]^T & D \ar[d]^\chi \\
D^{*} \ar[r]^{\overline{T}} & D }
\end{equation*}
 is commutative, that is, $\chi \circ T= \overline{T} \circ \chi$ on $D^{*}.$
\end{corolario}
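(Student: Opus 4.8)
The plan is to straighten the $T$-invariant $C^{k}$-foliation $\mathcal{F}_{D}$ furnished by Theorem \ref{MT}. I look for a change of variables of the form $\chi(x,y)=(x,\eta(x,y))$ in which every leaf of $\mathcal{F}_{D}$ becomes a horizontal slice $\{\eta=\mathrm{const}\}$; because $T$ carries leaves into leaves, the $\eta$-component of the conjugated map $\chi\circ T\circ\chi^{-1}$ will then depend only on the leaf, hence only on $\eta$, which is precisely the skew-product structure asserted.

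Fix a point $\bar{x}$ in the interior of $D_{x}$, say $\bar{x}=0$. In the construction behind Theorem \ref{MT} every leaf of $\mathcal{F}_{D}$ is the graph of a function $h(\,\cdot\,;x_{0},y_{0})$ defined on all of $D_{x}$, and $D_{0}=\mathrm{graph}(0)$ is a leaf. Consequently each leaf is uniquely labelled by the value $\eta:=h(\bar{x};x_{0},y_{0})$ it takes over $\bar{x}$, and conversely each $(\bar{x},\eta)\in D$ determines the unique leaf $\mathcal{F}_{(\bar{x},\eta)}$. I then set
\begin{equation*}
\chi(x,y):=\bigl(x,\ h(\bar{x};x,y)\bigr),\qquad \chi^{-1}(x,\eta):=\bigl(x,\ h(x;\bar{x},\eta)\bigr).
\end{equation*}
These are mutually inverse by Definition \ref{F1}(b): the point $(x,h(x;\bar{x},\eta))$ lies on $\mathcal{F}_{(\bar{x},\eta)}$, so the value of that leaf over $\bar{x}$ is again $\eta$, and symmetrically. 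Since $\mathcal{F}_{D}$ is a $C^{k}$-foliation, the map $(x;x_{0},y_{0})\mapsto h(x;x_{0},y_{0})$ is $C^{k}$; freezing the first argument to $\bar{x}$ shows $\chi\in C^{k}$, and freezing $x_{0}=\bar{x}$ shows $\chi^{-1}\in C^{k}$. Moreover $\chi$ fixes $D_{0}$ pointwise, hence maps $D^{*}$ bijectively onto $D^{*}$, so $\chi\colon D\to D$ is the desired $C^{k}$ change of variables.

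Now put $\overline{T}:=\chi\circ T\circ\chi^{-1}$ on $\chi(D^{*})=D^{*}$. Given $(x,\eta)\in D^{*}$, the point $p:=\chi^{-1}(x,\eta)=(x,h(x;\bar{x},\eta))$ lies on the leaf $\mathcal{F}_{(\bar{x},\eta)}\neq D_{0}$, so by $T$-invariance (Definition \ref{ITF}(b)) we have $T(p)\in\mathcal{F}_{T(\bar{x},\eta)}$. Since $\chi$ does not alter the first coordinate and $T(p)=(F(p),G(p))$,
\begin{equation*}
\overline{T}(x,\eta)=\bigl(F\bigl(x,h(x;\bar{x},\eta)\bigr),\ h\bigl(\bar{x};T(\bar{x},\eta)\bigr)\bigr)=:\bigl(\overline{F}(x,\eta),\ \overline{G}(\eta)\bigr),
\end{equation*}
where the second entry is the value over $\bar{x}$ of the leaf $\mathcal{F}_{T(\bar{x},\eta)}$ and hence a function of $\eta$ alone. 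Finally $T\in C^{k+1}(D^{*})$ while $\chi,\chi^{-1}\in C^{k}$, so $\overline{T}$, $\overline{F}$ and $\overline{G}$ are $C^{k}$ on $D^{*}$; taking $[a,b]=[-1,1]$ and $c=0$ gives the one-dimensional factor $\overline{G}\colon[a,b]\setminus\{c\}\to[a,b]$ of class $C^{k}$, and the diagram commutes by the very definition of $\overline{T}$.

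The step demanding the most care is the global well-definedness of $\chi$ on $D$: one must extract from the fixed-point construction behind Theorem \ref{MT} that the leaves really are graphs over the whole of $D_{x}$ (so that $\bar{x}$ lies in every leaf's domain and $\chi$ is a bijection of $D$) and that $h(x;x_{0},y_{0})$ is $C^{k}$ uniformly up to $D_{0}$ and up to $\partial D$. Granting this, the remainder is formal: the inverse-function identity for $\chi$, and the observation that leaf-to-leaf invariance forces the $\eta$-component of $\overline{T}$ to be independent of $x$.
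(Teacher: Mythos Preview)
Your argument is correct and is precisely the standard straightening construction one expects here. The paper itself does not give a proof at all: it simply writes ``The details can be found in \cite[P.~178]{pesin}.'' Your proposal supplies those details --- labelling each leaf by its height over a fixed transversal $\{\bar x\}\times[-1,1]$, checking that $\chi$ and $\chi^{-1}$ inherit $C^{k}$ regularity from the $C^{k}$ dependence of $h(x;x_{0},y_{0})$ on its arguments, and reading off the skew-product form from leaf-to-leaf invariance --- which is exactly the Afra{\u\i}movich--Pesin argument the paper cites. Your closing caveat about global graphs over $D_{x}$ is the right thing to flag; in the present setting it follows because the fixed point $\nu^{*}\in\mathcal{A}_{L}$ is bounded by $L$ and vanishes on $D_{0}$, so the integral curves of $\nabla y=\nu^{*}(x,y)$ stay in $D$ and extend across all of $D_{x}$.
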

\begin{proof}
The details can be found in \cite[P. 178]{pesin}.
\end{proof}
\section{Overview of the Proof of Main Theorem \ref{MT} }
The principal aim of this section is to sketch the proof of our main theorem.
%
%

\subsection{The big picture} Bearing in mind the Remark \ref{R:1} and following the ideas of Robinson \cite{Robinson}. The foliation $\mathcal{F}_{D}$ of Theorem \ref{MT} will be obtained as the integral surfaces of a $C^{k}$ completely integrable function $\nu:D\subset \R^{n+1} \to \R^{n},$ which will be a fixed point of an appropriate graph transform $\Gamma.$ Next, will be given a brief outline of the idea behind the graph transform $\Gamma$, which is also illustrated in Figure \ref{IF}.
\begin{figure}[h]
\centering
\includegraphics[height=8cm,width=10cm]{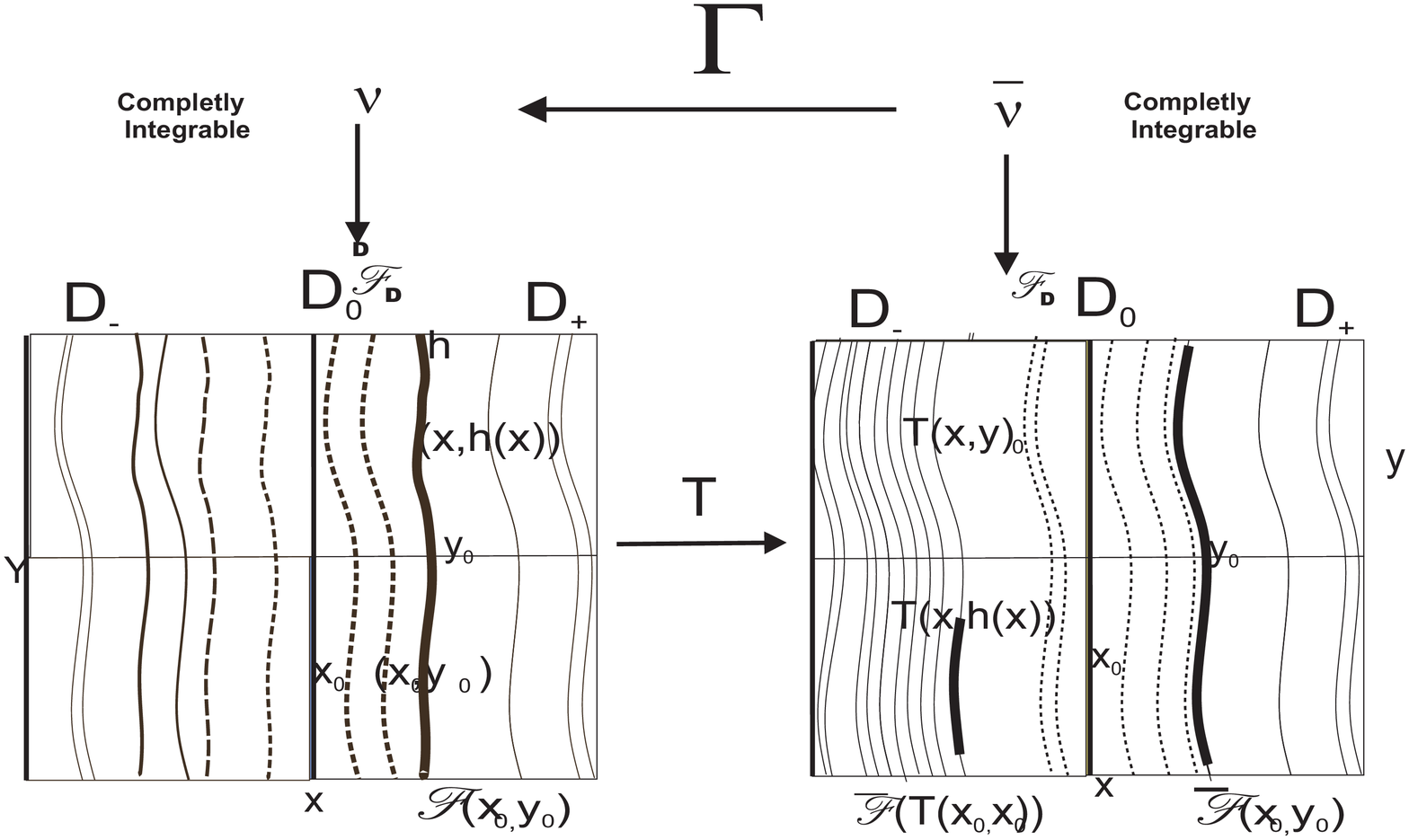}
 \caption{Graph transform $\Gamma.$}
 \label{IF}
\end{figure}
Our goal is to find a $C^{k}$ integrable function $\nu^{*}:D\subset \R^{n+1} \to \R^{n},$ so that for every integral surface $h$ its graphs is invariant under $T(x,y)=(F(x,y),G(x,y)):=(\overline{x},\overline{y}),$ which means that
\begin{eqnarray*}\label{Fol2S01}
F(x,h(x))&=& \overline{x} , \nonumber\\
G(x,h(x))&=& \overline{h}(\overline{x}),
\end{eqnarray*}
where $\overline{h}$ is an integral surface of $\nu^{*}.$ To find $\nu^{*}$ we take any completely integrable function $\overline{\nu}:D \to \R^{n}$  and seek a completely integrable function $\nu:D\to \R^{n}$ so that
\begin{eqnarray*}\label{Fol2S011}
F(x,h(x))&=& \overline{x} , \\
G(x,h(x))&=& \overline{h}(\overline{x}),
\end{eqnarray*}
where $h$ is an integral surface of $\nu$ and $\overline{h}$ is an integral surface of $\overline{\nu}.$\\
If such a function exists, we define the graph transform of $\overline{\nu}$ via $\Gamma(\overline{\nu}):=\nu$ and note that the desired function $\nu^{*}$ is a fixed point of the graph transform so that $\Gamma(\nu^{*})=\nu^{*}.$  It is not difficult to see that
\begin{equation*}\label{E:1}
\Gamma(\overline{\nu})(x,y)=\begin{cases}
\displaystyle \frac{\overline{\nu} \circ T(x,y)\partial_{y}G(x,y) -\partial_{y}F(x,y)}{\partial_{x}F(x,y)-\overline{\nu} \circ T(x,y)\partial_{x}G(x,y)}, & y \neq 0,\\
0, & y=0.
\end{cases}
\end{equation*}

Notice that, in view of Definition \ref{A,B,C}, we can rewrite the operator $\Gamma$ in the following way:
\begin{equation*}
 \displaystyle\Gamma(\overline{\nu})(x,y)=\begin{cases}
\displaystyle\frac{(\overline{\nu}\circ TA-C)}{(1-\overline{\nu} \circ T B)}(x,y),& y\neq 0,\\
0, &y=0.
\end{cases}
\end{equation*}
It is not difficult to show that the graph transform $\Gamma$ is well defined on a complete sub-space $\mathcal{A}_{L}$ of the continuous function from $D$ to $\R^{n},$ and that $\Gamma$ has a fixed point $\nu^{*}$ (see Theorem \ref{fgamma}). Our goal in this work is to show that the fixed point $\nu^{*}$ is a $C^{k}$ function completely integrable. Then, by using the Idea 1 we have that the graphs of the integral surfaces give the foliation $\mathcal{F}_{D}$ of Theorem \ref{MT}.
\subsection{The Operator $\Gamma$}\label{OPG}
Our goal in this section is to give a rigorous definition and state some properties of the operator $\Gamma$ described informally in the last subsection. We begin by introducing the following definition.
\begin{definicao}\label{DAL}
Let $L \geq 0.$ We denote by $\mathcal{A}_{L}$  the set of all the functions $\nu:D\to \R^{n} $ which satisfies the following conditions:
\begin{itemize}
\item[$(a)$] $\nu$ is continuous on $D;$
\item[$(b)$]$\|\nu\|\leq L;$
\item[$(c)$] $\nu(x,0)=0,$  \mbox{if} $\|x\|\leq1.$
\end{itemize}
\end{definicao}
\begin{observacao}\label{DALC}
Since $\R^{n}$ is a complete normed space, it is not difficult to show that $\mathcal{A}_{L}$ is a complete metric space with the norm of the supremum.
\end{observacao}
Now we are ready to define the most important operator of our work. This operator is denoted by $\Gamma$ and is defined as in ~\cite[Eq. (6)]{Shashkov} by
 \begin{definicao}\label{gamma}
  \begin{equation}
 \begin{array}{cccc}
\Gamma \ : & \! \mathcal{A}_{L} & \! \longrightarrow
& \! \Gamma(\mathcal{A}_{L}) \\
& \! \overline{\nu }& \! \longmapsto
& \! \nu=\Gamma(\overline{\nu}),
\end{array}
\end{equation}
where the function $\Gamma(\overline{\nu}):D \to \R^{1\times n}$ is given by
\begin{equation*}
 \displaystyle\Gamma(\overline{\nu})(x,y)=\begin{cases}
\displaystyle\frac{(\overline{\nu}\circ TA-C)}{(1-\overline{\nu} \circ T B)}(x,y),& y\neq 0,\\
0, &y=0,
\end{cases}
\end{equation*}
with the functions $A,B$ and $C$ as in Definition \ref{A,B,C}.
\end{definicao}

Next, we list a few basic properties of the operator $\Gamma.$ Details may be found in ~\cite[Lemma 1]{Shashkov} or \cite[Proposition 3.17]{vidarte}.
\begin{prop}\label{fgamma}
There is a constant  $L\geq 0$ such that
\begin{itemize}
  \item[$(a)$]$\Gamma(\mathcal{A}_{L})\subset \mathcal{A}_{L}.$
  \item[$(b)$] The operator $\Gamma:\mathcal{A}_{L}\to \mathcal{A}_{L}$ is a contraction.
  \item[$(c)$] The operator $\Gamma:\mathcal{A}_{L} \to \mathcal{A}_{L}$ has a unique fixed point $\nu^{*}$ completely integrable function.
  \item[$(d)$]The operator $\Gamma$ takes completely integrable function  into completely integrable function. Moreover, if $\overline{\mathcal{F}}_{D}$ and $\mathcal{F}_{D}$ are foliations defined by the completely integrable functions $\overline{\nu}$ and $\Gamma(\overline{\nu})$ respectively; then $T$ takes every leaf $\mathcal{F}_{(x_{0},y_{0})} \in \mathcal{F}_{D},$ $\mathcal{F}_{(x_{0},y_{0})}\neq D_{0},$ into a part of the leaf $\overline{\mathcal{F}}_{T(x_{0},y_{0})} \in \overline{\mathcal{F}}_{D}$, that is, $T(\mathcal{F}_{(x_{0},y_{0})})\subset \overline{\mathcal{F}}_{T(x_{0},y_{0})}$.
  \end{itemize}
\end{prop}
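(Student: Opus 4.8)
The plan is to establish (a)--(d) in order, letting the quantitative condition $(L_{2})$ do the analysis in (a) and (b) and a geometric, implicit-function reading of $\Gamma$ do (c) and (d). For (a), fix $L$ to be a number strictly between the two roots of $q(t):=\|B\|_{D}\,t^{2}-(1-\|A\|_{D})\,t+\|C\|_{D}$; these roots are real and positive precisely because $(L_{2})$ is the statement $(1-\|A\|_{D})^{2}>4\|B\|_{D}\|C\|_{D}$ (which also forces $\|A\|_{D}<1$ and $\|B\|_{D}\|C\|_{D}<\tfrac{1}{4}$), and for such $L$ one has $L\|B\|_{D}<\tfrac{1}{2}$ and $q(L)<0$, i.e. $L\|A\|_{D}+\|C\|_{D}<L(1-L\|B\|_{D})$. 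Then for $\overline\nu\in\mathcal A_{L}$ and $y\neq 0$, the bound $\|\overline\nu\circ T\|\le L$ gives
\begin{equation*}
\|\Gamma(\overline\nu)(x,y)\|=\frac{\|(\overline\nu\circ T)A-C\|}{|1-(\overline\nu\circ T)B|}\le\frac{L\|A\|_{D}+\|C\|_{D}}{1-L\|B\|_{D}}\le L,
\end{equation*}
so $\|\Gamma(\overline\nu)\|_{D}\le L$. Continuity on $D^{*}$ is clear since the denominator stays $\ge 1-L\|B\|_{D}>0$; the one delicate point is continuity across $D_{0}$, where I would use $(L_{1})$: differentiating the normal forms of $F$ and $G$ shows $A(x,y)\to 0$ and $C(x,y)\to 0$ as $y\to 0$, because the $x$-derivatives of $\varphi_{\pm},\psi_{\pm}$ carry a factor $|y|^{\gamma}$ with $\gamma>k-1\ge 0$ which dominates the $|y|^{\alpha-1}$ growth of $(\partial_{y}G)^{-1}$; hence $\Gamma(\overline\nu)(x,y)\to 0=\Gamma(\overline\nu)(x,0)$. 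With $\Gamma(\overline\nu)(x,0)=0$ by definition, this gives $\Gamma(\overline\nu)\in\mathcal A_{L}$.

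For (b), set $\delta=\overline\nu_{1}-\overline\nu_{2}$, write $\Gamma(\overline\nu_{1})-\Gamma(\overline\nu_{2})$ over the common denominator $(1-(\overline\nu_{1}\circ T)B)(1-(\overline\nu_{2}\circ T)B)$, and expand; the numerator comes out linear in $\delta\circ T$, yielding
\begin{equation*}
\|\Gamma(\overline\nu_{1})-\Gamma(\overline\nu_{2})\|_{D}\le\frac{(\|A\|_{D}+\|B\|_{D}\|C\|_{D})(1+2L\|B\|_{D})}{(1-L\|B\|_{D})^{2}}\,\|\delta\|_{D}.
\end{equation*}
A direct check shows that, for $L$ as above and close enough to the smaller root of $q$, the coefficient on the right is $<1$; this is the only place the quantitative form of $(L_{2})$ (namely $\|A\|_{D}+2\sqrt{\|B\|_{D}\|C\|_{D}}<1$) is really used. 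So $\Gamma$ is a contraction, and since $\mathcal A_{L}$ is a complete metric space (Remark~\ref{DALC}), Banach's fixed point theorem yields a unique fixed point $\nu^{*}\in\mathcal A_{L}$.

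The geometric core is the construction behind (d), which also delivers the complete integrability of $\nu^{*}$. Given a completely integrable $\overline\nu\in\mathcal A_{L}$ with foliation $\overline{\mathcal F}_{D}$ (Remark~\ref{R:1}), fix $(x_{0},y_{0})\in D^{*}$, let $\overline h$ be the leaf of $\overline{\mathcal F}_{D}$ through $T(x_{0},y_{0})$, and put $\Phi(x,y)=G(x,y)-\overline h(F(x,y))$ near $(x_{0},y_{0})$. Since $T(x_{0},y_{0})\in\mathrm{graph}(\overline h)$ and $\nabla\overline h=\overline\nu$ along $\mathrm{graph}(\overline h)$, a computation gives $\partial_{y}\Phi(x_{0},y_{0})=\partial_{y}G(x_{0},y_{0})\,(1-\overline\nu(T(x_{0},y_{0}))\,B(x_{0},y_{0}))$, which is nonzero because $\partial_{y}G$ never vanishes and $|1-\overline\nu(T(x_{0},y_{0}))\,B(x_{0},y_{0})|\ge 1-L\|B\|_{D}>0$; the implicit function theorem then produces a $C^{k+1}$ function $h$ with $h(x_{0})=y_{0}$ and $G(x,h(x))=\overline h(F(x,h(x)))$ near $x_{0}$. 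Differentiating this identity and solving for $\nabla h$ reproduces exactly $\nabla h(x)=\Gamma(\overline\nu)(x,h(x))$, so each such $h$ is an integral surface of $\Gamma(\overline\nu)$; and $T(x,h(x))=(F(x,h(x)),\overline h(F(x,h(x))))\in\mathrm{graph}(\overline h)$, which is the invariance in (d). I would then verify that the family of all such $h$, together with $D_{0}$ (a leaf of both foliations since $\overline\nu(\cdot,0)\equiv 0\equiv\Gamma(\overline\nu)(\cdot,0)$), satisfies (a)--(c) of Definition~\ref{F1}; the decisive point is uniqueness of the leaf through a given point, inherited from that property of $\overline{\mathcal F}_{D}$ together with the uniqueness clause of the implicit function theorem. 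Finally, for (c): by (d) the completely integrable members of $\mathcal A_{L}$ form a $\Gamma$-invariant subset containing $\overline\nu_{0}\equiv 0$ (whose foliation is by the horizontal slices of $D$); showing it is closed in the supremum norm --- equivalently, that the uniformly $L$-Lipschitz leaves of the iterates $\Gamma^{j}(\overline\nu_{0})$ subconverge to integral surfaces of $\nu^{*}=\lim_{j}\Gamma^{j}(\overline\nu_{0})$ that still foliate $D$ --- places $\nu^{*}$ in this subset, so $\nu^{*}$ is completely integrable.

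The main obstacle, to my mind, is the package of geometric checks in (d) and (c): verifying that the pulled-back graphs really assemble into a foliation --- open, connected domains and exactly one leaf through each point --- which forces one to patch the local implicit-function data across overlaps and to control the leaves near the singular set $D_{0}$ where $T$ is not even defined; and then, for (c), securing that uniqueness of leaves survives the passage to the limit $\Gamma^{j}(\overline\nu_{0})\to\nu^{*}$. By comparison the contraction estimate in (b) is routine in principle but fiddly, since the $|y|^{\alpha}$-cusp structure of $T$ near $D_{0}$ imposed by $(L_{1})$ must be accommodated while keeping the constant below $1$.
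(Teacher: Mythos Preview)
The paper does not actually prove this proposition: it defers entirely to \cite[Lemma~1]{Shashkov} and \cite[Proposition~3.17]{vidarte}, and the remark that follows only records the specific choice of $L$ as the smaller root of your quadratic $q$ (the sign in the paper's display~\eqref{L} appears to be a typo). Your outline is exactly the standard route of those references --- the quadratic inequality from $(L_{2})$ for (a), a direct Lipschitz estimate for (b), and the implicit-function pull-back of leaves of $\overline{\mathcal F}_{D}$ for (d) and thence (c) --- so there is no methodological difference to report.

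There is, however, a genuine quantitative slip in your step (b). Your displayed Lipschitz constant
\[
\frac{(\|A\|_{D}+\|B\|_{D}\|C\|_{D})(1+2L\|B\|_{D})}{(1-L\|B\|_{D})^{2}}
\]
is \emph{not} always below $1$ under $(L_{2})$: take $\|A\|_{D}=0$ and $\|B\|_{D}\|C\|_{D}=0.24$, so that $(L_{2})$ holds, $L\|B\|_{D}=0.4$ at the smaller root, and your constant equals $0.24\cdot 1.8/0.36=1.2>1$. The extra factor $(1+2L\|B\|_{D})$ should not be there. Writing $u_{i}=\overline\nu_{i}\circ T$ and using $\Gamma(\overline\nu_{2})=(u_{2}A-C)/(1-u_{2}B)$ one has the identity
\[
\Gamma(\overline\nu_{1})-\Gamma(\overline\nu_{2})=\frac{(u_{1}-u_{2})A}{1-u_{1}B}+\frac{\bigl((u_{1}-u_{2})B\bigr)\,\Gamma(\overline\nu_{2})}{1-u_{1}B},
\]
which together with $\|\Gamma(\overline\nu_{2})\|_{D}\le L$ from (a) gives the constant $\dfrac{\|A\|_{D}+L\|B\|_{D}}{1-L\|B\|_{D}}$; equivalently, if you keep the common denominator $(1-u_{1}B)(1-u_{2}B)$ and simplify $a(1-Lb)+b(La+c)=a+bc$, you get $\dfrac{\|A\|_{D}+\|B\|_{D}\|C\|_{D}}{(1-L\|B\|_{D})^{2}}$. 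Either form is $<1$ at the smaller root of $q$: with $\Delta=\sqrt{(1-\|A\|_{D})^{2}-4\|B\|_{D}\|C\|_{D}}$ one computes $\dfrac{\|A\|_{D}+L\|B\|_{D}}{1-L\|B\|_{D}}=\dfrac{1+\|A\|_{D}-\Delta}{1+\|A\|_{D}+\Delta}$. Your sketch for (c)--(d) is the standard argument and is sound; the delicate points you flag (patching the local implicit-function data into a genuine foliation, behaviour near $D_{0}$, and persistence of leaf-uniqueness in the limit $\Gamma^{j}(\overline\nu_{0})\to\nu^{*}$) are precisely what the cited references work out in detail.
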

\begin{observacao}
It is known from ~\cite[Eq. 8 1]{Shashkov} and \cite[Eq. 3.47]{vidarte}  that $L$  can be taken as
\begin{equation}\label{L}
L=\frac{-(1-\norm{A})+\sqrt{(\norm{A}-1)^{2}-4\norm{B}\norm{C}}}{2\norm{B}}.
\end{equation}

\end{observacao}
Let us begin stating the main proposition of this article.
 \begin{prop}\label{Mp01}
 Let $L \geq 0$ be as in Proposition \ref{fgamma}. Then, the attracting fixed point $\nu^{*}$ of the operator $\Gamma$ is a function of class $C^{k}.$
\end{prop}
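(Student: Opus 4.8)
The plan is to prove Proposition~\ref{Mp01} by a bootstrap argument built on the fiber contraction theorem (Hirsch--Pugh type), which is the natural way to push to order $k$ the $C^{1}$ scheme of Shashkov--Shil'nikov and of Rychlik. Writing $\Phi:=\overline{\nu}\circ T$ and using Definition~\ref{A,B,C}, the operator reads $\Gamma(\overline{\nu})=(\Phi A-C)(1-\Phi B)^{-1}$ on $D^{*}$. I differentiate the fixed point identity $\nu=\Gamma(\nu)$ formally $j$ times, $1\le j\le k$. Each differentiation produces, when regarded as an equation in the new unknown $g_{j}$ (a candidate for $D^{j}\nu$, with the lower-order objects $g_{0}=\nu,\dots,g_{j-1}$ kept as parameters), an \emph{affine} operator $\Gamma_{j}$ whose linear part is a weighted composition operator of the shape $g_{j}\mapsto(g_{j}\circ T)\cdot(\text{coefficient built from }DT)\cdot(1-\Phi B)^{-2}$. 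I then introduce the complete metric space $\mathcal{A}^{(j)}_{L}$ of continuous functions on $D$, valued in the symmetric $j$-linear maps $(\R^{n+1})^{j}\to\R^{n}$, bounded by a suitable constant $L_{j}$ and carrying the compatibility condition at $D_{0}$ inherited from $\nu(x,0)=0$, and check that the combined map $\widehat{\Gamma}_{k}(g_{0},\dots,g_{k})=\bigl(\Gamma(g_{0}),\Gamma_{1}(g_{0},g_{1}),\dots,\Gamma_{k}(g_{0},\dots,g_{k})\bigr)$ carries $\mathcal{A}_{L}\times\mathcal{A}^{(1)}_{L}\times\cdots\times\mathcal{A}^{(k)}_{L}$ into itself and is a fiber contraction over the base contraction $\Gamma$ of Proposition~\ref{fgamma}.

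The rate of contraction in the $j$-th fiber is what the hypotheses of Assumption~\ref{Assumptions} are calibrated for, and it is computed by a Fa{\`a} di Bruno expansion. The only term of top order in $g_{j}$ comes from the leading chain-rule term $(D^{j}\overline{\nu})\circ T\cdot(DT)^{\otimes j}$ of $D^{j}\Phi$; writing $DT=\partial_{y}G\cdot\bigl(\begin{smallmatrix}A&B\\ C&1\end{smallmatrix}\bigr)$, a careful accounting bounds its contribution by $\|\partial_{y}G\|_{D}^{\,j}\max_{m+n=j}(\|A\|_{D}+\|B\|_{D})^{m}(\|C\|_{D}+1)^{n}$. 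Differentiating the rational map $(\Phi A-C)(1-\Phi B)^{-1}$ with respect to $\Phi$ contributes the factor $(\|A\|_{D}+\|C\|_{D}\|B\|_{D})$ over $(1-\Phi B)^{2}$, whose supremum over $\|\overline{\nu}\|\le L$ is controlled through $L\|B\|_{D}<1$ and, via \eqref{L}, through the square root $\sqrt{(1-\|A\|_{D})^{2}-4\|B\|_{D}\|C\|_{D}}$ that appears in the denominators of $(L_{3})$; and the number of chain-rule terms of the relevant types is bounded by the combinatorial constant $(2j!)^{2}$. Multiplying these three contributions shows the $j$-th fiber rate is a constant multiple of the left-hand side of the matching inequality, so that $(L_{2})$ makes $\Gamma$ a $C^{0}$-contraction (as in Proposition~\ref{fgamma}), $(L_{3})(a)$ makes $\Gamma_{1}$ a contraction, and $(L_{3})(b)$ makes $\Gamma_{j}$ a contraction for $2\le j\le k$, the auxiliary normalization $\|\partial_{y}G\|_{D}\ge\tfrac14$ or $\|\partial_{x}F\|_{D}\ge\tfrac14$ being used to absorb the leftover constants. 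The affine part of $\Gamma_{j}$ gathers all the lower-order chain-rule terms, involving $g_{0},\dots,g_{j-1}$ and the derivatives of $A,B,C$ up to order $j\le k$, which are continuous and bounded on $D^{*}$ by the uniform-in-$x$ bounds in $(L_{1})$; hence they form a legitimate continuous affine term, and they are what forces the radii $L_{j}$ to be taken large enough, which is possible precisely because the rates are $<1$.

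The delicate point is the singular hyperplane $D_{0}$: the map $T$ itself need not be $C^{k+1}$ there (because of the $|y|^{\alpha}$ terms), so neither the self-map property nor the chain-rule bookkeeping is automatic near $D_{0}$. This is where the special form in $(L_{1})$ does its work. Since $G(x,y)$ is close to $y^{*}_{\pm}\neq 0$ for $(x,y)$ near $D_{0}$, the composition $\overline{\nu}\circ T$ is always evaluated away from the singularity; and the weighted estimates $\|\partial_{x}^{l}\partial_{y}^{m}\varphi_{\pm}\|,\|\partial_{x}^{l}\partial_{y}^{m}\psi_{\pm}\|\le K|y|^{\gamma-m}$ with $\gamma>k-1$ force $A,C=O(|y|^{1+\gamma})$ near $D_{0}$, while $1-\Phi B$ stays bounded away from $0$ (as $L\|B\|_{D}<1$), so that $\Gamma(\overline{\nu})=O(|y|^{1+\gamma})$ there; since $1+\gamma>k$, the same structure persists after differentiating up to order $k$, all such derivatives tending to $0$ at $D_{0}$. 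Consequently every $\Gamma(\overline{\nu})$ extends to a function of class $C^{k}$ on $D$ with vanishing $k$-jet on $D_{0}$, so $\Gamma$ and all the $\Gamma_{j}$ genuinely act on the spaces $\mathcal{A}^{(j)}_{L}$ of $C^{j}$-up-to-$D_{0}$ functions. This is exactly the step forcing the exponent condition $\gamma>k-1$, and it is where most of the technical work lies.

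To close, start the iteration from $\nu_{0}\equiv 0$, which is smooth and lies in $\mathcal{A}_{L}$. By the chain rule the $\widehat{\Gamma}_{k}$-orbit of $(0,\dots,0)$ is the consistent tuple $(\Gamma^{n}\nu_{0},D\Gamma^{n}\nu_{0},\dots,D^{k}\Gamma^{n}\nu_{0})$. Since $\Gamma$ is a contraction on $\mathcal{A}_{L}$ and each $\Gamma_{j}$ is a fiber contraction over it, the fiber contraction theorem gives $\widehat{\Gamma}_{k}$ a globally attracting fixed point, which by uniqueness must be $(\nu^{*},g^{*}_{1},\dots,g^{*}_{k})$; hence $D^{j}(\Gamma^{n}\nu_{0})\to g^{*}_{j}$ uniformly on $D$ for every $j\le k$. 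The elementary fact that uniform convergence of functions together with uniform convergence of their first derivatives forces the limit to be $C^{1}$ with the limiting derivative, applied inductively, then gives $\nu^{*}\in C^{k}(D)$ with $D^{j}\nu^{*}=g^{*}_{j}$, which is Proposition~\ref{Mp01}. I expect the main obstacle to be twofold and concentrated in the two middle paragraphs: making the Fa{\`a} di Bruno estimates tight enough that $(L_{2})$--$(L_{3})$ are genuinely what is needed, and controlling the behaviour at the singular set $D_{0}$ so that the formally differentiated operators preserve the spaces $\mathcal{A}^{(j)}_{L}$ and the difference quotients converge uniformly up to $D_{0}$.
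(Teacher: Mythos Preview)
Your approach is essentially the paper's: split the statement into Propositions~\ref{Di1} and~\ref{propstep2}, build the tower of affine fiber maps $\Psi^{i}$ over $\Gamma$ by Fa\`a di Bruno, control the singular hyperplane $D_{0}$ via $(L_{1})$, and close with the Fiber Contraction Theorem plus the differentiation-under-uniform-limits lemma. The paper's spaces $\mathcal{D}_{i}$ carry no a~priori bound $L_{j}$ (only continuity and vanishing on $D_{0}$), which is slightly cleaner than your $\mathcal{A}^{(j)}_{L}$ but inessential.

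The one step you underdescribe is the fiber contraction estimate. The bound
\[
|M^{i}(b)|\ \le\ \max_{m+n=i}(\|A\|_{D}+\|B\|_{D})^{m}(\|C\|_{D}+1)^{n}\,|b|,\qquad M^{i}(b):=b(\widehat{DT}\,\cdot,\dots,\widehat{DT}\,\cdot),
\]
does \emph{not} hold in the original operator norm on $L^{i}(\R^{n+1},\R^{n})$; the paper obtains it (Lemma~\ref{|Mib|}) by decomposing $L^{i}(\R^{n+1},\R^{n})$ over the $2^{i}$ block-types in $\R^{n}\oplus\R$, forming the $2^{i}\times 2^{i}$ positive coupling matrix $\Delta=[c_{g,f}]$, and invoking Perron--Frobenius to build an equivalent norm $|\cdot|_{i}$ in which the contraction rate equals the spectral radius of $\Delta$, which in turn is bounded by the maximal row sum---precisely the quantity you quote (Claim~\ref{CScgf}). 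This renorming is what makes $(L_{3})$ the exact hypothesis and is the technical lemma hiding behind your phrase ``careful accounting''. A minor correction on the $D_{0}$ side: your claim that ``$\overline{\nu}\circ T$ is evaluated away from the singularity'' assumes $y^{*}_{\pm}\neq 0$, which is not part of $(L_{1})$; the paper instead uses that $\overline{\nu}\in C^{k}(D)$ on the compact $D$, so $\|D^{j}\overline{\nu}\circ T\|$ is bounded outright, and the vanishing at $D_{0}$ comes from the $|y|^{\alpha+\gamma-k+1}$ factors supplied by $A,B,C$ and $DT$ (Corollaries~\ref{DAC}--\ref{NC1}, Lemma~\ref{CU123}).
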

The proof of this proposition will be given with the following propositions which will be proven  in the next sections.
\begin{prop}\label{Di1}
If $\mu \in \mathcal{A}_{L}$ is a $C^{k}$ function. Then, the following statements hold:
\begin{itemize}
\item[(a)]$ \lim_{(a,b)\to (x,0)}D^{i}(\Gamma(\mu))(a,b)=0,$ for all $1\leq i \leq k$ and $(x,0)\in D_{0}.$
\item[(b)] The function $\Gamma(\mu) \in \mathcal{A}_{L}$ is of class $C^{k}$ and $D^{i}\Gamma(\mu)(x,0)=0,$ for all  $1\leq i \leq k$ and  $(x,0)\in D_{0}.$
\end{itemize}
\end{prop}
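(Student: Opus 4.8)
The plan is to exploit the special form $(L_1)$ of $F$ and $G$ near $D_0$ to control the derivatives of $\Gamma(\mu)$ as $y\to 0$. First I would write, for $y\neq 0$,
\[
\Gamma(\mu)(x,y)=\frac{\mu\circ T(x,y)\,A(x,y)-C(x,y)}{1-\mu\circ T(x,y)\,B(x,y)},
\]
and observe that by $(L_1)$ the matrices $A,B,C$ and the composition $\mu\circ T$ are all built from the functions $x^{\ast}_{\pm}+|y|^{\alpha}[\,\cdot\,+\varphi_{\pm}]$, $y^{\ast}_{\pm}+|y|^{\alpha}[\,\cdot\,+\psi_{\pm}]$, whose derivatives satisfy the quantitative bounds $\|\partial^{l+m}_{x^{l}y^{m}}\varphi_{\pm}\|\le K|y|^{\gamma-m}$ (and likewise for $\psi_{\pm}$) with $\gamma>k-1$. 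From these I would deduce, by the chain rule and the Faà di Bruno formula, pointwise estimates of the shape $\|D^{i}(\mu\circ T)(x,y)\|\le \mathrm{const}\cdot|y|^{\beta}$ and similarly for $D^{i}A$, $D^{i}B$, $D^{i}C$, for some exponent $\beta=\beta(i,\alpha,\gamma)$ that stays positive for $1\le i\le k$ precisely because $\gamma>k-1$. (The term $|y|^{\alpha}$ differentiated $m\le k$ times in $y$ contributes $|y|^{\alpha-m}$, and the hypotheses are calibrated so that the worst exponent appearing at order $\le k$ is still $>-\alpha$ or can be absorbed; this bookkeeping is what $(L_1)$ is designed for.)

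Next, part (a): since the denominator $1-\mu\circ T\cdot B$ is bounded away from zero on $D^{*}$ (this is exactly the content of $(L_2)$, which guarantees $\|\mu\|\,\|B\|<1$ for $\mu\in\mathcal A_L$, $L$ as in \eqref{L}), the quotient rule expresses $D^{i}\Gamma(\mu)$ as a finite sum of products of the $D^{j}$ of numerator and denominator divided by a power of the (bounded-below) denominator. Each such term carries at least one factor with a positive power of $|y|$ coming from the estimates above, hence $D^{i}\Gamma(\mu)(a,b)\to 0$ as $(a,b)\to(x,0)$ for $1\le i\le k$. The two-sided limit works because the $y>0$ and $y<0$ branches are treated identically, and $(L_1)$ forces the relevant quantities to vanish at the same rate from both sides.

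Finally, part (b): away from $D_0$, $\Gamma(\mu)$ is $C^{k}$ as a composition and quotient of $C^{k+1}$ data with nonvanishing denominator. On $D_0$, $\Gamma(\mu)(x,0)=0$ by definition; I would show $\Gamma(\mu)$ is $C^{k}$ across $D_0$ by an inductive argument: knowing $D^{i}\Gamma(\mu)\to 0$ at $D_0$ for all $i\le k$ (part (a)) lets one check that the candidate derivative $D^{i}\Gamma(\mu)(x,0):=0$ is indeed the derivative — the difference quotient for $D^{i-1}\Gamma(\mu)$ tends to $0$ by the mean value theorem applied along segments, using the $|y|^{\beta}$ bound on $D^{i}\Gamma(\mu)$ with $\beta>0$. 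Iterating from $i=1$ up to $i=k$ gives that $\Gamma(\mu)\in C^{k}$ with all derivatives of order $1,\dots,k$ vanishing on $D_0$, and continuity of $\Gamma(\mu)$ together with $\Gamma(\mu)(x,0)=0$ was already noted in Proposition \ref{fgamma}, so $\Gamma(\mu)\in\mathcal A_L$. The main obstacle I anticipate is the combinatorial chain-rule estimate in the first step: tracking, through Faà di Bruno, how the exponents $\alpha$ and $\gamma$ combine under composition and division so that every term of order $\le k$ genuinely carries a strictly positive power of $|y|$ — this is where the hypothesis $\gamma>k-1$ must be used sharply, and it is the technical heart of the proof.
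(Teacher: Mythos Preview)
Your plan is essentially the paper's own argument: it expands $D^{k}\Gamma(\mu)$ explicitly via Leibniz and the chain rule (Lemma~\ref{MC}, splitting into pieces $U_{1}^{k},U_{2}^{k},U_{3}^{k}$), proves the building-block estimates $\|D^{i}A\|,\|D^{i}C\|\le C\rho|y|^{\gamma-i+1}$, $\|D^{i}B\|\le C\rho|y|^{\gamma-i}$, $\|D^{i}T\|\le C|y|^{\alpha-i}$ from $(L_{1})$ (Corollaries~\ref{DAC}--\ref{NC1}), and then combines them to get a bound of order $|y|^{\alpha+\gamma-k+1}$, which is positive since $\gamma>k-1$ and $\alpha>0$ (Lemma~\ref{CU123}); part~(b) then follows exactly as you indicate.

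One correction to your first paragraph: the individual factors $D^{i}(\mu\circ T)$ and $D^{i}B$ do \emph{not} carry a positive power of $|y|$ in general (indeed $\|D^{j}T\|\sim|y|^{\alpha-j}$ blows up as $y\to 0$, and $B$ does not even vanish on $D_{0}$). What makes every term of the expanded $D^{k}\Gamma(\mu)$ tend to zero is that each such term also contains a factor coming from $A$, $C$, or one of their derivatives, and it is the \emph{total} exponent of the product that is $\ge \alpha+\gamma-k+1>0$. Your second paragraph implicitly uses this, but the phrase ``at least one factor with a positive power'' is not quite the right bookkeeping; you need the sum of exponents, and that is precisely the combinatorics the paper carries out.
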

\begin{prop}\label{propstep2}
If $\overline{\nu} \in \mathcal{A}_{L}$ is a $C^{k}$ function and  $D^{i}\overline{\nu}(x,0)=0,$ for all $0\leq i \leq k$ and $(x,0)\in D_{0}.$ Then, the following limit  exists
 \begin{equation*}\label{ME:3}
 \displaystyle \lim_{n\to \infty}(\Gamma^{n}(\overline{\nu}),D(\Gamma^{n}(\overline{\nu})),\ldots,D^{k}(\Gamma^{n}(\overline{\nu})))=(\nu^{*}, A_{1},A_{2},\ldots,A_{k}),
 \end{equation*}
 where $A_{1},A_{2},\ldots,A_{k}$ are continuous functions.
\end{prop}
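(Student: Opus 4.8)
The plan is to show that the sequence of $k$-jets $(\Gamma^n(\overline\nu), D\Gamma^n(\overline\nu),\dots,D^k\Gamma^n(\overline\nu))$ is Cauchy in the space of continuous functions $D\to \R^n\times (\R^{n+1}\otimes\R^n)\times\cdots$ equipped with the sup norm, so that it converges to a limit $(\nu^*, A_1,\dots,A_k)$ whose first coordinate is necessarily $\nu^*$ by Proposition \ref{fgamma}(b)–(c). The key is to set up an operator on jets, $\widehat\Gamma$, whose action on a $C^k$ function $\mu$ records $(\Gamma\mu, D\Gamma\mu,\dots,D^k\Gamma\mu)$; by the chain and quotient rules applied to $\Gamma(\overline\nu)=(\overline\nu\circ T\,A-C)/(1-\overline\nu\circ T\,B)$, the $i$-th derivative $D^i\Gamma(\mu)$ depends on $\mu\circ T$ and its derivatives $D^j(\mu\circ T)$ for $j\le i$, hence on $D^j\mu$ for $j\le i$ composed with $T$, together with derivatives of $A,B,C$ up to order $i$ (which are controlled by hypothesis $(L_1)$, since $A,B,C$ are built from $F,G$, which are $C^{k+1}$ away from $D_0$ and satisfy the $|y|^{\gamma-m}$ estimates near $D_0$ with $\gamma>k-1$). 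First I would write $\widehat\Gamma$ in ``triangular'' form: the top component is $\Gamma\mu$ itself (a contraction by Proposition \ref{fgamma}(b)), and for each $i\ge1$,
\begin{equation*}
D^i\Gamma(\mu) = \Lambda_i\bigl(D^i\mu\circ T\bigr) + R_i\bigl(\mu,D\mu,\dots,D^{i-1}\mu\bigr),
\end{equation*}
where $\Lambda_i$ is a bounded linear operator (essentially multiplication by a matrix built from $A$, $B$, $(1-\mu\circ T\,B)^{-1}$ and the Jacobian of $T$) whose operator norm I would estimate by the quantity appearing in condition $(L_3)$, and $R_i$ is a ``remainder'' depending only on lower-order jets. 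Condition $(L_3)$(a) for $i=1$ and $(L_3)$(b) for $2\le i\le k$ is exactly what is needed to guarantee $\|\Lambda_i\|<1$ on the relevant domain (the denominators $(1+\|A\|+\sqrt{(1-\|A\|)^2-4\|B\|\|C\|})^2$ and the factor $(\|\partial_y G\|_D)^{-i}$ match the contraction estimate one gets after differentiating $i$ times and using $\|\nu^*\|\le L$).

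The argument then proceeds by induction on $i$ from $0$ to $k$. The base case $i=0$ is Proposition \ref{fgamma}: $\Gamma^n(\overline\nu)\to\nu^*$ uniformly because $\Gamma$ is a contraction. Assume inductively that $D^j\Gamma^n(\overline\nu)\to A_j$ uniformly for all $j<i$, with each $A_j$ continuous; in particular the lower jets are uniformly bounded, so the remainder terms $R_i$ evaluated along the orbit form a uniformly convergent (hence uniformly Cauchy) sequence. Writing $u_n := D^i\Gamma^n(\overline\nu)$, the recursion above gives $u_{n+1} = \Lambda_i^{(n)}(u_n\circ T) + r_n$ where $\Lambda_i^{(n)}$ is the operator built from $\Gamma^n(\overline\nu)$ and $r_n\to r_\infty$ uniformly. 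Since $\|\Lambda_i^{(n)}\|\le\lambda<1$ uniformly in $n$ (by $(L_3)$, using that $\Gamma^n(\overline\nu)\in\mathcal A_L$ for all $n$), a standard contraction-with-converging-data estimate shows $\|u_{n+1}-u_n\|_D \le \lambda\|u_n-u_{n-1}\|_D + \|r_n-r_{n-1}\|_D + (\text{error from }\Lambda_i^{(n)}-\Lambda_i^{(n-1)})$; summing the geometric series and using that the data converges gives that $(u_n)$ is uniformly Cauchy, hence converges to a continuous $A_i$. Here the hypothesis $D^i\overline\nu(x,0)=0$ for $0\le i\le k$, together with Proposition \ref{Di1}(a), is used to ensure the jets stay well-behaved near $D_0$ (where $A,B,C$ blow up but are tamed by the $|y|^{\gamma-m}$ bounds with $\gamma>k-1$): this is what makes $\Lambda_i^{(n)}(u_n\circ T)$ a genuine self-map estimate rather than something that degenerates at the singular fiber.

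The main obstacle I anticipate is twofold. First, the bookkeeping for $R_i$ and $\Lambda_i$: applying Faà di Bruno / the Leibniz and quotient rules to the rational expression for $\Gamma(\mu)$ produces a sum of many terms, and one must verify that, after collecting, every term of highest differential order $i$ really does factor through $D^i\mu\circ T$ with the claimed multiplier, while all other terms involve only $D^{\le i-1}\mu$ — and then check that the multiplier's norm is dominated by the left-hand side of $(L_3)$. The combinatorial factors $(2!)^2$ and $(2k!)^2$ in $(L_3)$ strongly suggest this is exactly where they enter. Second, and more delicate, is the behavior near $D_0$: the naive bounds on derivatives of $A,B,C$ diverge like $|y|^{\gamma-i}$, so to get a \emph{uniform} (over all of $D$, including arbitrarily close to $D_0$) contraction constant one must combine the $\gamma>k-1$ gain with the vanishing of the jets of $\Gamma^n(\overline\nu)$ on $D_0$ established in Proposition \ref{Di1}; quantifying this interplay — essentially showing $D^i(\mu\circ T)$ acquires enough extra powers of $|y|$ to cancel the blow-up — will be the technical heart of the proof and the place where condition $(L_1)$ is used in full strength.
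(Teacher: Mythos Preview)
Your strategy coincides with the paper's: the triangular jet map $\widetilde N_i=(\Gamma,\Psi^1,\dots,\Psi^i)$ and induction on $i$ are exactly what they do, packaging your direct Cauchy-with-converging-data argument as an application of the Fiber Contraction Theorem. You have also correctly located the two pressure points.

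On your first anticipated obstacle, the paper resolves the norm estimate for $\Lambda_i$ by \emph{renorming} $L^i(\R^{n+1},\R^n)$, and this is the one genuine ingredient your sketch is missing. Writing $DT=\partial_yG\cdot\widehat{DT}$ with $\widehat{DT}$ the block matrix $\left(\begin{smallmatrix}A&B\\C&1\end{smallmatrix}\right)$, the relevant linear action on $i$-forms is $b\mapsto b(\widehat{DT}\,\cdot,\dots,\widehat{DT}\,\cdot)$. The paper splits $\R^{n+1}=\R^n\oplus\R$, indexes the $2^i$ components $b_f$ by maps $f\colon[i]\to\{E,F\}$, forms the $2^i\times 2^i$ positive matrix $\Delta=[c_{g,f}]$ whose entries are products of block norms $\|\pi_{g(j)}\widehat{DT}\pi_{f(j)}\|$, and weights the components by the Perron--Frobenius eigenvector of $\Delta$. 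In this equivalent norm the multiplier is bounded by the Perron root, hence by $\max_g\sum_fc_{g,f}$, which a short combinatorial count then identifies with the $\max_{m+n=i}\{(\|A\|_D+\|B\|_D)^m(\|C\|_D+1)^n\}$ appearing in $(L_3)$. This is precisely the content hidden in your sentence ``whose operator norm I would estimate by the quantity appearing in condition $(L_3)$''; without some such renorming the naive operator-norm bound need not match the $(L_3)$ threshold.

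On your second anticipated obstacle: it is milder than you fear. The top-order part $\Lambda_i$ involves only $DT$ (equivalently $\partial_yG$ times the bounded $\widehat{DT}$), not the higher derivatives $D^jA,D^jB,D^jC$ that blow up like $|y|^{\gamma-j}$; those appear only in the lower-order remainder $R_i$. Once the Proposition~\ref{Di1}-type estimates show that each $\Psi^i$ lands in the space $\mathcal D_i$ of continuous maps vanishing on $D_0$, the fiber-contraction inequality is a pointwise bound uniform over $D^*$ and requires no further analysis near the singular fiber.
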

\begin{proof}[\textbf{Proof of Proposition \ref{Mp01}}]
Let $\nu \in \mathcal{A}_{L}$ be a $C^{k}$ function. By Proposition \ref{Mp01} we have that $\overline{\nu}:=\Gamma(\nu)$ is a $C^{k}$ function and that $D^{k}\overline{\nu}(x,0)=0,$ From Proposition \ref{propstep2}, we have that
 \begin{equation*}\label{ME:3}
 \displaystyle \lim_{n\to \infty}(\Gamma^{n}(\overline{\nu}),D(\Gamma^{n}(\overline{\nu})),\ldots,D^{k}(\Gamma^{n}(\overline{\nu})))=(\nu^{*}, A_{1},A_{2},\ldots,A_{k}).
 \end{equation*}
Hence, and by using interchanging the order of differentiation and limit, see \cite[Theorem 8.6.3]{Dieudonne}, we obtain
$D^{j}(\displaystyle \lim_{n\to \infty}\Gamma^{n}(\overline{\nu}))=A_{j},$
for $0\leq j \leq k.$ Thus, since $\nu^{*}$ is a global attracting fixed of $\Gamma,$ it follows that $D^{j}(\nu^{*})=A_{j},$
for $0\leq j \leq k.$
Therefore, since $A_{j}$ is a continuous function, it follows that the function $\nu^{*}$ is of class $C^{k},$ which concludes the proof of our main proposition.

\end{proof}
Now we are ready to prove our main Theorem \ref{MT}.
\begin{proof}[\textbf{Proof of Theorem \ref{MT}}]
By Proposition \ref{fgamma}(c) we have that the attracting fixed point $\nu^{*}$ of the operator $\Gamma$ is integrable and by Prop \ref{Mp01} we get that   $\nu^{*}$ is of class $C^{k}$. Thus, the function $\nu^{*}$ defines a foliation $\mathcal{F}_{D}$ of class $C^{k}$ and by Proposition \ref{fgamma}(d) it follows that the foliation $\mathcal{F}_{D}$ is $T$-invariant, which finishes the proof the of our  main result.
\end{proof}
\section{Proof of Proposition \ref{Di1} }\label{PT}

The proof is somewhat lengthy, so we divide it into two parts. \textbf{In the first part:} we will establish a formula for the \emph{$kth$  order derivatives} of the function $\Gamma(\nu)$ at the points $(x,y),$ where the $y$-component stay away from zero . \textbf{In the second part:} we \emph{estimate the norms of the $ith$ derivatives} of the functions $A(x,y),$ $B(x,y)$ and $C(x,y),$ at the points $(x,y)$ around of a neighborhood of $D_{0}.$


\subsection {Part 1: Formula for Derivatives } Before that, we introduce some definitions which will be useful in order to find suitable formulas. From now on, $L(E_{1},\ldots,E_{k};G)$ denotes the space of continuous $k$-multilinear maps of  $E_{1},\ldots,E_{r}$ to $G.$ If $E_{i}=E,i \leq k,$ this space is denoted $L^{k}(E,F).$ Moreover, $L_{s}^{k}(E; F)$ denotes the subspace of symmetric elements of $L^{k}(E,F).$

\begin{definicao}[Symmetrizing operator] The Symmetrizing operator $Sym^{k}$ is defined by
$$
\begin{array}{cccc}
Sym^{k} \ : & \! L^{k}(E; F) & \! \longrightarrow
& \! L^{k}(E; F) \\
& \! A & \! \longmapsto
&\displaystyle \! Sym^{k}(A)=\frac{1}{k!}\sum_{\sigma \in S_{k}}\sigma A,
\end{array}
$$
where $(\sigma A)(e_{1},\ldots, e_{k})=A(e_{\sigma(1)},\ldots, e_{\sigma(k)})$ and $S_{k}$ is the group of permutations on $k$ elements.
\end{definicao}
\begin{observacao}\label{RP}
The symmetrizing operator $Sym^{k}$ satisfies the following properties:
\begin{itemize}
\item[(a)] $Sym^{k}(L^{k}(E; F))=L_{s}^{k}(E; F),
$
\item[(b)]$(Sym^{k})^{2}=Sym^{k},$
\item[(c)]$\parallel Sym^{k} \parallel\leq1.$
\end{itemize}
\end{observacao}

\begin{definicao}\label{DCP0}
Assume $\mathcal{B} \in L(F_{1}\times F_{2}; G),$ we define the bilinear map.
$$
\begin{array}{ccccc}
  \phi^{(i,(k-i))} & :&L^{i}(E;F_{1})\times L^{(k-i)}(E;F_{2}) & \longrightarrow & L^{k}(E;G) \\
   &  &   (A_{1},A_{2})&\longmapsto & [\phi^{(i,(k-i))}(A_{1},A_{2})]
\end{array}
$$
 by
\begin{equation}
[\phi^{(i,(k-i))}(A_{1},A_{2})](e_{1},\ldots,e_{k})=\mathcal{B}(A_{1}(e_{1},\ldots e_{i}),A_{2}(e_{i+1},\ldots,e_{k})).
\end{equation}
\end{definicao}
\begin{definicao}\label{DCP1}
Let $\overline{\nu}_{i}:U \to L^{i}(E;F_{1})$ and $\overline{\nu}_{(k-i)}:U \to L^{(k-i)}(E;F_{2}),$ we define
\begin{equation}\label{P1}
\begin{array}{ccccc}
  \phi^{(i,(k-i))}(\overline{\nu}_{i},\overline{\nu}_{(k-i)}) & : & U & \mapsto & L^{k}(E;G) \\
   &  & p & \to & \phi^{(i,(k-i))}(\overline{\nu}_{i}(p),\overline{\nu}_{(k-i)}(p)).
\end{array}
\end{equation}
\end{definicao}
\begin{definicao}\label{MM} For every tuple $(q,r_{1},r_{2},\ldots,r_{q}),$ where $q>1,$ and $r_{1}+\ldots+r_{q}=k,$ we define the following continuous multilinear map
    \begin{equation}
    \begin{array}{cccll}
          \phi^{(q,r_{1},\ldots,r_{q})} & :  L^{q}(F;G)\times L^{r_{1}}(E;F)\times \ldots \times L^{r_{q}}(E;F)  \longrightarrow  L^{k}(E;G) \\
           &   (\overline{v}_{q},\overline{v}_{r_{1}},\ldots,\overline{v}_{r_{q}}) \longmapsto  \phi^{(q,r_{1},\ldots,r_{q})}(\overline{v}_{q},\overline{v}_{r_{1}},\ldots,\overline{v}_{r_{q}}),
        \end{array}
        \end{equation}
        where $$ \phi^{(q,r_{1},\ldots,r_{q})}(\overline{v}_{q},\overline{v}_{r_{1}},\ldots,\overline{v}_{r_{q}}):\underbrace{E\times \ldots \times E}_{\mbox{k-times}}\to G$$ is defined as \\
     \begin{multline}
     \phi^{(q,r_{1},\ldots,r_{q})}(\overline{v}_{q},\overline{v}_{r_{1}},\ldots,\overline{v}_{r_{q}})(e_{1},\ldots,e_{k})\\=\overline{v}_{q}(\overline{v}_{r_{1}}(e_{1},\ldots e_{j_{r_{1}}}),\ldots, \overline{v}_{r_{q}}(e_{(j_{r_{1}}+j_{r_{2}}+\ldots+j_{r_{(q-1)}}+1)},\ldots,e_{(j_{r_{1}}+j_{r_{2}}+\ldots+j_{r_{q}})}).
     \end{multline}
\end{definicao}
 \begin{definicao}\label{MM1}
 Let $U \subset E$ such that $\overline{\nu}_{r_{i}}:U \to L^{r_{i}}(E;F),1\leq i \leq q$ and $f:V\subset U \to U$ are functions. Then, define
    \begin{equation}
    \phi^{(q,r_{1},\ldots,r_{q})}* ((\overline{\nu}_{q}\circ f)  \times \overline{\nu}_{r_{1}}\times\ldots\times \overline{\nu}_{r_{q}})  : U \rightarrow  L^{k}(E;G)
    \end{equation}
     by  $$u  \to  \phi^{(q,r_{1},\ldots,r_{q})}((\overline{\nu}_{q}\circ f(u)),\overline{\nu}_{r_{1}}(u),\ldots,\overline{\nu}_{r_{q}}(u)),$$
    where $\phi^{(q,r_{1},\ldots,r_{q})}((\overline{\nu}_{q}\circ f(u)),\overline{\nu}_{r_{1}}(u),\ldots,\overline{\nu}_{r_{q}}(u))$ as in Definition \ref{MM}.
 \end{definicao}
Next, we define generalizations of the $kth$ derivative of the composition of two functions.
 \begin{definicao}\label{DC}
 Let $k_{1}\geq k_{3}\geq k_{2} \geq 1$ be integers such that  we have the functions $\overline{v}_{q}:V \subset F\to L^{q}(F,G),$ for $ k_{2}\leq q \leq k_{3}$ and that $f:U \to V$ is a function of class $C^{k_{1}-k_{2}+1},$ where $D^{i}f:U \to L^{i}(E,F), 0\leq i \leq k_{1}-k_{2}+1$ are the derivatives of $f.$ Then, we define the function
 $$\mathcal{DC}^{(k_{1},k_{2},k_{3})}((\overline{v}_{k_{2}},\ldots,\overline{v}_{k_{3}}),f):U \to L^{k_{1}}(E,F)$$
 given by
 \begin{multline}
\mathcal{DC}^{(k_{1},k_{2},k_{3})}((\overline{v}_{k_{2}},\ldots,\overline{v}_{k_{3}}),f)(p)\\ :=\displaystyle  Sym^{k_{1}} \left(\displaystyle\sum_{n=k_{2}}^{k_{3}}\sum_{{r_{1}+\ldots+r_{n}=k_{1}}} \frac{k_{1}!}{r_{1}!\ldots r_{n}!} \phi^{(n,r_{1},\ldots,r_{n})}*((\overline{v}_{n}\circ f)  \times    D^{r_{1}}f\times\ldots\times D^{r_{n}}f)\right)(p),
\end{multline}
where $ \phi^{(n,r_{1},\ldots,r_{n})}*((\overline{v}_{n}\circ f)  \times \overline{\nu}_{r_{1}}\times\ldots\times \overline{\nu}_{r_{n}})$ as in Definition \ref{MM1}, and when $k_{1}=k_{3}=0$ and $k_{2}=1,$ we define the function
$$\mathcal{DC}^{(0,1,0)}(\overline{v}_{0},f):U \to F$$
given by
\begin{equation}\label{DC1}
\mathcal{DC}^{(0,1,0)}(\overline{v}_{0},f)(p):=(\overline{v}_{0}\circ f)(p).
\end{equation}
Furthermore, if $\overline{\nu}:V\subset F \to G$ and $f:U \subset E\to V\subset F$ are $C^{k_{3}},$ then, will be used the following notation.
\begin{equation}\label{N4.2}
\mathcal{DC}^{(k_{1},k_{2},k_{3})}(\overline{\nu},f):=\mathcal{DC}^{(k_{1},k_{2},k_{3})}(D^{k_{2}}(\overline{\nu}),\ldots,D^{k_{3}}(\overline{\nu}),f).
\end{equation}
\end{definicao}
\begin{observacao}\label{DCE}
If  $\overline{\nu}:D\to \R^{1\times n}$ and $T: D^{*} \to D $ are functions $C^{k},$ then:
\begin{itemize}
\item[(i)]on account of the chain rule applied to the function $\overline{\nu} \circ T$ it is possible to show that
\begin{equation*}\label{Dk1k}
\mathcal{DC}^{(k,1,k)}(\overline{\nu},T):=D^{k}(\overline{\nu}\circ T).
\end{equation*}
Therefore, we conclude that the function in Definitions \ref{DC} is a genera\-lization of the $k$th derivative of the composite of two functions.
\item[(ii)] By Eq. \eqref{N4.2} and the symmetry of the function $D^{k}\overline{\nu}$ , we obtain
\begin{equation}\label{Dkkk}
\mathcal{DC}^{(k,k,k)}(\overline{\nu},T):=k!(D^{k}\overline{\nu}) \circ T \underbrace{DT\ldots DT}_{k-times}.
\end{equation}
\end{itemize}
\end{observacao}

 Next, we define generalizations of the $kth$ derivative of product of the map $(f\circ g)$ with $h.$
\begin{definicao}\label{DCP2}
Assume $\mathcal{B} \in L(F_{1}\times F_{2};G)$  and that $k_{1}\geq k_{3} \geq 1$; $k_{2}\geq 0$ are integers such that  $f:U \subset E \to V\subset F$ and $B:U\to F_{2}$ are  functions of class $C^{k_{3}}$ and $C^{k_{1}-k_{2}}$ respectively, where  $D^{i}B:U \to L^{i}(E,F_{2}),$ for $0\leq i \leq k_{1}-k_{2}$ are the derivatives of the function $B,$ moreover consider the functions $\overline{\nu}_{i}:V \subset F \to L^{i}(F,F_{1}),$ $0\leq i\leq k_{3}.$ Then, we define the map
 $$\displaystyle\mathcal{DCP}^{(k_{1},k_{2},k_{3})}(f,(\overline{\nu}_{0},\overline{\nu}_{1},\ldots,\overline{\nu}_{k_{3}}),B):V \to L^{k_{1}}(E;G),$$
given by
\begin{multline}
\displaystyle\mathcal{DCP}^{(k_{1},k_{2},k_{3})}(f,(\overline{\nu}_{0},\overline{\nu}_{1},\ldots,\overline{\nu}_{k_{3}}),B)(p)
:= \\ \displaystyle Sym^{k_{1}}\left(\sum_{n=k_{2}}^{k_{3}}{k_{1} \choose n}\phi^{(n,k_{1}-n)}\left(\mathcal{DC}^{(n,1,n)}\displaystyle \left((\overline{\nu}_{1},\ldots,\overline{\nu}_{n}),f\right),D^{k_{1}-n}B\right)\right)(p),\\
\end{multline}
where $\phi^{(n,k_{1}-n)}$ as in Definition \ref{DCP1}. Moreover, if $\overline{\nu}:U \subset E \to  F$ is  a function of class $C^{k_{3}},$ then we will use the notation
\begin{equation}\label{N6.3}
\displaystyle\mathcal{DCP}^{(k_{1},k_{2},k_{3})}(\overline{\nu},f,B):=\displaystyle\mathcal{DCP}^{(k_{1},k_{2},k_{3})}(f,(\overline{\nu},D(\overline{\nu}),\ldots, D^{k_{3}}(\overline{\nu})),B).
\end{equation}
\end{definicao}
\begin{observacao}\label{DCPE}
 Let $F_{1}$ and $F_{2}$ be the space of the $n$-columns and $n$-rows respectively. Then, we define the multilinear map $\mathcal{B}:F_{1}\times F_{2}\to \R$ given by $\mathcal{B}(A,B)=A\times B,$ where $A\times B$ is the usual product of matrices. Assume that  $\overline{\nu}:D \to \R^{n},$  $T:D^{*} \to D$ and $B:D^{*} \to F_{1}$ are  $C^{k}$ functions. Then, by using Leibniz and chain rule applied to the functions $\left((\overline{\nu} \circ T).B\right)$ and $(\overline{\nu} \circ T),$ respectively and in view of Eq. \eqref{Dk1k} and Definition \ref{DCP2} it is easy to show
\begin{equation}\label{DCPE1}
\displaystyle \mathcal{DCP}^{(k,0,k)}(\overline{\nu},T,B):=\displaystyle D^{k}((\nu \circ T)B).
\end{equation}
This show that the function in  Definition \ref{DCP2} generalizes the $k$th derivative of the product of the map $(\nu \circ T)$ with $B$. This fact will be useful later.
\end{observacao}
Next, we define generalizations of the $k$th derivative of the map $(1-\nu \circ T B)^{-1},$ where $B$ is a function as in Definition \ref{A,B,C} and $\nu \in \mathcal{A}_{L}$ is a function of class $C^{k}.$
\begin{definicao}\label{DCPI}
Let $(q,r_{1},\ldots,r_{q},r)$ be a tuple with $q \geq 1,$ $r_{1}+\ldots +r_{q}=k$ and $r=\max\{r_{1},\ldots, r_{q}\}$ such that   $ \overline{\nu}_{i}:V\subset F \to L_{s}^{i}(F,F_{1}),$ $0\leq i\leq r$ are functions and that $T:U \subset E\to V \subset F$ and $B:U\to F_{2} $ are functions of class $C^{k}.$ Then, we define the map
 $$\displaystyle \prod^{(r_{1},\ldots,r_{q}, r)}(\overline{\nu}_{0},\ldots,\overline{\nu}_{r_{q}},T,B):U \to L^{k}(E,G)$$
 given by
\begin{multline}
\prod^{(r_{1},\ldots,r_{q}, r)}(\overline{\nu}_{0},\ldots,\overline{\nu}_{r_{q}},T,B):
=\\  \mathcal{DCP}^{(r_{1},0,r_{1})}(T,(\overline{\nu}_{0},..,\overline{\nu}_{r_{1}}),B)\times .. \times\mathcal{DCP}^{(r_{q},0,r_{q})}((\overline{\nu}_{0},..,\overline{\nu}_{r_{q}}),T,B),
\end{multline}
where $\mathcal{DCP}^{(r_{i},0,r_{i})}(T,(\overline{\nu}_{0},..,\overline{\nu}_{r_{i}}),B)), 1\leq i \leq q$ as in Definition \ref{DCP2}.
\end{definicao}
\begin{definicao}\label{DCPI1}
 Under the notations of Definition \ref{DCPI}. Suppose that $k_{1}\geq k_{3}\geq k_{2} \geq 1$ are integers . Then, we define the map
 $$\mathcal{DICP}^{(k_{1},k_{2},k_{3})}(T,(\overline{\nu}_{0},\ldots,\overline{\nu}_{(k_{1}-k_{2})+1}),B):U \to L^{k_{1}}(E,G)$$
 given by
\begin{multline}
\mathcal{DICP}^{(k_{1},k_{2},k_{3})}((\overline{\nu}_{0},\ldots,\overline{\nu}_{(k_{1}-k_{2})+1}),T,B)
\\:=Sym^{k_{1}}\left(\sum_{q=k_{2}}^{k_{3}}\displaystyle\sum_{r_{1}+\ldots+r_{q}=k_{1}}\frac{k!(-1)^{q}q!}{r_{1}!\ldots r_{q}!}(1-\nu_{0}\circ TB)^{-(q+1)}\prod^{(r_{1},\ldots,r_{q}, r)}(\overline{\nu}_{0},\ldots,\overline{\nu}_{r_{q}},T,B)\right),
\end{multline}
where $\displaystyle \prod^{(r_{1},\ldots,r_{q}, r)}(\overline{\nu}_{0},\ldots,\overline{\nu}_{r_{q}},T,B)$ as in Definition \ref{DCPI}. Furthermore, if $\overline{v}:U \subset F \to  F_{1}$ is  a $C^{k_{1}-k_{2}+1}$ map will be used the following notation
\begin{equation}\label{DCPI2}
\displaystyle\mathcal{DICP}^{(k_{1},k_{2},k_{3})}(\overline{\nu},T,B):=\displaystyle\mathcal{DICP}^{(k_{1},k_{2},k_{3})}((\overline{\nu},D(\overline{\nu}),\ldots, D^{k_{1}-k_{2}+1}(\overline{\nu})), T,B).
\end{equation}
\end{definicao}

\begin{observacao}\label{DCPI4}
 Let $\nu \in \mathcal{A}_{L}, B, T$ be maps as in Definition \ref{DAL}, Definition \ref{A,B,C} and Definition \ref{T0} respectively such that $\nu$ and $B$ are $C^{k}$. Define $\mathcal{I}:\R-\{0\}\to \R$ by $\mathcal{I}(x)=1/x.$ Since, the function $(1-\overline{\nu} \circ TB):D^{*}\to \R$ is nonzero, it follows from chain rule applied to $\mathcal{I} \circ (1-\overline{\nu} \circ TB)$ and Definition \ref{DCPI1} that
\begin{eqnarray}\label{inv0.1}
D^{k}(1-\overline{\nu} \circ TB)^{-1}&=&Sym^{k}\circ\sum_{q=1}^{k}\displaystyle\sum_{{r_{1}+\ldots+r_{q}=k}}\frac{k!q!}{r_{1}!\ldots r_{q}!}\frac{ D^{r_{1}}(\overline{\nu} \circ TB)\ldots D^{r_{q}}(\overline{\nu} \circ TB)}{(1-\overline{\nu} \circ TB)^{(q+1)}} \nonumber\\
&:=& \mathcal{DICP}^{(k,1,k)}(\overline{\nu},T,B).
\end{eqnarray}
This show that the function in Definition \ref{DCPI1} generalizes the $k$th derivative of the map $(1-\nu \circ B)^{-1}.$ This fact will be useful later.
\end{observacao}
Now, using the last definitions we find one formula for the $kth$ derivative of the function $\Gamma(\nu)$ at the points $(x,y),$ with $y\neq0$ (see Lema \ref{MC}). This generalizes the formulas given in \cite[Eq. (11)]{Shashkov} and \cite[Eq. (42)]{Pacifico}, it is quite important to prove our main Proposition \ref{Mp01}. We start by noticing the following simple but very useful lemma.
\begin{lema}\label{DGamma}
Under Definitions \ref{A,B,C}, \ref{DAL} and $\ref{gamma}.$ Assume that $\overline{\nu} \in \mathcal{A}_{L}$ is a $C^{k}$ function. Then, for $y \neq 0$ the following formulas hold:
\begin{eqnarray}\label{k=1}
D(\Gamma(\overline{\nu}))(x,y)& =&(\overline{\nu}\circ TA-C)D(1-\overline{\nu} \circ T B)^{-1}(x,y)\\
&& + D(\overline{\nu}\circ TA-C)(1-\overline{\nu}\circ T B)^{-1}(x,y) \nonumber\\
&:=&\PC{U_{1}^{1}(\overline{\nu},T,A,B,C)+U_{2}^{1}(\overline{\nu},T,A,B,C)}(x,y)\nonumber;
\end{eqnarray}
for $k\geq 2$
\begin{eqnarray}\label{kgeq2}
D^{k}(\Gamma(\overline{\nu}))(x,y)&=& Sym^{k}\circ \overline{\nu}\circ TA-CD^{k}(1-\overline{\nu} \circ T B)^{-1}(x,y)\\
&+&Sym^{k}\circ D^{k}(\overline{\nu}\circ TA-C)(1-\overline{\nu}\circ T B)^{-1}(x,y)\nonumber\\
&+&Sym^{k}\circ \sum_{q=1}^{k-1}{k \choose q}D^{q}(\overline{\nu}\circ TA-C)D^{k-q}(1-\overline{\nu}\circ TB)^{-1}(x,y). \nonumber\\
&:=&Sym^{k}\circ \left(U_{1}^{k}(\overline{\nu},T,A,B,C)+U_{2}^{k}(\overline{\nu},T,A,B,C)\right)(x,y) \nonumber \\
&+&Sym^{k}\circ\left(U_{3}^{k}(\overline{\nu},T,A,B,C)\right)(x,y)\nonumber.
\end{eqnarray}
\end{lema}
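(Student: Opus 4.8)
The plan is to regard $\Gamma(\overline{\nu})(x,y)$, for $y\neq 0$, as a product of two $C^{k}$ functions: the numerator $N(x,y):=(\overline{\nu}\circ T\,A-C)(x,y)$ and the inverse-denominator $M(x,y):=(1-\overline{\nu}\circ T\,B)^{-1}(x,y)$, so that $\Gamma(\overline{\nu})=N\cdot M$ on $\{y\neq 0\}$. Both are well defined and of class $C^{k}$ there: $A,B,C$ are $C^{k}$ away from $D_{0}$ because $F,G$ are $C^{k+1}$ and $\partial_{y}G$ is non-vanishing on $D^{*}$ (Definition \ref{A,B,C}); $\overline{\nu}\circ T$ is $C^{k}$ as a composition of $C^{k}$ maps; and $1-\overline{\nu}\circ T\,B$ is nonzero on $D^{*}$, which is exactly what makes $M$ well defined and $C^{k}$ (this is the same non-degeneracy used implicitly when $\Gamma$ was shown to be well defined in Proposition \ref{fgamma}).

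First I would treat the case $k=1$. Here it is just the ordinary product rule $D(N\cdot M)=(DN)M+N(DM)$, which immediately gives the two summands $D(\overline{\nu}\circ TA-C)(1-\overline{\nu}\circ TB)^{-1}$ and $(\overline{\nu}\circ TA-C)\,D(1-\overline{\nu}\circ TB)^{-1}$; relabelling these as $U_{2}^{1}$ and $U_{1}^{1}$ respectively is purely notational. Note that for $k=1$ no symmetrization is needed since $Sym^{1}$ is the identity on $L^{1}(E;F)$.

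For $k\geq 2$ I would apply the higher-order Leibniz rule for the $k$th derivative of a product of two Banach-space-valued maps: $D^{k}(N\cdot M)=Sym^{k}\circ\sum_{q=0}^{k}\binom{k}{q}\,\phi\big(D^{q}N,D^{k-q}M\big)$, where $\phi$ is the bilinear "multiply-and-concatenate-arguments" pairing (this is exactly the pairing formalized in Definitions \ref{DCP0}--\ref{DCP1}, and the need for $Sym^{k}$ is why those definitions carry it along — the raw Leibniz sum lands in $L^{k}$, not in $L_{s}^{k}$, and must be symmetrized). Splitting off the two extreme terms $q=0$ and $q=k$ from the middle terms $1\le q\le k-1$ gives precisely the three-line formula in \eqref{kgeq2}: the $q=0$ term is $Sym^{k}\circ(\overline{\nu}\circ TA-C)\,D^{k}(1-\overline{\nu}\circ TB)^{-1}=Sym^{k}\circ U_{1}^{k}$, the $q=k$ term is $Sym^{k}\circ D^{k}(\overline{\nu}\circ TA-C)\,(1-\overline{\nu}\circ TB)^{-1}=Sym^{k}\circ U_{2}^{k}$, and the remaining sum is $Sym^{k}\circ\sum_{q=1}^{k-1}\binom{k}{q}D^{q}(\overline{\nu}\circ TA-C)\,D^{k-q}(1-\overline{\nu}\circ TB)^{-1}=Sym^{k}\circ U_{3}^{k}$. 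The $U_{i}^{k}$ are then just names for these three blocks.

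The only genuinely delicate point — and the one I would spell out most carefully — is bookkeeping, not analysis: making sure the bilinear pairing $\phi$ is the correct one (numerator is an $\R^{1\times n}$-valued, i.e. row-vector-valued, object while the denominator factor is scalar-valued, so the "product" is scalar multiplication of a row vector) and that the composition of $Sym^{k}$ with the Leibniz sum is legitimate, i.e. that $D^{k}(N\cdot M)$ genuinely equals its own symmetrization because mixed partials commute (Schwarz's theorem for $C^{k}$ maps, \cite[Theorem 8.12.2]{Dieudonne}-type). Everything else is a direct, if verbose, unwinding of Definitions \ref{DCP0}--\ref{DCP1} and the classical Leibniz formula; the formulas for $D^{k}(\overline{\nu}\circ TA-C)$ and $D^{k}(1-\overline{\nu}\circ TB)^{-1}$ themselves are deferred — they are exactly what Remarks \ref{DCE}, \ref{DCPE} and \ref{DCPI4} record via the operators $\mathcal{DC}$, $\mathcal{DCP}$ and $\mathcal{DICP}$ — so at this stage one only needs to know those derivatives exist and are continuous on $\{y\neq 0\}$, which follows from the $C^{k}$ regularity noted above.
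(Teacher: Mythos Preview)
Your proposal is correct and follows exactly the paper's approach: the paper's proof consists of the single sentence ``This is a direct consequence of Leibnitz's rule,'' and what you have written is precisely a careful unpacking of that sentence (product structure $\Gamma(\overline{\nu})=N\cdot M$, ordinary product rule for $k=1$, higher Leibniz with $Sym^{k}$ for $k\geq 2$, and separation of the $q=0$, $q=k$, and intermediate terms into $U_{1}^{k}$, $U_{2}^{k}$, $U_{3}^{k}$).
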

\begin{proof}
This is a direct consequence of Leibnitz's rule.

\end{proof}

\begin{lema}\label{U1}
Under Definitions \ref{A,B,C} and \ref{DAL}. Assume that $\overline{\nu} \in \mathcal{A}_{L}$ is  a $C^{k}$ function and that
$U_{1}^{k}(\overline{\nu},T,A,B,C):D^{*} \to L^{k}(\R^{n+1},\R^{n})$ is as in Lemma \ref{DGamma}. Then the following formulas hold:
\begin{equation}\label{11.F}
 U_{1}^{1}(\overline{\nu},T,A,B,C)=(\overline{\nu}\circ TA-C)(1-\overline{\nu} \circ T B)^{-2}\left(\overline{\nu}\circ T DB+D \overline{\nu}\circ T.DT.B\right),
 \end{equation}
 for $k\geq 2$
\begin{eqnarray}\label{1k.F}
U^{k}_{1}(\overline{\nu},T,A,B,C)&=&(\overline{\nu}\circ TA-C)k!(1-\overline{\nu}\circ TB)^{-2}Sym^{k}\circ \left(\mathcal{DC}^{(k,k,k)}(\overline{\nu},T)B\right) \nonumber \\
&+&(\overline{\nu}\circ TA-C)k!(1-\overline{\nu}\circ TB)^{-2}Sym^{k}\circ \left(\mathcal{DC}^{(k,1,(k-1))}(\overline{\nu},T)\right) \nonumber\\ &+&(\overline{\nu}\circ TA-C)k!(1-\overline{\nu}\circ TB)^{-2}Sym^{k}\circ \left(\mathcal{DCP}^{(k,0,(k-1))}(\overline{\nu},T,B)\right)\nonumber \\
&+&\displaystyle (\overline{\nu}\circ TA-C)\mathcal{DICP}^{(k,2,k)}(\overline{\nu},T,B),
\end{eqnarray}
where $\mathcal{DC}^{(k_{1},k_{2},k_{3})}(\overline{\nu},T),$ $\mathcal{DCP}^{(k_{1},k_{2},k_{3})}(\overline{\nu},T,B)$ and $\mathcal{DICP}^{(k_{1},k_{2},k_{3})}(\overline{\nu},T,B))$ as in Definitions \ref{DC}, \ref{DCP2} and \ref{DCPI1},
 respectively.
\end{lema}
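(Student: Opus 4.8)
The plan is to derive the formula for $U_1^k$ purely by unwinding definitions, since Lemma~\ref{DGamma} already tells us that
\[
U_1^k(\overline{\nu},T,A,B,C) = (\overline{\nu}\circ TA - C)\, D^k(1-\overline{\nu}\circ TB)^{-1},
\]
so everything reduces to computing $D^k(1-\overline{\nu}\circ TB)^{-1}$ and rewriting it in the notation of Definitions~\ref{DC}, \ref{DCP2}, \ref{DCPI1}. First I would handle the case $k=1$ directly: by the chain rule applied to $x\mapsto 1/x$ composed with $(1-\overline{\nu}\circ TB)$, one gets $D(1-\overline{\nu}\circ TB)^{-1} = (1-\overline{\nu}\circ TB)^{-2} D(\overline{\nu}\circ TB)$, and then the Leibniz rule gives $D(\overline{\nu}\circ TB) = (\overline{\nu}\circ T)\,DB + (D\overline{\nu}\circ T)\,DT\,B$; substituting yields \eqref{11.F} verbatim.

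For $k\geq 2$ the strategy is to expand $D^k(1-\overline{\nu}\circ TB)^{-1}$ in two stages. Stage one: apply Fa\`a di Bruno / the higher-order chain rule to $\mathcal{I}\circ(1-\overline{\nu}\circ TB)$ where $\mathcal{I}(x)=1/x$; this is exactly the content of Remark~\ref{DCPI4}, which identifies $D^k(1-\overline{\nu}\circ TB)^{-1}$ with $\mathcal{DICP}^{(k,1,k)}(\overline{\nu},T,B)$, i.e. a $\mathrm{Sym}^k$ of a sum over $q=1,\dots,k$ of terms $(1-\overline{\nu}\circ TB)^{-(q+1)}$ times products of $D^{r_i}(\overline{\nu}\circ TB)$. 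I would then split off the $q=1$ term from the sum: $q=1$ forces $r_1=k$, giving the single summand $(1-\overline{\nu}\circ TB)^{-2}\,k!\,D^k(\overline{\nu}\circ TB)$ (the $k!$ and the $\mathrm{Sym}^k$ appearing because $D^k(\overline{\nu}\circ TB)$ is already symmetric, matching the convention in Remark~\ref{DCE}(ii)), while the remaining terms $q=2,\dots,k$ are precisely $\mathcal{DICP}^{(k,2,k)}(\overline{\nu},T,B)$ by Definition~\ref{DCPI1}. That accounts for the last line of \eqref{1k.F}.

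Stage two: expand the isolated $q=1$ term $(1-\overline{\nu}\circ TB)^{-2}\,k!\,D^k(\overline{\nu}\circ TB)$. Here I would apply the higher-order Leibniz rule to the product $(\overline{\nu}\circ T)\cdot B$, which by Remark~\ref{DCPE} equals $\mathcal{DCP}^{(k,0,k)}(\overline{\nu},T,B) = \mathrm{Sym}^k\big(\sum_{n=0}^k \binom{k}{n}\phi^{(n,k-n)}(\mathcal{DC}^{(n,1,n)}(\overline{\nu},T), D^{k-n}B)\big)$. I then peel off the two extreme indices $n=k$ and $n=0$: the $n=k$ term is $\mathcal{DC}^{(k,k,k)}(\overline{\nu},T)$ paired with $D^0 B = B$, which by Remark~\ref{DCE}(ii) is $k!\,(D^k\overline{\nu})\circ T\,DT\cdots DT\cdot B$ and matches the first line of \eqref{1k.F}; by Definition~\ref{DC} the piece $\mathcal{DC}^{(k,1,(k-1))}(\overline{\nu},T)$ collects the derivatives $D^{n}(\overline{\nu}\circ T)$ for $1\leq n\leq k-1$ in the Fa\`a di Bruno expansion of $\mathcal{DC}^{(k,1,k)}(\overline{\nu},T)=D^k(\overline{\nu}\circ T)$, which (when $B$ is carried along undifferentiated, $n=k$ in the Leibniz sum paired with the lower-order chain-rule terms) gives the second line; and the terms with $n\leq k-1$ in the Leibniz sum, i.e. those where $B$ genuinely gets differentiated, assemble into $\mathcal{DCP}^{(k,0,(k-1))}(\overline{\nu},T,B)$, the third line. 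Collecting the four groups of terms and pulling out the common factor $(\overline{\nu}\circ TA-C)\,k!\,(1-\overline{\nu}\circ TB)^{-2}$ from the first three yields \eqref{1k.F}.

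The main obstacle is bookkeeping rather than any genuine analytic difficulty: one must be scrupulous about where the combinatorial factors ($k!$, the multinomial $\tfrac{k!}{r_1!\cdots r_q!}$, the binomials $\binom{k}{n}$) and the repeated $\mathrm{Sym}^k$ operators go, and in particular verify that $\mathrm{Sym}^k$ is idempotent (Remark~\ref{RP}(b)) so that nesting it inside the $\mathcal{DC}$, $\mathcal{DCP}$, $\mathcal{DICP}$ constructions is consistent with the single outer $\mathrm{Sym}^k$ already present in Lemma~\ref{DGamma}, and to check that the index ranges $k_2,k_3$ in the auxiliary operators line up exactly with which terms have been split off. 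Everything else is a routine, if tedious, application of the chain rule, the Leibniz rule, and the definitions of Section~4.
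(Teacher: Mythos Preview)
Your proposal is correct and follows essentially the same route as the paper's proof: start from $U_1^k=(\overline{\nu}\circ TA-C)\,D^k(1-\overline{\nu}\circ TB)^{-1}$, apply Fa\`a di Bruno to the reciprocal and split off the $q=1$ term (giving the $\mathcal{DICP}^{(k,2,k)}$ remainder), then expand $D^k((\overline{\nu}\circ T)B)$ by Leibniz, isolate the $n=k$ factor $D^k(\overline{\nu}\circ T)\cdot B$, and finally split that via the chain rule into $\mathcal{DC}^{(k,k,k)}$ and $\mathcal{DC}^{(k,1,k-1)}$; the idempotency of $\mathrm{Sym}^k$ is used exactly as you anticipate. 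One small slip: you say you ``peel off the two extreme indices $n=k$ and $n=0$'', but in fact only $n=k$ is singled out---the $n=0$ term stays inside $\mathcal{DCP}^{(k,0,k-1)}$, as your own subsequent description correctly reflects.
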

\begin{proof}
We do the proof for the formula  \eqref{1k.F}. The proof of the formula \eqref{11.F} is straightforward. From assumption and Remark \ref{DCPI4} it follows that
\begin{eqnarray}\label{U1k}
U_{1}^{k}(\overline{\nu},T,A,B,C)&=&(\overline{\nu}\circ TA-C)D^{k}(1-\overline{\nu} \circ T B)^{-1}.\nonumber\\
&:=&(\overline{\nu}\circ TA-C)I_{1}^{k}.
\end{eqnarray}
We observe that, for $k \geq 2,$ by the chain rule,
\begin{eqnarray}\label{I1.1}
 I_{1}^{k}&=&k!(1-\overline{\nu}\circ TB)^{-2}Sym^{k}\circ\left(D^{k}(\overline{\nu}\circ TB)\right) \nonumber \\
&+&Sym^{k}\circ \sum_{q=2}^{k}\sum_{r_{1}+\ldots+r_{q}=k}\frac{k!}{r_{1}!\ldots r_{q}!}(1-\overline{\nu}\circ TB)^{-(q+1)}D^{r_{1}}(\overline{\nu}\circ TB)\ldots D^{r_{q}}(\overline{\nu}\circ TB) \nonumber.\\
&:=&k!(1-\overline{\nu}\circ TB)^{-2}Sym^{k}\circ \left(I_{2}^{k}\right)+ Sym^{k}\circ \sum_{q=2}^{k}\sum_{r_{1}+\ldots+r_{q}=k}\frac{k!}{r_{1}!\ldots r_{q}!}(1-\overline{\nu}\circ TB)^{-(q+1)} I_{3}^{k} .
 \end{eqnarray}
  Now applying Leibniz's rule to the function $(\overline{\nu}\circ T)B$ , it follows that
 \begin{eqnarray}\label{I2.1}
 I_{2}^{k}&=&Sym^{k}\circ \left(D^{k}(\overline{\nu}\circ T)B\right)+Sym^{k}\circ \left(\sum_{q=0}^{k-1}{k \choose q}D^{q}(\overline{\nu}\circ T).D^{k-q}(B)\right).\\
 &:=&Sym^{k}\circ \left(I_{2,1}^{k}B\right)+ Sym^{k}\circ \left(\sum_{q=0}^{k-1}{k \choose q}I_{2,2}^{k}\right)\nonumber.
 \end{eqnarray}
Moreover, by using the chain rule to the functions $I_{2,1}^{k}$  and $I_{2,2}^{k}$ respectively, we get
\begin{eqnarray}\label{I4.1}
 I_{2,1}^{k}&=&Sym^{k}\circ \left(k!(D^{k}\overline{\nu}) \circ T \underbrace{DT\ldots DT}_{k-times}\right) \nonumber \\
 &+&Sym^{k}\circ \left( \sum_{q=1}^{k-1}\displaystyle\sum_{{r_{1}+\ldots+r_{q}=k}} \frac{k!}{r_{1}!\ldots r_{q}!} (D^{q}\overline{\nu})\circ T.D^{r_{1}}T \ldots D^{r_{q}}T\right),
 \end{eqnarray}
 and
\begin{equation}\label{I5.F}
I_{2,2}^{k}= \displaystyle Sym^{q}\circ  \left(\sum_{n=1}^{q}\displaystyle\sum_{{r_{1}+\ldots+r_{n}=q}} \frac{q!}{r_{1}!\ldots r_{n}!}(D^{n}\nu)\circ T.D^{r_{1}}T \ldots D^{r_{n}}T\right).D^{k-q}(B).
\end{equation}
Therefore, by replacing $(\ref{I5.F})$ and $(\ref{I4.1})$ into $(\ref{I2.1}),$ and using that $Sym^{k}\circ Sym^{k}=Sym^{k},$ we get
 \begin{eqnarray}\label{I2.2}
 I_{2}^{k}&=&Sym^{k}\left(k!(D^{k}\overline{\nu}) \circ T \underbrace{DT\ldots DT}_{k-times}B\right) \nonumber\\
 &+&Sym^{k}\circ \left( \sum_{q=1}^{k-1}\displaystyle\sum_{{r_{1}+\ldots+r_{q}=k}} \frac{k!}{r_{1}!\ldots r_{q}!} (D^{q}\overline{\nu})\circ F.D^{r_{1}}T \ldots D^{r_{q}}TB\right) \nonumber \\
 &+& Sym^{k}\circ \sum_{q=0}^{k-1}{k \choose q}\displaystyle Sym^{q} \left(\sum_{n=1}^{q}\displaystyle\sum_{r_{1}+\ldots+r_{n}=q} \frac{q!(D^{n}\nu)\circ T.D^{r_{1}}T \ldots D^{r_{n}}T}{r_{1}!\ldots r_{n}!}.D^{k-q}(B)\right).\nonumber \\
 \end{eqnarray}
Hence, on account of Definitions \ref{DC} and \ref{DCP2} we have
 \begin{equation}\label{I2.F}
 I_{2}^{k}:=Sym^{k}(\mathcal{DC}^{(k,1,k)}(\overline{\nu},T)B)+\mathcal{DC}^{(k,1,(k-1))}(\overline{\nu},T,B)+\mathcal{DCP}^{(k,0,(k-1))}(\overline{\nu},T,B).
 \end{equation}
 By similar computation as above, in view of Definitions \ref{DC} and \ref{DCP2} we reach that
\begin{eqnarray}\label{I3.0F}
I_{3}^{k}&:=&\mathcal{DCP}^{(r_{1},0,r_{1})}(\overline{\nu},T,B) \ldots \mathcal{DCP}^{(r_{q},0,r_{q})}(\overline{\nu},T,B).
\end{eqnarray}
Hence, by using Definition \ref{DCP2}, Eq. \eqref{I3.0F} becomes
\begin{equation}\label{I3.F}
I_{3}^{k}=\prod^{(r_{1}, \ldots, r_{q}, r)}(\overline{\nu},T,B).
\end{equation}
Whence, on account of Definition \ref{DCPI1},  we get
\begin{eqnarray}\label{N.0F}
 \displaystyle  Sym^{k}\circ \left(\sum_{q=2}^{k}\sum_{r_{1}+\ldots+r_{q}=k}\frac{k!}{r_{1}!\ldots r_{q}!}(1-\overline{\nu}\circ TB)^{-(q+1)} I_{3}^{k} \right):=\displaystyle \mathcal{DICP}^{(k,2,k)}(\overline{\nu},T,B).
\end{eqnarray}
Thus, by replacing \eqref{N.0F} and $(\ref{I2.F})$ into $(\ref{I1.1}),$ we get
\begin{eqnarray}\label{I1.1F}
I_{1}^{k}&=&Sym^{k}\circ \left(k!(1-\overline{\nu}\circ TB)^{-2}(\mathcal{DC}^{(k,1,k)}((\overline{\nu},T)B))\right)\nonumber\\
&+&Sym^{k}\circ \left(k!(1-\overline{\nu}\circ TB)^{-2}(\mathcal{DC}^{(k,1,(k-1))}(\overline{\nu},T,B))\right)\nonumber\\
&+& Sym^{k}\circ \left(k!(1-\overline{\nu}\circ TB)^{-2}(\mathcal{DCP}^{(k,0,(k-1))}(\overline{\nu},T,B))\right)\nonumber \\
&+&\displaystyle \mathcal{DICP}^{(k,2,k)}(\overline{\nu},T,B).
\end{eqnarray}
Therefore, by replacing \eqref{I1.1F} into \eqref{U1k}, and on account of $Sym^{k} \circ Sym^{k}=Sym^{k},$ it follows formula \eqref{1k.F}.
\end{proof}
\begin{lema}\label{U2}
Under Definitions \ref{A,B,C} and \ref{DAL}. Assume that $\overline{\nu} \in \mathcal{A}_{L}$ is a $C^{k}$ function and that  $U_{2}^{k}(\overline{\nu},T,A,B,C):D^{*} \to L^{k}(\R^{n+1},\R^{n})$ is as in Lemma $\ref{DGamma}.$
Then, the following formulas hold:
\begin{equation}\label{U21F}
U_{2}^{1}(\overline{\nu},T,A,B,C)=\left((D\overline{\nu})\circ TDTA+\overline{\nu}\circ TDA-D(C)\right)\left(1-\overline{\nu}\circ TB\right)^{-1},
\end{equation}
for $k\geq 2$
\begin{eqnarray}\label{U2kF}
U_{2}^{k}(\overline{\nu},T,A,B,C)&=&\displaystyle Sym^{k}\left(\mathcal{DC}^{(k,k,k)}(\overline{\nu},T)A\right)(1-\overline{\nu}\circ TB)^{-1}\nonumber\\
&+&Sym^{k}\left(\mathcal{DC}^{(k,1,k-1)}(\overline{\nu},T)A\right)(1-\overline{\nu}\circ T B)^{-2}\nonumber \\
&+&Sym^{k}\left(\mathcal{DCP}^{(k,0,k-1)}(\overline{\nu},T,A)\right)(1-\overline{\nu}\circ T B)^{-2} \nonumber \\
&-&Sym^{k}\left(D^{k}(C)\right).(1-\overline{\nu}\circ T B)^{-2},
\end{eqnarray}
where $\mathcal{DC}^{(k_{1},k_{2},k_{3})}(\overline{\nu},T),$ $\mathcal{DCP}^{(k_{1},k_{2},k_{3})}(\overline{\nu},T,B),$ $\mathcal{DICP}^{(k_{1},k_{2},k_{3})}(\overline{\nu},T,B))$ as in Definitions \ref{DC}, \ref{DCP2} and \ref{DCPI1},  respectively.
\end{lema}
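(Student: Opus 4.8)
The plan is to reproduce, in a streamlined form, the bookkeeping carried out in the proof of Lemma \ref{U1}, taking advantage of the fact that in $U_2^k$ the factor $(1-\overline{\nu}\circ TB)^{-1}$ is only multiplied and never differentiated, so that, unlike in Lemma \ref{U1}, no $\mathcal{DICP}$-type term arises. I would first dispose of the case $k=1$: applying the chain rule to $\overline{\nu}\circ T$ and Leibniz's rule to the matrix product $(\overline{\nu}\circ T)A$ gives $D(\overline{\nu}\circ TA-C)=(D\overline{\nu})\circ T\,DT\,A+\overline{\nu}\circ T\,DA-DC$, and multiplying by $(1-\overline{\nu}\circ TB)^{-1}$ yields \eqref{U21F} immediately.

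For $k\geq 2$, I would start from $U_2^k(\overline{\nu},T,A,B,C)=D^k(\overline{\nu}\circ TA-C)(1-\overline{\nu}\circ TB)^{-1}$ and decompose $D^k(\overline{\nu}\circ TA-C)=D^k((\overline{\nu}\circ T)A)-D^kC$. Leibniz's rule applied to the product $(\overline{\nu}\circ T)\cdot A$ separates $D^k((\overline{\nu}\circ T)A)$ into the top-order piece $(D^k(\overline{\nu}\circ T))A$ and the lower-order sum $\sum_{q=0}^{k-1}\binom{k}{q}D^q(\overline{\nu}\circ T)\,D^{k-q}A$. Expanding $D^k(\overline{\nu}\circ T)$ by the chain rule and using $\mathcal{DC}^{(k,1,k)}(\overline{\nu},T)=D^k(\overline{\nu}\circ T)$ (Remark \ref{DCE}(i)) together with the splitting $\mathcal{DC}^{(k,1,k)}=\mathcal{DC}^{(k,k,k)}+\mathcal{DC}^{(k,1,k-1)}$ of Definition \ref{DC} into its fully nested term (which is $k!\,(D^k\overline{\nu})\circ T\,DT\cdots DT$ by Remark \ref{DCE}(ii)) and its genuinely mixed chain-rule terms, the top-order piece becomes $Sym^k(\mathcal{DC}^{(k,k,k)}(\overline{\nu},T)A)+Sym^k(\mathcal{DC}^{(k,1,k-1)}(\overline{\nu},T)A)$. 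The remaining Leibniz sum, once each inner composite derivative $D^q(\overline{\nu}\circ T)$ is expanded by the chain rule, is by Definition \ref{DCP2} and the notation \eqref{N6.3} exactly $Sym^k(\mathcal{DCP}^{(k,0,k-1)}(\overline{\nu},T,A))$. Putting these together with $-Sym^k(D^kC)$, multiplying by $(1-\overline{\nu}\circ TB)^{-1}$, and absorbing nested symmetrizations via $Sym^k\circ Sym^k=Sym^k$, gives \eqref{U2kF}.

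I expect the main obstacle to be purely combinatorial: verifying that the multinomial factors $k!/(r_1!\cdots r_q!)$, the binomials $\binom{k}{n}$, and the positions of the symmetrizing operators produced by the iterated Leibniz and chain rules coincide precisely with those built into the definitions of $\mathcal{DC}^{(k_1,k_2,k_3)}$ and $\mathcal{DCP}^{(k_1,k_2,k_3)}$, so that the split of $D^k(\overline{\nu}\circ T)$ into its top-order and mixed parts is consistent. The analytic content is negligible: everything follows from the Leibniz and chain rules already packaged in Definitions \ref{DC}, \ref{DCP1} and \ref{DCP2}, exactly as in Lemma \ref{U1} but with the $(1-\overline{\nu}\circ TB)^{-1}$ factor riding along unchanged.
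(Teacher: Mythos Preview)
Your proposal is correct and follows precisely the route the paper indicates: the proof of Lemma~\ref{U2} is stated there simply as ``quite similar to the development from Eq.~\eqref{I1.1}'', and your Leibniz/chain-rule expansion of $D^{k}(\overline{\nu}\circ TA-C)$ followed by identification of the $\mathcal{DC}^{(k,k,k)}$, $\mathcal{DC}^{(k,1,k-1)}$ and $\mathcal{DCP}^{(k,0,k-1)}$ pieces is exactly that development, with the correct observation that no $\mathcal{DICP}$ term appears because the factor $(1-\overline{\nu}\circ TB)^{-1}$ is never differentiated here.
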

\begin{proof}
The proof is quite similar to the development from Eq. \eqref{I1.1}.
\end{proof}
\begin{lema}\label{U3}
Under Definitions \ref{A,B,C} and \ref{DAL}. Assume that $\overline{\nu} \in \mathcal{A}_{L}$ is a $C^{k}$ function and that   $U_{3}^{k}(\overline{\nu},T,A,B,C):D^{*} \to L^{k}(\R^{n+1},\R^{n})$ as in Lemma \ref{DGamma}.
Then the following equality holds:
\begin{multline}
U_{3}^{k}(\overline{\nu},T,A,B,C)=\\Sym^{k}\left(\sum_{q=1}^{k-1}{k \choose q}\phi^{(q,k-q)}((\mathcal{DCP}^{(q,0,q)}(\overline{\nu},T,A)-D^{q}(C)),\mathcal{DICP}^{(k-q,1,k-q)}(\overline{\nu},T,B))\right),
\end{multline}
where $\mathcal{DCP}^{(k_{1},k_{2},k_{3})}(\overline{\nu},T,B),$ $\phi^{(q,k-q)},$ $\mathcal{DICP}^{(k_{1},k_{2},k_{3})}(\overline{\nu},T,B))$ as in Definitions \ref{DCP2}, \ref{MM1} and  \ref{DCPI1}, respectively.
\end{lema}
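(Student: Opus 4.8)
The statement to prove is Lemma \ref{U3}, which gives a formula for $U_3^k(\overline{\nu},T,A,B,C)$, the "cross term" in the Leibniz expansion of $D^k(\Gamma(\overline{\nu}))$. Recall from Lemma \ref{DGamma} that
\[
U_3^k(\overline{\nu},T,A,B,C) = \sum_{q=1}^{k-1}\binom{k}{q} D^q(\overline{\nu}\circ TA - C)\, D^{k-q}(1-\overline{\nu}\circ TB)^{-1},
\]
before symmetrization. The plan is to expand each of the two factors inside the sum using the generalized-derivative notation already developed, and then recognize the result as an instance of the bilinear pairing $\phi^{(q,k-q)}$ applied to these two families of derivatives.

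\textbf{Step 1: Identify the first factor.} For each $q$ with $1\le q\le k-1$, I would write $D^q(\overline{\nu}\circ TA - C) = D^q((\overline{\nu}\circ T)A) - D^q(C)$. By Remark \ref{DCPE} (Equation \eqref{DCPE1}), the first piece is exactly $\mathcal{DCP}^{(q,0,q)}(\overline{\nu},T,A)$ — here playing the role of $B$ in that remark is the matrix-valued function $A$. Hence the first factor equals $\mathcal{DCP}^{(q,0,q)}(\overline{\nu},T,A) - D^q(C)$, which is precisely the first argument appearing in the claimed formula.

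\textbf{Step 2: Identify the second factor.} For the second factor, by Remark \ref{DCPI4} (Equation \eqref{inv0.1}) we have $D^{k-q}(1-\overline{\nu}\circ TB)^{-1} = \mathcal{DICP}^{(k-q,1,k-q)}(\overline{\nu},T,B)$, which is the second argument in the claimed formula.

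\textbf{Step 3: Package the product as $\phi^{(q,k-q)}$ and symmetrize.} The product of a $q$-multilinear map and a $(k-q)$-multilinear map (both valued in spaces where the relevant bilinear "multiplication" $\mathcal{B}$ — row times scalar, or the appropriate matrix product — is defined) is by Definitions \ref{DCP0} and \ref{DCP1} exactly $\phi^{(q,k-q)}$ applied to the pair. So term $q$ becomes $\binom{k}{q}\,\phi^{(q,k-q)}\big(\mathcal{DCP}^{(q,0,q)}(\overline{\nu},T,A)-D^q(C),\ \mathcal{DICP}^{(k-q,1,k-q)}(\overline{\nu},T,B)\big)$. Summing over $q=1,\dots,k-1$ and applying $Sym^{k}$ — which is legitimate because in Lemma \ref{DGamma} the term $U_3^k$ already appears under $Sym^k$, and $Sym^k\circ Sym^k = Sym^k$ by Remark \ref{RP}(b) — yields the asserted identity. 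I would close by noting that the whole computation is "a direct consequence of Leibniz's rule together with Remarks \ref{DCPE} and \ref{DCPI4}," since nothing beyond bookkeeping with the already-established generalized-derivative formulas is involved.

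\textbf{Main obstacle.} The only genuine subtlety — and the step I would be most careful about — is matching the \emph{types} of the objects so that the bilinear map $\mathcal{B}$ underlying $\phi^{(q,k-q)}$ in Definition \ref{DCP0} is the right one: the first factor $\mathcal{DCP}^{(q,0,q)}(\overline{\nu},T,A)-D^q(C)$ is $\R^{1\times n}$-valued (a row vector, since $\overline{\nu}\circ TA$ and $C$ are rows), while the second factor is scalar-valued, so $\mathcal{B}$ is just scalar multiplication of a row vector; and one must check that the symmetrization and the identification $\mathcal{DCP}^{(q,0,q)}(\overline{\nu},T,A) = D^q((\overline{\nu}\circ T)A)$ are consistent with the convention in Remark \ref{DCPE}, where $A$ takes the place of the column vector $B$. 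Once the bookkeeping of these multilinear-algebra types is pinned down, the proof is immediate from Leibniz's rule.
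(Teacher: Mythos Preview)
Your proposal is correct and follows essentially the same approach as the paper: the paper's proof simply says ``similar to that of Lemma~\ref{U1}'' and points to the developments of $I_1^k$ and $I_2^k$, which amount precisely to invoking Remarks~\ref{DCPE} and~\ref{DCPI4} to rewrite $D^{q}((\overline{\nu}\circ T)A)$ and $D^{k-q}(1-\overline{\nu}\circ TB)^{-1}$ in the $\mathcal{DCP}$/$\mathcal{DICP}$ notation, then packaging the product via $\phi^{(q,k-q)}$ and symmetrizing. Your careful remark about the types underlying the bilinear pairing $\mathcal{B}$ is a point the paper leaves implicit.
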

\begin{proof}
The proof is similar to that of Lemma \ref{U1}. For more details, see all the developments of the formulas  $I_{1}^{k}$ (Eq. \eqref{I1.1}) and $I_{2}^{k}$ (Eq. \eqref{I2.1}).
\end{proof}
As a direct consequence of Lemmas \ref{DGamma}, \ref{U1}, \ref{U2} and \ref{U3} we obtain a formula for the $kth$ derivative of the function $\Gamma(\overline{\nu})$ at the point $(x,y),$ for $y\neq 0.$
\begin{lema}\label{MC}
Under Definitions \ref{A,B,C}, \ref{DAL} and $\ref{gamma}.$ Assume that $\overline{\nu} \in \mathcal{A}_{L}$ is a  $C^{k}$ function a that  $y \neq 0,$ then the following formulas hold:
\begin{multline}
D(\Gamma(\overline{\nu}))(x,y)=\left((\overline{\nu}\circ TA-C)(1-\overline{\nu} \circ T B)^{-2}\left(\overline{\nu}\circ TDB+D\overline{\nu}\circ T.DT.B\right)\right.\\
+\left.\left(\overline{\nu}\circ TDTA+\overline{\nu}\circ TDA-DC\right)\left(1-\overline{\nu}\circ T B\right)^{-1}\right)(x,y).\\
\end{multline}
 for $k\geq 2$
\begin{eqnarray}
D^{k}(\Gamma(\overline{\nu}))(x,y)&=&\left((\overline{\nu}\circ TA-C)(1-\overline{\nu}\circ TB)^{-2}Sym^{k}\circ \left(\mathcal{DC}^{(k,k,k)}(\overline{\nu},T)B\right)\right. \nonumber \\
&+&(\overline{\nu}\circ TA-C)(1-\overline{\nu}\circ TB)^{-2}Sym^{k}\circ\left(\mathcal{DC}^{(k,1,(k-1))}(\overline{\nu},T)\right)\nonumber\\
&+&(\overline{\nu}\circ TA-C)(1-\overline{\nu}\circ TB)^{-2}Sym^{k}\circ\left(\mathcal{DCP}^{(k,0,(k-1))}(\overline{\nu},T,B)\right)\nonumber\\
&+&\displaystyle (\overline{\nu}\circ TA-C)\mathcal{DICP}^{(k,2,k)}(\overline{\nu},T,B)\nonumber \\
&+&\displaystyle (1-\overline{\nu}\circ T B)^{-1} Sym^{k}\circ \left(\mathcal{DC}^{(k,k,k)}(\overline{\nu},T)A\right)\nonumber\\
&+&(1-\overline{\nu}\circ TB)^{-2}Sym^{k}\nonumber \left(\mathcal{DC}^{(k,1,k-1)}(\overline{\nu},T)A+\mathcal{DCP}^{(k,0,k-1)}(\overline{\nu},T,A)\right)\nonumber \\
&-& (1-\overline{\nu}\circ TB)^{-2}Sym^{k}\circ \left(D^{k}(C)\right)+U_{3}^{k}(\overline{\nu},T,A,B,C).
\end{eqnarray}
where $\mathcal{DC}^{(k_{1},k_{2},k_{3})}(\overline{\nu},T),$ $\mathcal{DCP}^{(k_{1},k_{2},k_{3})}(\overline{\nu},T,B),$ $\mathcal{DICP}^{(k_{1},k_{2},k_{3})}(\overline{\nu},T,B)),$ $\phi^{(q,k-q)}$ as in Definitions \ref{DC}, \ref{DCP2}, \ref{DCPI1} and \ref{MM1}, respectively and $U_{3}^{k}(\overline{\nu},T,A,B,C)$  as in Lemma $\ref{U3}.$
\end{lema}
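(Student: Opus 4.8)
The plan is to derive the stated formula for $D^{k}(\Gamma(\overline{\nu}))$ by assembling the four lemmas that have just been established, \ref{DGamma}, \ref{U1}, \ref{U2} and \ref{U3}, in the obvious way. Concretely, Lemma \ref{DGamma} already expresses $D^{k}(\Gamma(\overline{\nu}))(x,y)$, for $y\neq 0$, as $Sym^{k}$ applied to the sum $U_{1}^{k}+U_{2}^{k}+U_{3}^{k}$, where the three summands are the Leibniz-rule pieces coming from differentiating the product $(\overline{\nu}\circ TA-C)\cdot(1-\overline{\nu}\circ TB)^{-1}$. So the first step is simply to recall that identity verbatim (and its $k=1$ counterpart). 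The second step is to substitute the closed forms for $U_{1}^{k}$ from Lemma \ref{U1} (formulas \eqref{11.F} and \eqref{1k.F}) and for $U_{2}^{k}$ from Lemma \ref{U2} (formulas \eqref{U21F} and \eqref{U2kF}); the term $U_{3}^{k}$ is left in the compact form provided by Lemma \ref{U3}, exactly as it appears on the right-hand side of the statement of Lemma \ref{MC}.

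The one genuine simplification to carry out is the bookkeeping of the $Sym^{k}$ operators. Each of $U_{1}^{k}$, $U_{2}^{k}$ is already written as $Sym^{k}$ applied to a sum of terms of the form (scalar factor)$\,\cdot\,\mathcal{DC}$, (scalar factor)$\,\cdot\,\mathcal{DCP}$, or $\mathcal{DICP}$; and $Sym^{k}$ is idempotent by Remark \ref{RP}(b), so the outer $Sym^{k}$ coming from Lemma \ref{DGamma} may be absorbed into each term. This lets us rewrite $Sym^{k}\circ(U_{1}^{k}+U_{2}^{k}+U_{3}^{k})$ as the displayed flat sum of eight terms, the scalar coefficients $(\overline{\nu}\circ TA-C)$, $(1-\overline{\nu}\circ TB)^{-1}$, $(1-\overline{\nu}\circ TB)^{-2}$ being pulled outside the $Sym^{k}$ since they are $0$-linear (they commute with symmetrization). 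For $k=1$ there is no $U_{3}$ term and no $Sym$ is needed, so the first displayed formula is immediate from \eqref{11.F} and \eqref{U21F}.

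I do not expect any real obstacle here: the proof of Lemma \ref{MC} is purely a matter of transcription and collection of terms already proved in Lemmas \ref{DGamma}--\ref{U3}. The only place where a little care is needed is making sure the $k!$ factors appearing inside \eqref{1k.F} and \eqref{U2kF} are handled consistently — the factor $k!(1-\overline{\nu}\circ TB)^{-2}$ multiplying the $Sym^{k}$-of-$\mathcal{DC}$ and $\mathcal{DCP}$ pieces in Lemma \ref{U1} must be matched against the convention in Remark \ref{DCE}(ii), equation \eqref{Dkkk}, so that the coefficients in the final formula are exactly those stated. Thus the write-up is short: state that the result follows by substituting Lemmas \ref{U1}, \ref{U2}, \ref{U3} into the Leibniz expansion of Lemma \ref{DGamma}, using $Sym^{k}\circ Sym^{k}=Sym^{k}$ and the fact that the scalar factors commute with $Sym^{k}$, and then regroup.

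\begin{proof}
For $k=1$ the formula is an immediate consequence of Lemma \ref{DGamma} together with formulas \eqref{11.F} of Lemma \ref{U1} and \eqref{U21F} of Lemma \ref{U2}.

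For $k\geq 2$, Lemma \ref{DGamma} gives
\begin{equation*}
D^{k}(\Gamma(\overline{\nu}))(x,y)=Sym^{k}\circ\left(U_{1}^{k}(\overline{\nu},T,A,B,C)+U_{2}^{k}(\overline{\nu},T,A,B,C)+U_{3}^{k}(\overline{\nu},T,A,B,C)\right)(x,y).
\end{equation*}
Substitute formula \eqref{1k.F} of Lemma \ref{U1} for $U_{1}^{k}$ and formula \eqref{U2kF} of Lemma \ref{U2} for $U_{2}^{k}$. Since the scalar-valued factors $(\overline{\nu}\circ TA-C)$, $(1-\overline{\nu}\circ TB)^{-1}$ and $(1-\overline{\nu}\circ TB)^{-2}$ are $0$-linear, they commute with $Sym^{k}$ and may be moved outside. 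Moreover each of the resulting terms is, by \eqref{1k.F} and \eqref{U2kF}, already of the form $Sym^{k}$ applied to a $k$-multilinear map, so by Remark \ref{RP}(b) the outer $Sym^{k}$ coming from Lemma \ref{DGamma} is absorbed, i.e. $Sym^{k}\circ Sym^{k}=Sym^{k}$. Collecting the terms, and keeping $U_{3}^{k}(\overline{\nu},T,A,B,C)$ in the form provided by Lemma \ref{U3}, we obtain precisely the displayed expression for $D^{k}(\Gamma(\overline{\nu}))(x,y)$.
\end{proof}
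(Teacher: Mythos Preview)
Your proposal is correct and follows exactly the approach of the paper, which presents Lemma \ref{MC} as a direct consequence of Lemmas \ref{DGamma}, \ref{U1}, \ref{U2} and \ref{U3} without any further argument. Your write-up simply makes explicit the two bookkeeping points (idempotence of $Sym^{k}$ and commutation of scalar factors through $Sym^{k}$) that the paper leaves implicit.
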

\subsection{Part 2: The norm of the $ith$ derivative}
In this sub-section will be estimated the norms of the $ith$ derivative of the functions $A(x,y)$, $B(x,y)$ and $C(x,y)$ around of a neighborhood of $D_{0}.$ We start by noticing the following simple but useful lemma.
\begin{lema}\label{ld}
 Let
 \begin{equation}\label{Ed}
 \displaystyle d(x,y)=\begin{cases}
\displaystyle\alpha(A^{*}_{+}+\partial_{x}\psi_{+}(x,y))+|y|\partial_{y}\psi_{+}(x,y),& y> 0,\\\\
\displaystyle\alpha(A^{*}_{-}+\partial_{x}\psi_{-}(x,y))+|y|\partial_{y}\psi_{-}(x,y), & y< 0,
\end{cases}
\end{equation}
and
 \begin{equation}\label{rho}
 \displaystyle \rho(x,y)=\begin{cases}
\displaystyle \frac{1}{\displaystyle\min_{0\leq j \leq i}\mid(\alpha A^{*}_{+}+\psi_{+}(x,y)+y\partial_{y}\psi_{+}(x,y))\mid^{j+1}},& y> 0,\\\\
\displaystyle \frac{1}{\displaystyle\min_{0\leq j \leq i}\mid(\alpha A^{*}_{-}+\psi_{-}(x,y)+|y|\partial_{y}\psi_{-}(x,y))\mid^{j+1}}, & y< 0,
\end{cases}
\end{equation}
 Then,  $d$ and $\rho$ are defined in a neighborhood $\widetilde{U}$ of $D_{0}$ and there exists a constant $C\geq0$ such that the following estimative holds:
 \begin{equation}\label{kd}
 \norm{ D^{i}(d(x,y)^{-1})} \leq C \rho(x,y)|y|^{\gamma-i}, \quad \mbox{for all} \quad (x,y) \in \widetilde{U}.
 \end{equation}
 \end{lema}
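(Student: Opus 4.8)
The plan is to work separately in the regions $y>0$ and $y<0$ (the two cases being symmetric), and to reduce the estimate on $D^i(d^{-1})$ to a combination of the Fa\`a di Bruno formula applied to the scalar reciprocal function together with the $(L_1)$ bounds on the derivatives of $\psi_\pm$. First I would observe that $d(x,y)$ is exactly the quantity obtained by differentiating the defining expression $G(x,y)=y_\pm^\ast+|y|^\alpha[A_\pm^\ast+\psi_\pm(x,y)]$ and extracting the factor $|y|^{\alpha-1}$; hence $d$ is $C^{k}$ on a punctured neighborhood of $D_0$, and since $A_\pm^\ast\neq 0$ and $\psi_\pm(x,y)=O(|y|^\gamma)$ with $\gamma>k-1>0$, one has $d(x,0^\pm)=\alpha A_\pm^\ast\neq 0$; so by continuity $d$ is bounded away from zero on a possibly smaller neighborhood $\widetilde U$ of $D_0$, which is where $d^{-1}$ and $\rho$ are well-defined. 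This is the easy part.

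Next I would apply the classical Fa\`a di Bruno formula to $d^{-1}=\mathcal I\circ d$ where $\mathcal I(t)=1/t$, writing
\begin{equation*}
D^i(d^{-1})=\sum_{q=1}^{i}\frac{(-1)^q q!}{d^{q+1}}\sum_{r_1+\dots+r_q=i}\frac{i!}{r_1!\cdots r_q!}\,D^{r_1}d\cdots D^{r_q}d,
\end{equation*}
so that
\begin{equation*}
\norm{D^i(d^{-1})}\le \sum_{q=1}^{i}\frac{q!}{|d|^{q+1}}\sum_{r_1+\dots+r_q=i}\frac{i!}{r_1!\cdots r_q!}\norm{D^{r_1}d}\cdots\norm{D^{r_q}d}.
\end{equation*}
Then the key computation is to estimate $\norm{D^{r}d}$ for $1\le r\le i$. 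From \eqref{Ed}, $d$ is a sum of $\alpha A_\pm^\ast$ (a constant, killed by any derivative), $\alpha\,\partial_x\psi_\pm$, and $|y|\,\partial_y\psi_\pm$; differentiating $r$ times and invoking the $(L_1)$ estimates $\norm{\partial^{l+m}\psi_\pm/\partial x^l\partial y^m}\le K|y|^{\gamma-m}$ — together with the fact that each derivative hitting the factor $|y|$ lowers its power by one — gives $\norm{D^r d}\le C_r |y|^{\gamma-r}$ for $r\ge 1$ and some constants $C_r$ depending only on $K,\alpha,k$. Substituting into the Fa\`a di Bruno bound, the product $\norm{D^{r_1}d}\cdots\norm{D^{r_q}d}$ contributes $|y|^{q\gamma-i}$, and since $\gamma>k-1\ge i-1$ on the relevant range and $|y|\le 1$ on $\widetilde U$, we have $|y|^{q\gamma-i}\le |y|^{\gamma-i}$ for every $q\ge 1$; finally $|d|^{-(q+1)}\le \rho(x,y)$ for each $q\le i$ by the very definition \eqref{rho} of $\rho$ (which takes the worst such power). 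Absorbing all numerical constants $i!$, $q!$, the multinomial coefficients and the $C_r$ into a single constant $C\ge 0$ yields \eqref{kd}.

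The main obstacle I anticipate is the bookkeeping in the estimate of $\norm{D^r d}$: one must be careful that the term $|y|\,\partial_y\psi_\pm$ does not produce a worse power of $|y|$ than $\gamma-r$ after $r$ differentiations. When all $r$ derivatives are taken in $x$ this is immediate from $(L_1)$; when some are taken in $y$, each $\partial_y$ applied to $\partial_y\psi_\pm$ drops the exponent by one but the surviving factor $|y|$ (if not yet differentiated) keeps it up, and when the factor $|y|$ itself is differentiated we lose the $|y|$ but gain nothing worse — so the net exponent is always $\ge\gamma-r$; likewise $\partial_x$ never lowers the exponent. One should also note that $|y|$ is only $C^0$ at $y=0$, but we work strictly away from $D_0$ inside $\widetilde U\smallsetminus D_0$ where $|y|$ is real-analytic, so no smoothness issue arises. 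Finally, the constant $C$ and the neighborhood $\widetilde U$ are uniform in $x$ because the $(L_1)$ bounds are uniform in $x$ and $A_\pm^\ast$ are fixed constants.
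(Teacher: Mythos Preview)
Your proposal is correct and follows essentially the same route as the paper: the paper's proof is a single sentence pointing to the Fa\`a di Bruno expansion of the reciprocal (Remark~\ref{DCPI4}) together with ``norm properties'', and you have simply unpacked that sentence in full, supplying the estimate $\norm{D^r d}\le C_r|y|^{\gamma-r}$ from $(L_1)$ and the combinatorics of the composition. The only point to flag is cosmetic: the expression appearing inside $\rho$ in \eqref{rho} does not literally coincide with $d$ in \eqref{Ed} (a typo in the paper), so the step ``$|d|^{-(q+1)}\le\rho$ by the very definition of $\rho$'' should, strictly speaking, be phrased as ``$|d|^{-(q+1)}\le C\rho$'' using that both quantities limit to $\alpha A^\ast_\pm\neq 0$ on $\widetilde U$; this extra constant is harmlessly absorbed into your final $C$.
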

 Moreover, the limit
\begin{equation}\label{lim:1}
\displaystyle \lim_{(x,y)\to (a,0^{\pm})}\rho(x,y)
\end{equation}
exists, for all $(a,0)\in D_{0}.$
 \begin{proof}
 Since $A^{*}_{\pm}\neq 0,$ then the estimate \eqref{kd}  is a directly consequence of Example \ref{DCPI4} and norm properties. From Assumption  \ref{Assumptions}$(L1),$ it follows that the limit $\lim_{(x,y)\to (a,0^{\pm})}\rho(x,y)$ exists. This finishes the proof of lemma.
 \end{proof}
 As a consequence of Lemma \ref{ld} and Leibnitz rule we get:
\begin{corolario}\label{DAC}
 Let $0 \leq i \leq k$ be a integer. Assume  $A, B$ and  $C$  as in Definition \ref{A,B,C} and $\rho$ as in Lemma \ref{ld}. Then, there is a constant $C\geq 0$ such that the following inequalities hold:
\begin{equation}
\parallel D^{i}A(x,y)\parallel \leq C \rho(x,y)|y|^{\gamma-i+1},
\end{equation}
\begin{equation}\label{DCk}
\parallel D^{i}C(x,y)\parallel \leq C \rho(x,y)|y|^{\gamma-i+1},
\end{equation}
\begin{equation}\label{DBk}
 ||D^{i}B(x,y)||\leq C \rho(x,y)|y|^{\gamma-i},
\end{equation}
in a neighborhood of $D_{0}.$
\end{corolario}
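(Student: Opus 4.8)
The plan is to deduce Corollary \ref{DAC} directly from Lemma \ref{ld}, the Leibniz rule, and the defining formulas for $A$, $B$, $C$. First I would write out each of $A$, $B$, $C$ explicitly near $D_0$ using the form of $F$ and $G$ given in Assumption \ref{Assumptions}$(L_1)$. For $y>0$ one has $\partial_y G(x,y)=|y|^{\alpha-1}d(x,y)$, where $d$ is exactly the function appearing in \eqref{Ed} (up to the $|y|^{\alpha-1}$ factor that has been pulled out), while $\partial_x F(x,y)=|y|^{\alpha}\partial_x\varphi_+(x,y)$, $\partial_y F(x,y)=|y|^{\alpha-1}(\alpha B_+^\ast+\alpha\varphi_+ + |y|\partial_y\varphi_+)$, and similarly $\partial_x G(x,y)=|y|^{\alpha}\partial_x\psi_+(x,y)$. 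Consequently $A=\partial_x F\,(\partial_y G)^{-1}=|y|\,\partial_x\varphi_+\,d^{-1}$, $C=\partial_x G\,(\partial_y G)^{-1}=|y|\,\partial_x\psi_+\,d^{-1}$, and $B=\partial_y F\,(\partial_y G)^{-1}=(\alpha B_+^\ast+\alpha\varphi_+ + |y|\partial_y\varphi_+)\,d^{-1}$; the case $y<0$ is identical with the obvious sign changes. The key structural point is that $A$ and $C$ carry an extra explicit factor of $|y|$ relative to $B$, which is what produces the exponent $\gamma-i+1$ in \eqref{DCk} versus $\gamma-i$ in \eqref{DBk}.

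Next I would apply the Leibniz rule to each of these products. For $D^i C$, writing $C=|y|\,\partial_x\psi_+\,d^{-1}$, one gets a sum over $i_1+i_2+i_3=i$ of terms $D^{i_1}(|y|)\,D^{i_2}(\partial_x\psi_+)\,D^{i_3}(d^{-1})$. The factor $D^{i_1}(|y|)$ is bounded by $C|y|^{1-i_1}$ (indeed it vanishes for $i_1\ge 2$); the factor $D^{i_2}(\partial_x\psi_+)$ is controlled by the estimates in $(L_1)$, namely $\|D^{i_2}\partial_x\psi_+\|\le K|y|^{\gamma-i_2-1+1}=K|y|^{\gamma-i_2}$ — wait, more carefully, $(L_1)$ bounds the mixed derivatives of $\psi_+$ by $K|y|^{\gamma-m}$ where $m$ counts $y$-derivatives, so any $i_2$-th total derivative of $\partial_x\psi_+$ is bounded by $K|y|^{\gamma-i_2}$ since at most $i_2$ of those derivatives are in $y$ and lowering fewer $y$-derivatives only improves the bound (as $|y|\le 1$); and $\|D^{i_3}(d^{-1})\|\le C\rho(x,y)|y|^{\gamma-i_3}$ by Lemma \ref{ld}. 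Hmm — that would give an extra $|y|^{\gamma}$ too many. I need to be careful: the cleaner route is to absorb the $\psi$-estimates into $d^{-1}$ as already done in Lemma \ref{ld}, and treat $D^i(|y|\,\partial_x\psi_+\,d^{-1})$ by noting $|y|\,\partial_x\psi_+\,d^{-1}$ is, up to the harmless bounded-derivative factor $\partial_x\psi_+$ and the factor $|y|$, of the same shape as $d^{-1}$ itself. Concretely: $\|D^{i}(d(x,y)^{-1})\|\le C\rho|y|^{\gamma-i}$ is the engine; multiplying by $|y|$ and applying Leibniz raises the exponent by $1$ and keeps $\rho$, giving $\|D^i(|y|\,d^{-1})\|\le C\rho|y|^{\gamma-i+1}$; multiplying further by a function with uniformly bounded $x$-derivatives (this is where the hypothesis "the derivatives of $\varphi_\pm,\psi_\pm$ are uniformly bounded with respect to $x$" enters) does not worsen the bound. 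The same argument gives the bound for $D^iA$, and for $D^iB=D^i\big((\alpha B_+^\ast+\alpha\varphi_++|y|\partial_y\varphi_+)d^{-1}\big)$ the leading term $\alpha B_+^\ast d^{-1}$ has no extra $|y|$, yielding the weaker exponent $\gamma-i$ in \eqref{DBk}, and one checks the remaining pieces $\alpha\varphi_+ d^{-1}$ and $|y|\partial_y\varphi_+ d^{-1}$ satisfy at least that bound using $\|D^{i}\varphi_+\|\le K|y|^{\gamma-\cdots}$ and the $|y|$ factor respectively.

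I would then observe that all the resulting bounds are finite on a neighborhood $\widetilde U$ of $D_0$: by Lemma \ref{ld}, $d$ and $\rho$ are defined there, and shrinking $\widetilde U$ we may assume $|y|\le 1$ so that the various $|y|$-exponents only help. Collecting the finitely many terms from each Leibniz expansion and taking $C$ to be the maximum of the constants obtained (and using Remark \ref{RP}(c) if any symmetrization appears, though here we are only differentiating ordinary products and compositions so it does not), we get the three displayed inequalities uniformly on $\widetilde U$, for every $0\le i\le k$.

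The main obstacle, and the only place requiring real care rather than bookkeeping, is matching the powers of $|y|$ exactly — in particular making sure the $|y|^{\gamma-i}$ coming from Lemma \ref{ld} is not double-counted against the $|y|^{\gamma-m}$ estimates of $(L_1)$, and confirming that the extra single factor of $|y|$ in $A$ and $C$ (but not $B$) is the sole source of the $+1$ discrepancy between the three estimates. One must also verify that $D^{i_1}|y|$ contributes $|y|^{1-i_1}$ and in particular is bounded (it is, for $i_1\le 1$, and zero otherwise), so the worst case in the $A$ and $C$ estimates is $i_1=0$, giving exactly $|y|^{\gamma-i+1}$. Once the exponent accounting is pinned down, the rest is a routine Leibniz-rule expansion together with the uniform-in-$x$ boundedness of the derivatives of $\varphi_\pm$, $\psi_\pm$ and the hypothesis $A_\pm^\ast\neq 0$ that keeps $\rho$ finite near $D_0$.
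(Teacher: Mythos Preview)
Your approach is exactly the paper's --- the paper's entire proof is the phrase ``as a consequence of Lemma \ref{ld} and Leibniz's rule'' --- and your structural observation that $A$ and $C$ carry an explicit extra factor $|y|$ relative to $B$ is the heart of the matter.

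One correction to your ``cleaner route'': the factor $\partial_x\psi_+$ does \emph{not} have uniformly bounded total derivatives; its $y$-derivatives obey only $\|\partial_y^{m}\partial_x\psi_+\|\le K|y|^{\gamma-m}$ from $(L_1)$, and these blow up near $y=0$ once $m>\gamma$. So the step ``multiplying further by a function with uniformly bounded $x$-derivatives does not worsen the bound'' is not justified as stated. The good news is that your original direct three-fold Leibniz expansion already works without modification: combining $|D^{i_1}(|y|)|\le |y|^{1-i_1}$, $\|D^{i_2}(\partial_x\psi_+)\|\le K|y|^{\gamma-i_2}$, and Lemma \ref{ld}'s $\|D^{i_3}(d^{-1})\|\le C\rho|y|^{\gamma-i_3}$ gives $\|D^iC\|\le C\rho|y|^{2\gamma-i+1}$. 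Since $\gamma>0$ and $|y|\le 1$, this is \emph{stronger} than the claimed $C\rho|y|^{\gamma-i+1}$; the ``extra $|y|^{\gamma}$'' you worried about is a bonus, not a defect, and no alternative route is needed. The same computation handles $A$, and for $B$ the term $\alpha B_+^\ast\, d^{-1}$ (with no extra $|y|$) indeed sets the exponent $\gamma-i$, exactly as you say.
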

\begin{corolario}\label{NC1}
Assume $T(x,y)=(F(x,y),G(x,y))$ is a map that satisfies Assumption \ref{Assumptions}($L_{1})$. Then, the following relation holds:
\begin{equation}
\norm{D^{k}T(a,b)}\leq const \norm{b}^{\alpha-k},
\end{equation}
in a neighborhood of $D_{0},$ where $\textbf{const}$  denotes a positive constant.
\end{corolario}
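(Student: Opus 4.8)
The plan is to prove the bound componentwise. Writing $T=(F,G)$, one has $\norm{D^{k}T(a,b)}\le\norm{D^{k}F(a,b)}+\norm{D^{k}G(a,b)}$, so it suffices to estimate each summand on the neighborhood $\widetilde U$ of $D_{0}$ on which Assumption~\ref{Assumptions}$(L_{1})$ provides the normal forms; say, for $b>0$, $F(x,y)=x_{+}^{\ast}+|y|^{\alpha}(B_{+}^{\ast}+\varphi_{+}(x,y))$ and $G(x,y)=y_{+}^{\ast}+|y|^{\alpha}(A_{+}^{\ast}+\psi_{+}(x,y))$. The case $b<0$ is word-for-word the same with the $-$ data, so I would treat only $b>0$. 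Throughout one may assume $\widetilde U$ small enough that $|y|\le 1$ on it.

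First I would record the elementary observation that $y\mapsto|y|^{\alpha}$ depends only on the last coordinate, so for $0\le j\le k$ its $j$-th total derivative at $(x,y)$ is the symmetric $j$-linear form sending $(e_{1},\dots,e_{j})$ to $\alpha(\alpha-1)\cdots(\alpha-j+1)\,|y|^{\alpha-j}$ times the product of the $y$-components of the $e_{i}$ (up to a sign for $y<0$); in particular $\norm{D^{j}(|y|^{\alpha})}\le C_{j}|y|^{\alpha-j}$ with $C_{j}$ depending only on $\alpha$ and $j$. Next, applying the higher-order Leibniz rule in $\R^{n+1}$ to the product $|y|^{\alpha}\cdot(B_{+}^{\ast}+\varphi_{+})$, the derivative $D^{k}F(x,y)$ is the finite sum over $0\le j\le k$, with binomial weights $\binom{k}{j}$, of the pairings of $D^{j}(|y|^{\alpha})$ with $D^{k-j}(B_{+}^{\ast}+\varphi_{+})$. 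For $j<k$ one has $k-j\ge1$, hence $D^{k-j}(B_{+}^{\ast}+\varphi_{+})=D^{k-j}\varphi_{+}$, and Assumption~$(L_{1})$ together with $|y|\le1$ gives $\norm{D^{k-j}\varphi_{+}(x,y)}\le\mathrm{const}\,|y|^{\gamma-(k-j)}$, the dominant contribution being the purely $y$-directional derivative since, among the bounds $K|y|^{\gamma-m}$ with $0\le m\le k-j$, the largest is attained at $m=k-j$ when $|y|\le1$. Thus the $j$-th term is bounded by $\mathrm{const}\,|y|^{\alpha-j}\,|y|^{\gamma-(k-j)}=\mathrm{const}\,|y|^{\alpha+\gamma-k}$, which, because $\gamma>k-1\ge0$ and $|y|\le1$, is $\le\mathrm{const}\,|y|^{\alpha-k}$. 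The remaining term $j=k$ is bounded by $\norm{D^{k}(|y|^{\alpha})}\,(\norm{B_{+}^{\ast}}+\norm{\varphi_{+}(x,y)})\le C_{k}|y|^{\alpha-k}(\norm{B_{+}^{\ast}}+K|y|^{\gamma})\le\mathrm{const}\,|y|^{\alpha-k}$. Summing the $k+1$ terms yields $\norm{D^{k}F(x,y)}\le\mathrm{const}\,|y|^{\alpha-k}$ on $\widetilde U$.

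The identical computation applied to $G(x,y)=y_{+}^{\ast}+|y|^{\alpha}(A_{+}^{\ast}+\psi_{+}(x,y))$, using the $(L_{1})$ bound on $D^{k-j}\psi_{+}$ in place of the one on $\varphi_{+}$, gives $\norm{D^{k}G(x,y)}\le\mathrm{const}\,|y|^{\alpha-k}$; adding the two estimates and noting $\norm{b}=|b|$ for $b\in\R$ yields $\norm{D^{k}T(a,b)}\le\mathrm{const}\,\norm{b}^{\alpha-k}$ in a neighborhood of $D_{0}$. The constant is uniform in $x$ because both inputs — the $(L_{1})$ estimates and the bound on $D^{j}(|y|^{\alpha})$ — have constants independent of $x$. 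I do not expect any genuine obstacle; the only step requiring care is the bookkeeping of mixed partials, namely that $x$-directional derivatives annihilate $|y|^{\alpha}$ and that the $(L_{1})$ bound $\norm{\partial^{l+m}\varphi_{\pm}/\partial x^{l}\partial y^{m}}\le K|y|^{\gamma-m}$ is dominated, for $|y|\le1$, by $K|y|^{\gamma-(l+m)}$, so that $D^{k-j}\varphi_{\pm}$ (and likewise $D^{k-j}\psi_{\pm}$) may be estimated as though all $k-j$ derivatives were taken in the $y$-direction.
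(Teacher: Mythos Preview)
Your proof is correct and follows precisely the route the paper indicates: the paper's own proof is a one-line remark that the result is ``a direct consequence of Assumption~\ref{Assumptions}(Eq.~\eqref{L1}) and Leibnitz rule,'' and you have simply written out that computation in full, applying Leibniz to the factorization $|y|^{\alpha}(B_{+}^{\ast}+\varphi_{+})$ and bounding each term with the $(L_1)$ estimates. No substantive difference in approach.
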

\begin{proof}
The proof is a direct consequence of Assumption \ref{Assumptions}(Eq. \eqref{L1}) and Leibnitz rule.
\end{proof}
\begin{lema}\label{CU123}
Let $A, B$ and $C$ be as in Definition \ref{A,B,C}. Assume that $\overline{\nu}\in \mathcal{A}_{L}$ is a $C^{k}$ functions and that $U_{1}^{k}(\overline{\nu},T,A,B,C),$ $U_{2}^{k}(\overline{\nu},T,A,B,C)$ and $U_{3}^{k}(\overline{\nu},T,A,B,C)$  as in Lemmas \ref{U1}, \ref{U2} and \ref{U3}, respectively . Then
 \begin{equation}\label{CU1}
 \displaystyle \lim_{(a,b)\to (x,0)}U_{1}^{k}(\overline{\nu},T,A,B,C)(a,b)=0,
 \end{equation}
  \begin{equation}\label{CU2}
 \displaystyle \lim_{(a,b)\to (x,0)}U_{2}^{k}(\overline{\nu},T,A,B,C)(a,b)=0,
 \end{equation}
  \begin{equation}\label{CU3}
 \displaystyle \lim_{(a,b)\to (x,0)}U_{3}^{k}(\overline{\nu},T,A,B,C)(a,b)=0,
 \end{equation}
for every $(x,0)\in D_{0}.$
\end{lema}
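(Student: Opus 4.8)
The plan is to prove \eqref{CU1}, \eqref{CU2} and \eqref{CU3} by showing that each summand appearing in the explicit formulas of Lemmas \ref{U1}, \ref{U2} and \ref{U3} tends to $0$ as $(a,b)\to(x,0)$, exploiting the fact that $\overline{\nu}\in\mathcal{A}_L$ is continuous with $\|\overline{\nu}\|\le L$ and $\overline{\nu}(x,0)=0$, and that the derivatives $D^i\overline{\nu}$ are continuous and hence locally bounded near $D_0$. The crucial ingredients are the estimates of Corollary \ref{DAC}, namely $\|D^iA\|,\|D^iC\|\le C\rho(x,y)|y|^{\gamma-i+1}$ and $\|D^iB\|\le C\rho(x,y)|y|^{\gamma-i}$, together with Corollary \ref{NC1}, $\|D^kT(a,b)\|\le \mathbf{const}\,|b|^{\alpha-k}$, and the fact from Lemma \ref{ld} that $\rho(x,y)$ has a finite limit as $(x,y)\to(a,0^{\pm})$, so $\rho$ stays bounded in a neighborhood of $D_0$. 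I would also record the elementary observations that $(1-\overline{\nu}\circ TB)^{-1}$ stays bounded near $D_0$ (since $\overline{\nu}\circ T B \to 0$ there, because $B$ is bounded and $\overline{\nu}\circ T\to 0$ as $T(a,b)$ approaches the fixed $x$-value while its $y$-component tends to $0$), and that $\overline{\nu}\circ TA-C\to 0$ near $D_0$.

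Next I would carry out the estimates term by term. For $U_1^1$ in \eqref{11.F}: the factor $(\overline{\nu}\circ TA-C)$ tends to $0$, the factor $(1-\overline{\nu}\circ TB)^{-2}$ is bounded, $\overline{\nu}\circ T\,DB$ is bounded by $L\cdot C\rho|y|^{\gamma}\to 0$ (using $\gamma>k-1\ge 0$), and $D\overline{\nu}\circ T\cdot DT\cdot B$ is bounded by $\|D\overline{\nu}\|_{\mathrm{loc}}\cdot\mathbf{const}\,|b|^{\alpha-1}\cdot\|B\|$; here $B=|y|^\alpha[B_\pm^\ast+\varphi_\pm]$ so $\|B\|\le \mathbf{const}\,|b|^\alpha$, giving a bound $\mathbf{const}\,|b|^{2\alpha-1}\to 0$ provided $\alpha>1/2$; more robustly one uses $\|B\|\le C\rho|y|$ type control (Corollary \ref{DAC} with $i=0$ gives $\|B\|\le C\rho|y|^{\gamma}$, but directly $\|B\|\lesssim|y|^\alpha$ suffices) — in any case the product tends to $0$. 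For general $k\ge 2$ in \eqref{1k.F}, each of the four summands carries the prefactor $(\overline{\nu}\circ TA-C)\to 0$ multiplied by a factor that remains bounded near $D_0$: the terms $\mathcal{DC}^{(k,k,k)}(\overline{\nu},T)B$, $\mathcal{DC}^{(k,1,k-1)}(\overline{\nu},T)$, $\mathcal{DCP}^{(k,0,k-1)}(\overline{\nu},T,B)$ and $\mathcal{DICP}^{(k,2,k)}(\overline{\nu},T,B)$ are, by Definitions \ref{DC}, \ref{DCP2}, \ref{DCPI1}, finite sums of products of $D^j\overline{\nu}\circ T$ (locally bounded), derivatives $D^{r}T$ (bounded by $\mathbf{const}\,|b|^{\alpha-r}$ via Corollary \ref{NC1}), derivatives $D^{i}B$ (bounded by $C\rho|y|^{\gamma-i}$ via Corollary \ref{DAC}), and bounded scalar factors $(1-\overline{\nu}\circ TB)^{-q}$; the total exponent on $|b|$ in each monomial is nonnegative because $\gamma>k-1$ forces $\gamma-i\ge 0$ for $i\le k-1$ and $\alpha>0$, so the products are bounded, and the prefactor kills them. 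The same bookkeeping handles \eqref{U2kF}: the $\mathcal{DC}$ and $\mathcal{DCP}$ blocks are bounded near $D_0$ while they are multiplied either by $(1-\overline{\nu}\circ TB)^{-1}$ (bounded) combined with $A$ (which $\to 0$ by Corollary \ref{DAC}, $i=0$) in the first term, or the terms $\mathcal{DC}^{(k,1,k-1)}(\overline{\nu},T)A$, $\mathcal{DCP}^{(k,0,k-1)}(\overline{\nu},T,A)$, $D^k(C)$ already carry a factor $A$, $D^{i}A$ or $D^k C$, each $\to 0$ by Corollary \ref{DAC}. Finally \eqref{U3}: $U_3^k$ is a finite sum of $\phi^{(q,k-q)}\big((\mathcal{DCP}^{(q,0,q)}(\overline{\nu},T,A)-D^q C),\ \mathcal{DICP}^{(k-q,1,k-q)}(\overline{\nu},T,B)\big)$; the first slot tends to $0$ because $\mathcal{DCP}^{(q,0,q)}(\overline{\nu},T,A)=D^q((\overline{\nu}\circ T)A)$ is a sum of monomials each containing some $D^i A\to 0$ (or the undifferentiated $A\to 0$) while the rest is bounded, and $D^q C\to 0$; the second slot $\mathcal{DICP}^{(k-q,1,k-q)}(\overline{\nu},T,B)=D^{k-q}(1-\overline{\nu}\circ TB)^{-1}$ is bounded near $D_0$ by the same monomial analysis (every $D^{r_j}(\overline{\nu}\circ TB)$ is bounded with a nonnegative $|b|$-exponent). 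Since $\phi^{(q,k-q)}$ is a fixed continuous bilinear map and $\mathrm{Sym}^k$ has norm $\le 1$, the whole expression tends to $0$.

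The step I expect to be the main obstacle is making rigorous the claim that the ``composite-derivative'' objects $\mathcal{DC}^{(k,1,k-1)}(\overline{\nu},T)$, $\mathcal{DCP}^{(k,0,k-1)}(\overline{\nu},T,B)$ and $\mathcal{DICP}^{(k,2,k)}(\overline{\nu},T,B)$ are uniformly bounded in a neighborhood of $D_0$: they involve the derivatives $D^rT$, which blow up like $|b|^{\alpha-r}$ as $b\to 0$, so one must check in each monomial that these negative powers are compensated by the positive powers $|y|^{\gamma-i}$ (from the $D^iB$ or $D^iA$ or $D^iC$ factors produced inside the $\mathcal{DCP}$/$\mathcal{DICP}$ structures) plus, where needed, the positive power contributed by the bound $\|A\|,\|B\|,\|C\|\lesssim|y|^{\alpha}$ or $|y|^{\gamma+1}$. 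This is where the hypothesis $\gamma>k-1$ in Assumption \ref{Assumptions}$(L_1)$ is used decisively, and it requires carefully tracking, for each of the finitely many multi-indices $(q,r_1,\dots,r_q)$ with $r_1+\cdots+r_q=k$, that the accumulated exponent on $|b|$ stays $\ge 0$; I would isolate this as a short auxiliary estimate (``every monomial in $\mathcal{DC}$, $\mathcal{DCP}$, $\mathcal{DICP}$ evaluated near $D_0$ is $O(|b|^{\varepsilon})$ for some $\varepsilon\ge 0$''), prove it once by induction on $k$ using Corollaries \ref{DAC} and \ref{NC1}, and then apply it uniformly to conclude \eqref{CU1}, \eqref{CU2}, \eqref{CU3}.
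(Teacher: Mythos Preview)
Your overall strategy coincides with the paper's: expand $U_1^k,U_2^k,U_3^k$ via Lemmas \ref{U1}--\ref{U3}, bound every monomial by a power of $|b|$ using Corollaries \ref{DAC} and \ref{NC1}, and conclude using $\gamma>k-1$ and $\alpha>0$.

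There is, however, one genuine gap in your decomposition ``prefactor $\to 0$ times a bounded remainder''. The remainder is \emph{not} bounded in general: in $U_1^k$ (and likewise in the first term of $U_2^k$) the piece $\mathcal{DC}^{(k,k,k)}(\overline{\nu},T)=k!\,(D^k\overline{\nu})\circ T\,(DT)^k$ contains only first derivatives of $T$, hence by Corollary \ref{NC1} is of order $|b|^{k(\alpha-1)}$, which blows up when $\alpha<1$ (a case allowed by Assumption \ref{Assumptions}). No $D^iA$, $D^iB$, $D^iC$ factor occurs inside this particular block to compensate, so your proposed auxiliary estimate (``every monomial in $\mathcal{DC}$, $\mathcal{DCP}$, $\mathcal{DICP}$ is $O(|b|^{\varepsilon})$ with $\varepsilon\ge0$'') is false as stated, and an induction on $k$ will not salvage it.

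The fix, which is exactly what the paper does, is to use the prefactor \emph{quantitatively} rather than just qualitatively: $\|\overline{\nu}\circ T\,A-C\|\le L\|A\|+\|C\|\le \mathbf{const}\,|b|^{\gamma+1}$ by Corollary \ref{DAC} with $i=0$, and this positive power is what absorbs the blow-up of $(DT)^k$. Combining, the worst monomial in $U_1^k$ is controlled by $|b|^{\gamma+1+k(\alpha-1)}$, whose exponent is positive since $\gamma+1>k$ and $\alpha>0$; the paper records this (up to arithmetic typos) as \eqref{U1Ck21}. So replace your two-step ``$0\times$bounded'' scheme by a single bookkeeping of the total $|b|$-exponent of each full monomial (prefactor included), exactly as in \eqref{U1Ck2}--\eqref{U1Ck21}; the rest of your outline then goes through unchanged.
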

\begin{proof}
The result is easy to prove for $k=1$. We prove the result for the case $k\geq 2.$

By Lemma \ref{U1}($k \geq 2$), Definition \ref{DAL} and  Remark \ref{RP}(iii) we have
\begin{eqnarray}\label{U1Ck2}
\norm{U^{k}_{1}(\overline{\nu},T,A,B,C)}&\leq& \displaystyle\frac{(L\norm{A(a,b)}+\norm{C(a,b)})}{(1-L\norm{B})^{2}}\norm{k!\mathcal{DC}^{(k,k,k)}(\overline{\nu},T)(a,b)}\norm{B}\nonumber\\
&+&\frac{(L\norm{A(a,b)}+\norm{C(a,b)})}{(1-L\norm{B})^{2}}\norm{k!\mathcal{DC}^{(k,1,(k-1))}(\overline{\nu},T)(a,b)}\nonumber\\ &+&\frac{(L\norm{A(a,b)}+\norm{C(a,b)})}{(1-L\norm{B})^{2}}\norm{\mathcal{DCP}^{(k,0,(k-1))}(\overline{\nu},T,B)(a,b)}\nonumber \\
&+&\displaystyle (L\norm{A(a,b)}+\norm{C(a,b)})\norm{\mathcal{DICP}^{(k,2,k)}(\overline{\nu},T,B)(a,b)}. \nonumber\\
\end{eqnarray}
To estimate the first expression of \eqref{U1Ck2}. From  \eqref{Dkkk} and norm properties we have
\begin{equation}\label{U1Ck4}
\norm{\mathcal{DC}^{(k,k,k)}(\overline{\nu},T)(a,b)}\leq  \norm{k!(D^{k}\overline{\nu}) \circ T}\norm{DT(a,b)}^{k}.
\end{equation}
Since $\nu$ is of class $C^{k},$ and by using Corollary \ref{NC1} we get
 \begin{equation}\label{U1Ck7}
\norm{\mathcal{DC}^{(k,k,k)}(\overline{\nu},T)(a,b)}\leq const|b|^{\gamma-k}.
\end{equation}
Whence, in view of  Corollary \ref{DAC} we get
\begin{equation}\label{U1Ck71}
\displaystyle\frac{(L\norm{A(a,b)}+\norm{C(a,b)})}{(1-L\norm{B})^{2}}\norm{k!\mathcal{DC}^{(k,k,k)}(\overline{\nu},T)(a,b)} \leq const \frac{|b|^{\alpha-k+\gamma+1}}{(1-L\norm{B})^{2}}.
\end{equation}
By similar arguments one can  estimates remaining expressions of \eqref{U1Ck2} to obtain
\begin{equation}\label{U1Ck101}
\frac{(L\norm{A(a,b)}+\norm{C(a,b)})}{(1-L\norm{B})^{2}}\norm{k!\mathcal{DC}^{(k,1,(k-1))}(\overline{\nu},T)(a,b)}\leq |b|^{\alpha-k+ \gamma+1}(1-L\norm{B})^{2}const.
\end{equation}
\begin{equation}\label{U1Ck131}
\frac{(L\norm{A(a,b)}+\norm{C(a,b)})}{(1-L\norm{B}_{D})^{2}}\norm{\mathcal{DCP}^{(k,0,(k-1))}(\overline{\nu},T,B)(a,b)}\leq \frac{|b|^{\gamma-k+1}}{(1-L\norm{B}_{D})^{2}}.
\end{equation}
\begin{equation}\label{U1Ck171}
\displaystyle (L\norm{A(a,b)}+\norm{C(a,b)})\norm{\mathcal{DICP}^{(k,2,k)}(\overline{\nu},T,B)(a,b)}\leq const |b|^{\alpha-k +\gamma+1}.
\end{equation}

Therefore, combining the four estimates \eqref{U1Ck171}, \eqref{U1Ck131}, \eqref{U1Ck101} and \eqref{U1Ck71} with \eqref{U1Ck2} we obtain
\begin{equation}\label{U1Ck21}
\norm{U^{k}_{1}(\overline{\nu},T,A,B,C)}\leq const|b|^{\alpha-k +\gamma+1}.
\end{equation}
Hence, since $\gamma>k-1$ and $\alpha>0$ (see Assumption $\ref{Assumptions}(L_{1}$)) we reach that
\begin{equation*}
\displaystyle \lim_{(a,b)\to (x,0)}\norm{U^{k}_{1}(\overline{\nu},T,A,B,C)(a,b)}=0.
\end{equation*}
Repeating  the same procedure followed to deduce estimate \eqref{CU1} , we get estimates \eqref{CU2} and \eqref{CU3}. Thus, we conclude the proof of corollary.
\end{proof}
\begin{proof}[\textbf{Proof of Proposition \ref{Di1}}]
This is a direct consequence of Lemma  \ref{CU123}.
\end{proof}
\section{Proof of Proposition \ref{propstep2}}\label{sec:step2}
The proof of Proposition \ref{propstep2} was influenced by the ideas contained in the articles \cite[p. 313]{Robinson} and \cite[Eq. (3)]{Shashkov}. The proof is quite long and technical, so we divide it into two steps. Before that, we give  the following definition.
\begin{definicao}\label{Dis2}
We define the set $D_{i}$ of all the continuous functions $\nu_{i}: D \to L^{i}_{s}(\R^{n+1},\R^{n})$ such that $\nu_{i}(x,0)=0,$ for all $(x,0) \in D_{0}$ that is,
\begin{equation*}
\mathcal{D}_{i}:=\{\nu_{i}: D \to L^{i}_{s}(\R^{n+1},\R^{n}):\nu_{i}(x,0)=0, \quad \mbox{for all} \quad (x,0) \in D_{0}; \quad
\mbox{$v_{i}$ is continuous}\},
\end{equation*}
for every $1\leq i \leq k,$ and $\mathcal{D}_{0}:=\mathcal{A}_{L}.$
\end{definicao}
The proof of Proposition  \ref{propstep2} is somewhat lengthy, so we divide it into two parts. \textbf{In the first part:} we show the
\emph{existence of functions $\Psi^{i}:\mathcal{D}_{0}\times \mathcal{D}_{1}\times \ldots \times \mathcal{D}_{i}\to D_{i},$ }so that $D^{i}(\Gamma(\overline{\nu}_{0}))=\Psi^{i}(\overline{\nu}_{0},D(\overline{\nu}_{0}),\ldots,D^{i}(\overline{\nu}_{0})),$ for all $0\leq i \leq k.$ \textbf{In the second part:} we show that  the function $\widetilde{N}_{i}:\mathcal{D}_{0}\times \mathcal{D}_{1}\times \ldots \times \mathcal{D}_{i} \to \mathcal{D}_{0}\times \mathcal{D}_{1}\times \ldots \times \mathcal{D}_{i} $ given by $\widetilde{N}_{i}(\overline{\nu}_{0},\overline{\nu}_{1},\ldots,\overline{\nu}_{i})=(\Gamma(\overline{\nu}_{0}),\Psi^{1}(\overline{\nu}_{0},\overline{\nu}_{1}),\ldots,\Psi^{i}(\overline{\nu}_{0},\overline{\nu}_{1},\ldots,\overline{\nu}_{i}))$
\emph{have a global attracting fixed point $(A_{0},A_{1},\ldots,A_{i}),$}
for all $0\leq i \leq k.$

\subsection{Part 1: Defining the functions $\Psi^{i}$} We start by defining a generalization of the function $U^{k}_{1}$ (see Corollary \ref{U1}).
\begin{definicao}\label{U1.N}
Let $1\leq i\leq k$ be a integer. Let $D_{0}$ and $\mathcal{D}_{i}$ be sets as in Definition \ref{Dis2}. We define the function
$ U^{i}_{1}:\mathcal{D}_{0}\times \mathcal{D}_{1}\times \ldots \times \mathcal{D}_{i}\to D_{i}$ given by
\begin{equation}\label{11.N}
 U_{1}^{1}(\overline{\nu}_{0},\overline{\nu}_{1})=(\overline{\nu}_{0}\circ TA-C)(1-\overline{\nu}_{0} \circ T B)^{-2}\left(\overline{\nu}_{0}\circ TDB+\overline{\nu}_{1}\circ T.DT.B\right),
 \end{equation}
 for $i\geq 2$
\begin{eqnarray}\label{U1i.N}
U^{i}_{1}(\overline{\nu}_{0},\ldots,\overline{\nu}_{i})&=&\frac{(\overline{\nu}_{0}\circ TA-C)i!}{(1-\overline{\nu}_{0}\circ TB)^{2}}Sym^{i}\circ \mathcal{DC}^{(i,i,i)}(\overline{\nu}_{i},T)B\nonumber\\
&+&\frac{(\overline{\nu}_{0}\circ TA-C)i!}{(1-\overline{\nu}_{0}\circ TB)^{2}}Sym^{i}\circ \mathcal{DC}^{(i,1,(i-1))}(\overline{\nu}_{1},\ldots,\overline{\nu}_{(i-1)},T)\nonumber\\
&+&\frac{(\overline{\nu}_{0}\circ TA-C)i!}{(1-\overline{\nu}_{0}\circ TB)^{2}}Sym^{i}\circ \mathcal{DCP}^{(i,0,(i-1))}(\overline{\nu}_{0},\ldots,\overline{\nu}_{(i-1)},T,B)
\nonumber \\
&+&\displaystyle (\overline{\nu}_{0}\circ TA-C)\mathcal{DICP}^{(i,2,i)}(\overline{\nu}_{0},\ldots,\overline{\nu}_{(i-1)},T,B),
\end{eqnarray}
where $\mathcal{DC}^{(k_{1},k_{2},k_{3})}(\overline{\nu},T),$ $\mathcal{DCP}^{(k_{1},k_{2},k_{3})}(\overline{\nu},T,B)$ and $\mathcal{DICP}^{(k_{1},k_{2},k_{3})}(\overline{\nu},T,B)$  as in Definitions \ref{DC}, \ref{DCP2} and \ref{DCPI1}, respectively.
\end{definicao}
Next, we define a generalization of the function $U^{k}_{2}$ (see Corollary \ref{U2}).
\begin{definicao}\label{U2.N}
Let $1\leq i\leq k$ be a integer. Let $D_{0},$ $\mathcal{D}_{i}$ be sets as in Definition \ref{Dis2}. We define the function
$ U^{i}_{2}:\mathcal{D}_{0}\times \mathcal{D}_{1}\times \ldots \times \mathcal{D}_{i}\to D_{i}$  given by
\begin{equation}\label{U21.N}
U_{2}^{1}(\nu_{0},\nu_{1},T)=(\nu_{1}\circ TDTA+\nu_{0}\circ TDA-DC)(1-\overline{\nu}_{0}\circ T B)^{-1},
\end{equation}
for $i\geq 2$
\begin{eqnarray}\label{U2i.N}
(U_{2}^{i})(\overline{\nu}_{0},\overline{\nu}_{1},\ldots,\overline{\nu}_{i})&=& \displaystyle (1-\overline{\nu}_{0}\circ T B)^{-1}Sym^{i}\circ \left(\mathcal{DC}^{(i,i,i)}(\overline{\nu}_{i},T)A\right)\nonumber\\&+&
(1-\overline{\nu}_{0}\circ T B)^{-2}Sym^{i}\circ \left(\mathcal{DC}^{(i,1,i-1)}(\overline{\nu}_{1}, \ldots, \overline{\nu}_{i-1},T)A-D^{i}(C)\right)\nonumber\\
&+&(1-\overline{\nu}_{0}\circ T B)^{-2}Sym^{i}\circ \left(\mathcal{DCP}^{(i,0,i-1)}(\overline{\nu}_{0},\overline{\nu}_{1},\ldots,\overline{\nu}_{(i-1)},T,A)\right)
\end{eqnarray}
where $\mathcal{DC}^{(k_{1},k_{2},k_{3})}(\overline{\nu},T)$ and $\mathcal{DCP}^{(k_{1},k_{2},k_{3})}(\overline{\nu},T,B),$ as in Definitions \ref{DC} and \ref{DCP2}, respectively.
\end{definicao}
Next, we define a generalization of the function $U^{k}_{3}$ (see Corollary \ref{U3}).
\begin{definicao}\label{U3.N}
Let $2\leq i\leq k$ be a integer. Let $D_{0},$ $\mathcal{D}_{i}$ be sets as in Definition \ref{Dis2}. We define the function
$U^{i}_{3}:\mathcal{D}_{0}\times \mathcal{D}_{1}\times \ldots \times \mathcal{D}_{i}\to D_{i}$ given by
\begin{multline}
(U_{3}^{i})(\overline{\nu}_{0},..,\overline{\nu}_{i})=\\
Sym^{i}\circ\sum_{q=1}^{i-1}{i \choose q}\phi^{(q,i-q)}(\mathcal{DCP}^{(q,0,q)}(\overline{\nu}_{0},..,\overline{\nu}_{q},T,A)-D^{i}C,\mathcal{DICP}^{(i-q,1,i-q)}(\overline{\nu}_{0},..,\overline{\nu}_{i-q-1},T,B)),
\end{multline}
where $\mathcal{DCP}^{(k_{1},k_{2},k_{3})}(\overline{\nu},T,B),$ $\mathcal{DICP}^{(k_{1},k_{2},k_{3})}(\overline{\nu},T,B)),$ $\phi^{(q,i-q)}$ as in Definitions  \ref{DCP2}, \ref{DCPI1} and \ref{MM1}, respectively.
\end{definicao}
Next, we define a generalization of the function $D^{k}\Gamma(\nu)$ (see Lemma \ref{MC}).
\begin{definicao}\label{psi}
Let $1\leq i\leq k$ be a integer. Let $D_{0},$ $\mathcal{D}_{i}$ be sets as in Definition \ref{Dis2}, and assume the functions $U_{1}^{i}, U_{2}^{i}$ and $U_{3}^{i}$ as in Definitions \ref{U1.N}, \ref{U2.N} and \ref{U3.N}, respectively. We define the function
$\Psi^{i}:\mathcal{D}_{0}\times \mathcal{D}_{1}\times \ldots \times \mathcal{D}_{i}\to D_{i}$ given by
\begin{equation}
\Psi^{1}(\overline{\nu}_{0},\overline{v}_{1}, \overline{\nu}_{2})=(U_{1}^{1}+U_{2}^{1})(\overline{\nu}_{0},\overline{\nu}_{1}),
\end{equation}
and
for $i\geq2$
\begin{equation}
\Psi^{i}(\overline{\nu}_{0},\overline{\nu}_{1},\ldots,\overline{\nu}_{i})=(U_{1}^{i}+U_{2}^{i}+U_{3}^{i})(\overline{\nu}_{0},\overline{\nu}_{1},\ldots,\overline{\nu}_{i}).
\end{equation}
\end{definicao}
\begin{observacao}
The functions $\Psi^{i}$ for the cases $i=1$ and $i=2$ were established in  \cite[Eq. (13)]{Shashkov} and  \cite[Eq. (42)]{Pacifico} respectively.
\end{observacao}
\begin{prop}\label{wdpsi}
Let $1\leq i\leq k$ be a integer. Then, the function $\Psi^{i}$ given in Definition \ref{psi} is well-defined. Moreover, if $\overline{\nu}_{0}\in \mathcal{A}_{L}$ is of class $C^{i},$ then
\begin{equation}\label{psi1}
\Psi^{i}(\overline{\nu}_{0},D\overline{\nu}_{0},\ldots,D^{i}\overline{\nu}_{0})=D^{i}\Gamma(\overline{\nu}_{0}).
\end{equation}
\end{prop}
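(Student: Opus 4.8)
The plan is to prove Proposition \ref{wdpsi} by induction on $i$, or more precisely by peeling off the two assertions separately: (1) well-definedness of $\Psi^{i}$, i.e. that $\Psi^{i}$ actually lands in $\mathcal{D}_{i}$ (continuity plus vanishing on $D_{0}$), and (2) the identity $\Psi^{i}(\overline{\nu}_{0},D\overline{\nu}_{0},\ldots,D^{i}\overline{\nu}_{0})=D^{i}\Gamma(\overline{\nu}_{0})$ when $\overline{\nu}_{0}$ is of class $C^{i}$. For (2), first I would treat $y\neq 0$: there the identity is essentially a bookkeeping exercise, because Definitions \ref{U1.N}, \ref{U2.N}, \ref{U3.N} are obtained from the formulas in Lemmas \ref{U1}, \ref{U2}, \ref{U3} simply by substituting the abstract symbols $\overline{\nu}_{1},\ldots,\overline{\nu}_{i}$ for the concrete derivatives $D\overline{\nu}_{0},\ldots,D^{i}\overline{\nu}_{0}$. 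So for $y\neq 0$ one sets $\overline{\nu}_{j}=D^{j}\overline{\nu}_{0}$ and invokes Lemma \ref{MC} (together with Remark \ref{DCE}, which identifies $\mathcal{DC}$, $\mathcal{DCP}$, $\mathcal{DICP}$ with the honest iterated derivatives of the composed/product/inverted expressions). This reduces (2) on $D^{*}$ to citing Lemma \ref{MC} verbatim.

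The substantive point is then to extend the identity across $D_{0}$ and simultaneously establish (1). Here I would argue as follows. Each of $U_{1}^{i},U_{2}^{i},U_{3}^{i}$ is, away from $D_{0}$, a finite sum of products of: bounded scalar factors $(1-\overline{\nu}_{0}\circ TB)^{-m}$ (bounded because $\|\overline{\nu}_{0}\|\le L$ and $(L_{2})$ forces $\|LB\|<1$), the bounded vector/row factors $\overline{\nu}_{0}\circ TA-C$ and $A$, and the $\mathcal{DC}$/$\mathcal{DCP}$/$\mathcal{DICP}$ terms. The $\mathcal{DC}$-type terms involve $D^{j}\overline{\nu}_{0}$ (continuous up to $D_0$ by hypothesis, since $\overline\nu_0$ is $C^i$ — and if we only assume $\overline\nu_j\in\mathcal D_j$ continuous, they are continuous by definition) composed with $T$ and multiplied by powers of $DT$, $D^{r}T$, $D^{r}A$, $D^{r}B$, $D^{r}C$. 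By Corollary \ref{NC1} the derivatives of $T$ blow up at worst like $|y|^{\alpha-k}$, while by Corollary \ref{DAC} the derivatives of $A$, $B$, $C$ carry compensating factors $\rho(x,y)|y|^{\gamma-i+1}$ (resp.\ $|y|^{\gamma-i}$), and $\rho$ has a finite limit at $D_{0}$ by Lemma \ref{ld}. This is exactly the computation already carried out in Lemma \ref{CU123}: it shows each of $U_{1}^{i},U_{2}^{i},U_{3}^{i}$ tends to $0$ as $(a,b)\to(x,0)$, with an explicit rate $|b|^{\alpha-i+\gamma+1}\to 0$ (using $\gamma>k-1\ge i-1$ and $\alpha>0$). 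Thus $\Psi^{i}$ extends continuously to $D$ with value $0$ on $D_{0}$, i.e.\ $\Psi^{i}$ maps into $\mathcal{D}_{i}$, which is (1); and the extended function agrees with $D^{i}\Gamma(\overline{\nu}_{0})$ on $D^{*}$.

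Finally, to close (2) on $D_{0}$ itself, I would observe that when $\overline{\nu}_{0}$ is $C^{i}$ we already know from Proposition \ref{Di1}(b) that $\Gamma(\overline{\nu}_{0})$ is $C^{i}$ with $D^{i}\Gamma(\overline{\nu}_{0})(x,0)=0$; hence both sides of \eqref{psi1} are continuous on all of $D$, agree on the dense set $D^{*}$, and agree (both being $0$) on $D_{0}$, so they coincide everywhere. The symmetry of the right-hand side, needed for $\Psi^{i}$ to land in $L^{i}_{s}(\R^{n+1},\R^{n})$ rather than merely $L^{i}$, is guaranteed by the outermost $Sym^{i}$ in Definition \ref{psi} together with Remark \ref{RP}(a). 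The main obstacle is none of the individual steps but the sheer bookkeeping: one must check that the $\mathcal{DC}$/$\mathcal{DCP}$/$\mathcal{DICP}$ terms appearing in Definitions \ref{U1.N}--\ref{U3.N} are precisely the ones produced by Leibniz and the chain rule applied to $(1-\overline{\nu}_{0}\circ TB)^{-1}$, $(\overline{\nu}_{0}\circ TA-C)$ and their product — i.e.\ that the abstract substitution $\overline{\nu}_{j}\leftrightarrow D^{j}\overline{\nu}_{0}$ is consistent across all three pieces and all index ranges $k_{2}\le n\le k_{3}$ — and that the $\gamma>k-1$ estimate survives the worst term (the $\mathcal{DC}^{(k,k,k)}$ term, which carries the fewest $A,B,C$-derivatives and hence the least decay). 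These are exactly the verifications performed in Lemmas \ref{MC} and \ref{CU123}, so the proof of Proposition \ref{wdpsi} is essentially an application of those two results term by term.
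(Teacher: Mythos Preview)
Your proposal is correct and follows essentially the same route as the paper's proof: continuity of $\Psi^{i}$ on $D^{*}$ is immediate from the definition, the vanishing limit on $D_{0}$ is obtained by repeating the estimates behind Lemma \ref{CU123} (what the paper phrases as ``analysis similar to that in the proof of Proposition \ref{Di1}''), and the identity \eqref{psi1} is read off from Lemma \ref{MC} after the substitution $\overline{\nu}_{j}=D^{j}\overline{\nu}_{0}$. Your added remarks on closing the identity across $D_{0}$ via Proposition \ref{Di1}(b) and on symmetry via $Sym^{i}$ are sound and simply make explicit points the paper leaves implicit.
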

\begin{proof}
To prove that the function  $\Psi^{i}$ is well-defined, it suffices to show that
\begin{equation}\label{psi.2}
\Psi^{i}(\overline{\nu}_{0},\ldots,\overline{\nu}_{i}) \in \mathcal{D}_{i}, \quad \mbox{for all} \quad   \overline{\nu}_{j} \in \mathcal{D}_{j}, 0\leq j\leq 1.
\end{equation}
That is, by Definition \ref{Dis2} we must to show that
\begin{itemize}\label{psi.3}
\item[(a)]$\Psi^{i}(\overline{\nu}_{0},\ldots,\overline{\nu}_{i})$ is continuous on $D$
and
\item[(b)]$\Psi^{i}(\overline{\nu}_{0},\ldots,\overline{\nu}_{i})(x,0)=0,$ for every $x\in \R^{n},$ $\norm{x}\leq 1,$
\end{itemize}
for all  $\overline{\nu}_{j} \in \mathcal{D}_{j}, 0\leq j\leq 1.$ Indeed, by Definition \ref{psi} we have that $\Psi^{i}(\overline{\nu}_{0},\ldots,\overline{\nu}_{i})$ is continuous on $D^{*},$ so it remains to show the continuity of $\Psi^{i}(\overline{\nu}_{0},\ldots,\overline{\nu}_{i})$ at the points $(x,0) \in D_{0}.$ Analysis similar to that in the proof of  Proposition \ref{Di1} shows that
\begin{equation}\label{psi(x,0)}
\displaystyle \lim_{(a,b)\to (x,0)}\psi^{i}(\overline{\nu}_{0},\ldots,\overline{\nu}_{i})(a,b)=0,
\end{equation}
for all $(x,0)\in D_{0}.$  Therefore, if we define
  \begin{equation*}
 W^{i}(x,y)=\begin{cases}
\Psi^{i}(\overline{\nu}_{0},\ldots,\overline{\nu}_{i})(x,y),& y\neq 0,\\
0, & y=0,
\end{cases}
\end{equation*}
then, we get a continuous extension of $\Psi^{i}(\overline{\nu}_{0},\ldots,\overline{\nu}_{i})$ on $D,$ which completes the proof of \eqref{psi.3}, so $\Psi^{i}(\overline{\nu}_{0},\ldots,\overline{\nu}_{i}) \in \mathcal{D}_{i}.$
Therefore $\Psi^{i}$ is well-defined. The equality in Eq. \eqref{psi1} follows from Definition \ref{psi} and Lemma \ref{MC}. This concludes the proof.
\end{proof}
\subsection{Part 2: The function $\widetilde{N}_{i}$}
In this sub-section will be shown the following proposition.
\begin{prop}\label{l3}
Let $1\leq i\leq k$ be a integer. Let $\Psi^{j}, 1 \leq j \leq i$ be functions as in Definition \ref{psi}. Then the function
\begin{equation}\label{Ni}
\widetilde{N}_{i}:\mathcal{D}_{0}\times \mathcal{D}_{1}\times \ldots \times \mathcal{D}_{i} \to \mathcal{D}_{0}\times \mathcal{D}_{1}\times \ldots \times \mathcal{D}_{i}
\end{equation}
given by
\begin{equation}
\widetilde{N}_{i}(\overline{\nu}_{0},\overline{\nu}_{1},\ldots,\overline{\nu}_{i})=(\Gamma(\overline{\nu}_{0}),\Psi^{1}(\overline{\nu}_{0},\overline{\nu}_{1}),\ldots,\Psi^{i}(\overline{\nu}_{0},\overline{\nu}_{1},\ldots,\overline{\nu}_{i})),
\end{equation}
have a global attracting fixed point $(A_{0},A_{1},\ldots,A_{i}).$
\end{prop}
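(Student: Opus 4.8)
The plan is to realize $\widetilde N_i$ as a fibered contraction over the base contraction $\Gamma$ and then invoke the standard fiber-contraction theorem. Concretely, $\widetilde N_i(\overline\nu_0,\ldots,\overline\nu_i)=(\Gamma(\overline\nu_0),\Psi^1(\overline\nu_0,\overline\nu_1),\ldots,\Psi^i(\overline\nu_0,\ldots,\overline\nu_i))$ has a triangular structure: the zeroth component depends only on $\overline\nu_0$ and is the contraction $\Gamma$ of Proposition \ref{fgamma}(b) with fixed point $\nu^*=A_0$; inductively, the $j$-th component $\Psi^j(\overline\nu_0,\ldots,\overline\nu_j)$ is, by Definition \ref{psi}, of the form $L_j(\overline\nu_0,\ldots,\overline\nu_{j-1})\cdot \overline\nu_j + (\text{terms in }\overline\nu_0,\ldots,\overline\nu_{j-1})$, where the coefficient of the top-order variable $\overline\nu_j$ comes only from the $\mathcal{DC}^{(j,j,j)}$ pieces in $U_1^j$ and $U_2^j$ (the $U_3^j$ piece and the $\mathcal{DC}^{(j,1,j-1)}$, $\mathcal{DCP}$, $\mathcal{DICP}$ pieces involve only lower-order $\overline\nu_\ell$, $\ell\le j-1$). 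So I would first extract this top-order coefficient and show its sup-norm is bounded by the quantity appearing on the left side of the inequality $(L_3)$(b) (resp. $(L_3)$(a) when $j=1$), which by Assumption \ref{Assumptions} is strictly less than $1$.

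The key steps, in order: \textbf{(1)} For fixed $\overline\nu_0,\ldots,\overline\nu_{j-1}$, show the partial map $\overline\nu_j\mapsto \Psi^j(\overline\nu_0,\ldots,\overline\nu_{j-1},\overline\nu_j)$ is a contraction of $\mathcal D_j$ with a Lipschitz constant $\lambda_j<1$ \emph{uniform} in the lower variables; this reduces, via Remark \ref{RP}(c) ($\|Sym^k\|\le 1$), Eq. \eqref{Dkkk}, and the bounds $\|A\|_D$, $\|B\|_D$, $\|C\|_D$, $\|\partial_y G\|_D$, to exactly the numerical inequality in $(L_3)$(b). \textbf{(2)} Check $\widetilde N_i(\mathcal D_0\times\cdots\times\mathcal D_i)\subset \mathcal D_0\times\cdots\times\mathcal D_i$; component $0$ is Proposition \ref{fgamma}(a), and components $1,\ldots,i$ are Proposition \ref{wdpsi} (well-definedness of $\Psi^j$, which in turn uses the limit computation \eqref{psi(x,0)} that each $\Psi^j$ extends continuously by $0$ on $D_0$). \textbf{(3)} Verify continuity of each $\Psi^j$ jointly in all its arguments with respect to the sup-norm — this follows since $\Psi^j$ is built from $\Gamma$-type algebraic operations (composition with the fixed $C^k$ map $T$, multiplication, and inversion of the nowhere-vanishing $1-\overline\nu_0\circ T\,B$) together with the fixed $C^k$ data $A,B,C$. \textbf{(4)} Apply the fiber-contraction theorem (Hirsch–Pugh–Shub): since $\Gamma$ has the global attracting fixed point $A_0=\nu^*$ on $\mathcal D_0$ and each fiber map is a uniform contraction with continuous dependence on the base and fiber points, $\widetilde N_i$ has a global attracting fixed point. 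The fixed point is obtained inductively: given that $\Gamma^n(\overline\nu_0)\to A_0$ and the first $j-1$ fiber coordinates converge, define $A_j$ as the fixed point of $\overline\nu_j\mapsto \Psi^j(A_0,A_1,\ldots,A_{j-1},\overline\nu_j)$, and show $\Psi^{j}$-iterates converge to it by a telescoping/uniform-contraction estimate, so $\widetilde N_i^n(\overline\nu_0,\ldots,\overline\nu_i)\to(A_0,A_1,\ldots,A_i)$ for every starting tuple.

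The main obstacle will be Step (1): getting the contraction constant for the top-order fiber map to be \emph{uniformly} bounded by the $(L_3)$ quantity, independently of the lower-order arguments $\overline\nu_0,\ldots,\overline\nu_{j-1}\in\mathcal D_\ell$. This requires showing that the factor $(1-\overline\nu_0\circ T\,B)^{-1}$ and the term $\overline\nu_0\circ T A - C$ appearing in front of the $\mathcal{DC}^{(j,j,j)}$ block are controlled purely in terms of $\|A\|_D,\|B\|_D,\|C\|_D$ and $L$ — here one uses $\|\overline\nu_0\|\le L$ with $L$ as in \eqref{L}, which was precisely chosen (via $(L_2)$) so that $1-L\|B\|_D>0$ and $L\|A\|_D+\|C\|_D$ simplifies against the denominator $(1-L\|B\|_D)^2$ to reproduce the factor $\bigl(1+\|A\|_D+\sqrt{(1-\|A\|_D)^2-4\|B\|_D\|C\|_D}\bigr)^{-2}$ seen in the denominators of $(L_3)$. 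Once this algebraic identity is in hand, the uniform contraction estimate and hence the whole fibered-contraction argument go through, and the auxiliary continuity and invariance checks (Steps 2–3) are routine given Propositions \ref{fgamma} and \ref{wdpsi}.
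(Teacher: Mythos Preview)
Your overall architecture is correct and matches the paper: induction on $i$ via the Fiber Contraction Theorem, with the observation that $\Psi^{j}$ is affine in the top variable $\overline{\nu}_{j}$ and the top-order coefficient comes solely from the $\mathcal{DC}^{(j,j,j)}$ blocks in $U_{1}^{j}$ and $U_{2}^{j}$. Steps (2)--(4) are exactly what the paper does (Proposition~\ref{wdpsi}, Lemma~\ref{l3.0}, and the inductive application of Theorem~\ref{FCT}).

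The gap is in Step (1), and you have misidentified the ``main obstacle.'' The algebraic simplification you flag---rewriting $(1-L\|B\|_{D})^{2}$ via \eqref{L} to produce the denominator $\bigl(1+\|A\|_{D}+\sqrt{(1-\|A\|_{D})^{2}-4\|B\|_{D}\|C\|_{D}}\bigr)^{2}$---is correct but routine. What you skip is how the factor $\max_{m+n=j}\{(\|A\|_{D}+\|B\|_{D})^{m}(\|C\|_{D}+1)^{n}\}$ in $(L_{3})$ arises at all. By \eqref{Dkkk1} the difference $\Psi^{j}(\ldots,\mu^{1})-\Psi^{j}(\ldots,\mu^{2})$ is governed by the operator $M^{j}(b)=b(\widehat{DT}\,\cdot,\ldots,\widehat{DT}\,\cdot)$ on $L^{j}(\R^{n+1},\R^{n})$; with the ambient operator norm you only get $\|\widehat{DT}\|^{j}$, which for a generic norm on $\R^{n+1}$ does not equal that $\max$. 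The paper handles this by \emph{constructing a new equivalent norm} $|\cdot|_{i}$ on $L^{i}(\R^{n+1},\R^{n})$ (Lemma~\ref{|Mib|}): one decomposes $b=\sum_{f}b_{f}$ along the $2^{i}$ coordinate blocks of $(\R^{n}\times\R)^{i}$, builds the positive $2^{i}\times 2^{i}$ matrix $\Delta=[c_{g,f}]$ of block-norm products from $\|A\|_{D},\|B\|_{D},\|C\|_{D},1$, applies Perron--Frobenius to obtain a positive eigenvector $(k_{f})$, and sets $|b|_{i}=\sum_{f}k_{f}\|b_{f}\|$. In this norm one gets $|M^{i}(b)|_{i}\le\lambda|b|_{i}$ with $\lambda$ bounded by exactly the $\max$ in $(L_{3})$, and only then does the Lipschitz constant of $\Psi^{j}(\overline{\nu}_{0},\ldots,\overline{\nu}_{j-1},\bullet)$ reduce to $\Theta(j)<1$. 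Your sentence ``this reduces \ldots\ to exactly the numerical inequality in $(L_{3})$(b)'' hides precisely this construction; without it (or an equivalent choice of norm on $\R^{n+1}$ that makes $\|\widehat{DT}\|$ equal $\max\{\|A\|_{D}+\|B\|_{D},\|C\|_{D}+1\}$), the uniform fiber contraction is not established.
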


\subsubsection{Preliminares}
Before proving Proposition \ref{l3}, we state without proof two theorem which will be useful in the sequel.
 \begin{teorema}[Fiber Contraction Theorem \cite{HP}]\label{FCT} Let $(X,d_{X})$ and $(Y,d_{Y})$ be two complete metric spaces, and let $\Upsilon:X\times Y \to X\times Y$ be a map of the form
$$\Upsilon(x,y)=(\Gamma(x),\Psi(x,y)).$$
Assume that
  \begin{itemize}
  \item[$(a)$]$\Gamma$ has an attracting fixed point $x_{\infty}$, that is,
  \begin{center}
  $\Gamma(x_{\infty})=x_{\infty},$ $\displaystyle\lim_{n \to \infty}\Gamma^{n}(x)=x_{\infty}, \quad \mbox{for all} \quad x\in X;$
  \end{center}
 \item[$(b)$] the family of functions $\Psi^{y}:X \to Y$ given by $\Psi^{y}(x)=\Psi(x,y)$ depends on $y$ continuously; that is, if $x_{n}\to x$ as $n \to \infty,$ then $\Psi^{y}(x_{n})\to \Psi^{y}(x)$ as $n \to \infty.$
  \item[$(c)$]for every $x\in X$ the map $\Psi_{x}:=\Psi(x,.):Y \to Y$ defined by $\Psi_{x}(y):=\Psi(x,y)$ is a $\lambda$-contraction, with $\lambda<1.$
      This mean that
      $$d_{Y}(\Psi_{x}(y_{1}),\Psi_{x}(y_{2}))\leq \lambda d_{Y}(y_{1},y_{2}),$$
      for all $x\in X$ and $y_{1}, y_{2} \in Y.$

  \end{itemize}
  Then, if $y_{\infty}$ denotes the unique fixed point of $\Psi_{x_{\infty}},$ the point $(x_{\infty}, y_{\infty}) \in X\times Y$ is an attracting fixed point of $\Upsilon$, that is,
    \begin{equation*}
      \displaystyle \lim_{n\to \infty}\Upsilon^{n}(x,y)=(x_{\infty}, y_{\infty}).
    \end{equation*}
  \end{teorema}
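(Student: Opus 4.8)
\emph{Proof proposal.} The plan is to deduce the statement from the Banach fixed point theorem applied fibrewise, combined with a Gronwall-type estimate along $\Upsilon$-orbits. First I would record that, since $Y$ is complete and $\Psi_{x_\infty}$ is a $\lambda$-contraction by hypothesis $(c)$, the map $\Psi_{x_\infty}$ has a unique fixed point, which is exactly the $y_\infty$ in the statement; then
$\Upsilon(x_\infty,y_\infty)=(\Gamma(x_\infty),\Psi_{x_\infty}(y_\infty))=(x_\infty,y_\infty)$, so $(x_\infty,y_\infty)$ is a fixed point of $\Upsilon$. It then remains to prove the attraction property.

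Next I would fix an arbitrary $(x,y)\in X\times Y$ and track its orbit: writing $\Upsilon^{n}(x,y)=(x_n,y_n)$ one has $x_n=\Gamma^{n}(x)$ and $y_n=\Psi_{x_{n-1}}(y_{n-1})$ for $n\geq 1$, with $(x_0,y_0)=(x,y)$. By hypothesis $(a)$ we already know $x_n\to x_\infty$, so the whole problem reduces to showing $y_n\to y_\infty$. The estimate I would use is, via $y_\infty=\Psi_{x_\infty}(y_\infty)$ and the triangle inequality,
\begin{equation*}
d_Y(y_n,y_\infty)\leq d_Y\big(\Psi_{x_{n-1}}(y_{n-1}),\Psi_{x_{n-1}}(y_\infty)\big)+d_Y\big(\Psi_{x_{n-1}}(y_\infty),\Psi_{x_\infty}(y_\infty)\big)\leq \lambda\, d_Y(y_{n-1},y_\infty)+\varepsilon_n,
\end{equation*}
where $\varepsilon_n:=d_Y\big(\Psi^{y_\infty}(x_{n-1}),\Psi^{y_\infty}(x_\infty)\big)$, the first inequality using the uniform contraction $(c)$ and the second being the definition $\Psi^{y_\infty}(x)=\Psi(x,y_\infty)$ together with $\Psi^{y_\infty}(x_\infty)=y_\infty$. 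By continuity of $\Psi^{y_\infty}$ (hypothesis $(b)$) and $x_{n-1}\to x_\infty$, this gives $\varepsilon_n\to 0$, and the recursion reads $a_n\leq \lambda a_{n-1}+\varepsilon_n$ with $a_n:=d_Y(y_n,y_\infty)$.

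Finally I would invoke (or quickly prove) the elementary lemma that a non-negative sequence with $a_n\leq \lambda a_{n-1}+\varepsilon_n$, $0\leq\lambda<1$, $\varepsilon_n\to0$, must satisfy $a_n\to0$: iterate to $a_n\leq \lambda^{n}a_0+\sum_{j=1}^{n}\lambda^{n-j}\varepsilon_j$, split the sum at an index $N$ chosen so that $\varepsilon_j<\delta$ for $j>N$, bound it by $\lambda^{n-N}\sum_{j\leq N}\lambda^{N-j}\varepsilon_j+\delta/(1-\lambda)$, let $n\to\infty$ and then $\delta\to0$. Applying this yields $y_n\to y_\infty$, hence $\Upsilon^{n}(x,y)=(x_n,y_n)\to(x_\infty,y_\infty)$ for every starting point, which is the assertion. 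I do not expect a genuine obstacle: the argument is a routine combination of the triangle inequality, the uniform contraction constant, and the convergence $x_n\to x_\infty$; the only point requiring a line of care is the last summation lemma on the recursion, and the only hypothesis that must be used in full strength is the uniformity of $\lambda$ in $(c)$.
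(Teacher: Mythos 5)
Your proof is correct. Note that the paper does not prove this theorem at all: it is stated without proof and attributed to Hirsch--Pugh \cite{HP}, so there is nothing to compare against except the classical argument, and yours is exactly that classical argument — fixing the fiber point $y_\infty$ of $\Psi_{x_\infty}$, deriving the recursion $d_Y(y_n,y_\infty)\leq \lambda\, d_Y(y_{n-1},y_\infty)+\varepsilon_n$ with $\varepsilon_n\to 0$ from the uniform contraction in hypothesis $(c)$ and the continuity in hypothesis $(b)$, and closing with the standard summation lemma for $a_n\leq\lambda a_{n-1}+\varepsilon_n$. All steps, including the identification $\Psi^{y_\infty}(x_\infty)=y_\infty$ and the tail-splitting estimate, are sound, so your write-up would serve as a complete proof of the cited theorem.
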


\begin{teorema}[Perron-Frobenius Theorem for positive matrices \cite{Meyer}]\label{PFT}
Let $A=[a_{i,j}]_{n \times n}$ be a real $n \times n$ positive matrix: $a_{i,j}>0,$ for $1\leq i,j\leq n.$ Then:
\begin{itemize}
\item[$(a)$] $A$ has a positive simple eigenvalue $r$ which is equal to the spectral radius of $A.$
\item[$(b)$] There exists an eigenvector $x$ with all the coordinates positives such that $Ax=rx.$
\item[$(c)$] The eigenvector is the unique vector defined by
\begin{center}
$Ap=rp,$ $p>0,$ and $\parallel p \parallel_{1}=1$, where $\parallel p \parallel_{1}=\sum_{i=1}^{n}|p_{i}|,$
\end{center}
and, except for positive multiples of $p,$ there are no other nonnegative eigenvector for $A,$ regardless of the eigenvalue.
   \item[$(d)$] An estimate of $r$ is given by inequalities:
   $$ \min_{i}\sum_{j}a_{ij} \leq r \leq \max_{i}\sum_{j}a_{ij}.$$

\end{itemize}
\end{teorema}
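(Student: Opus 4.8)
The plan is to prove the Perron--Frobenius theorem along classical lines: produce the Perron root $r$ and a positive \emph{right} eigenvector by a Collatz--Wielandt maximization, produce a positive \emph{left} eigenvector by the same construction applied to $A^{T}$, and then use strict positivity (which upgrades ``$x\geq 0$, $x\neq 0$'' to ``$Ax>0$'') together with the left eigenvector to control the entire spectrum. Throughout, inequalities between vectors are meant componentwise, and for $v\in\C^{n}$ we write $|v|=(|v_{1}|,\dots,|v_{n}|)$.

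\emph{Step 1: the Perron root and the two positive eigenvectors.} Let $\Delta=\{x\in\R^{n}:x\geq 0,\ \sum_{i}x_{i}=1\}$ and $S=A(\Delta)$; since each $x\in\Delta$ is nonnegative and nonzero, $Ax>0$, so $S$ is a compact subset of the open positive orthant. On $S$ define $h(y)=\min_{1\leq i\leq n}(Ay)_{i}/y_{i}$, which is continuous (the denominators are bounded below on the compact set $S$) and invariant under positive scaling. Let $r=\max_{y\in S}h(y)$, attained at $y^{*}\in S$; then $y^{*}>0$ and $r>0$. I claim $Ay^{*}=ry^{*}$. Indeed $z:=Ay^{*}-ry^{*}\geq 0$ by definition of $h$; if $z\neq 0$ then $Az>0$, i.e.\ $A(Ay^{*})>r\,Ay^{*}$; rescaling $y^{*}$ to $u^{*}=y^{*}/\sum_{i}y^{*}_{i}\in\Delta$ and using homogeneity of $h$ gives $Au^{*}\in S$ with $h(Au^{*})>r$, contradicting maximality. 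Setting $p=u^{*}$ we have $Ap=rp$, $p>0$, $\norm{p}_{1}=1$, which proves (b). The same construction applied to $A^{T}$ yields $q>0$ and $r'>0$ with $q^{T}A=r'q^{T}$; from $q^{T}(Ap)=r\,(q^{T}p)$ and $q^{T}(Ap)=(q^{T}A)p=r'(q^{T}p)$, together with $q^{T}p>0$, we conclude $r'=r$.

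\emph{Step 2: $r$ is the spectral radius, and part (c).} For any eigenvalue $\lambda\in\C$ with eigenvector $v\neq 0$, the triangle inequality gives $|\lambda|\,|v|\leq A|v|$; left-multiplying by $q^{T}>0$ yields $|\lambda|\,(q^{T}|v|)\leq r\,(q^{T}|v|)$ with $q^{T}|v|>0$, hence $|\lambda|\leq r$. So $r$ is the spectral radius and an eigenvalue. Next, if $Ax=\mu x$ with $x\geq 0$, $x\neq 0$, then $\mu x=Ax>0$ forces $\mu>0$ and $x>0$, and $\mu\,(q^{T}x)=r\,(q^{T}x)$ forces $\mu=r$; comparing with $p$, put $t=\min_{i}x_{i}/p_{i}>0$, so $w=x-tp\geq 0$ has a zero coordinate while $Aw=rw$; if $w\neq 0$ then $Aw>0$, a contradiction, so $x=tp$. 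Hence the only nonnegative eigenvectors of $A$ are the positive multiples of $p$ (all with eigenvalue $r$), and $p$ is the unique one with $\norm{p}_{1}=1$; this is (c).

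\emph{Step 3: simplicity of $r$, and part (d).} If $v\in\C^{n}$ is any eigenvector for $r$, then $r|v|\leq A|v|$ and left-multiplication by $q^{T}$ forces equality $A|v|=r|v|$; by Step 2, $|v|$ is a positive multiple of $p$, so $|v|>0$, and then equality in each $|\sum_{j}a_{ij}v_{j}|=\sum_{j}a_{ij}|v_{j}|$ (all $a_{ij}>0$, all $v_{j}\neq 0$) forces the $v_{j}$ to share a common unimodular factor $\zeta$, whence $v=\zeta|v|$ is a scalar multiple of $p$: the $r$-eigenspace is one-dimensional. If, moreover, $r$ had algebraic multiplicity $\geq 2$, its single Jordan block would have size $\geq 2$, giving a vector $u$ with $Au=ru+p$; left-multiplying by $q^{T}$ gives $q^{T}p=0$, contradicting $q,p>0$. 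Thus $r$ is a simple eigenvalue, which finishes (a). Finally, picking an index $i_{+}$ with $p_{i_{+}}=\max_{j}p_{j}$, from $rp_{i_{+}}=\sum_{j}a_{i_{+}j}p_{j}\leq p_{i_{+}}\sum_{j}a_{i_{+}j}$ we get $r\leq\max_{i}\sum_{j}a_{ij}$; symmetrically, an index where $p_{j}$ is smallest gives $r\geq\min_{i}\sum_{j}a_{ij}$, which is (d).

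\emph{Main obstacle.} The only genuinely delicate step is the improvement argument in Step 1: it works precisely because $h$ is scale-invariant (so the improved vector $Au^{*}$ can be pulled back into the compact set $S$) and because $a_{ij}>0$ turns $z\geq 0$, $z\neq 0$ into $Az>0$. Everything afterwards is essentially bookkeeping once both positive eigenvectors are available; the only other place needing a structural input is the reduction of algebraic simplicity to the single relation $Au=ru+p$ via the Jordan form, using the one-dimensionality of the eigenspace established just before.
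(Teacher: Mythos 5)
Your proof is correct. Note, however, that the paper offers no proof of Theorem \ref{PFT} to compare against: it is stated as a quoted classical result with a citation to Meyer \cite{Meyer}, and is used purely as a black box (in the proof of Lemma \ref{|Mib|}, to extract the positive eigenvalue and positive eigenvector of the matrix $\Delta$). Your argument is essentially the standard Collatz--Wielandt/Wielandt proof one finds in the cited reference: the scale-invariant functional $h$ maximized over the compact set $A(\Delta)$ sitting inside the open orthant, the positivity upgrade ``$z\geq 0$, $z\neq 0$ implies $Az>0$'' to force $Ay^{*}=ry^{*}$, the left eigenvector $q$ of $A^{T}$ used both to dominate the whole spectrum and to exclude a Jordan block of size $\geq 2$, and the phase-alignment (equality in the triangle inequality with strictly positive weights) to get one-dimensionality of the $r$-eigenspace; parts (c) and (d) then follow by the comparison arguments you give. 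All steps check out. The only point left implicit is in the Jordan-chain step: the chain relation reads $Au=ru+v$ with $v$ an $r$-eigenvector, and since the eigenspace is spanned by $p$ you must rescale $u$ by the nonzero factor relating $v$ to $p$ before writing $Au=ru+p$ --- harmless, as the resulting contradiction $q^{T}p=0$ is unaffected by a nonzero scalar.
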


 We will now given some elementary properties of multilinear maps. Let us start by fixing the notion. The set $\{1, ..., n\}$ will be denoted by $[n].$ If  $E:=\R^{n}$ and $F:=\R,$  then $\mathcal{F}([k],\{E,F\})$  denoted the set  of all the functions $f:[k] \to \{E,F\}$. Notice that the cardinality of $\mathcal{F}([k],\{E,F\})$ is $2^{k}.$  Finally  $\pi_{_{E}}:\R^{n+1}\to \R^{n+1}$ and $\pi_{_{F}}:\R^{n+1}\to \R^{n+1}$  denoted the  projections of $\R^{n+1}$ on $E$ along $F$ and of $\R^{n+1}$ on $F$ along $E$ respectively.
\begin{definicao}\label{gfE}
Assume that $f \in \mathcal{F}([k],\{E,F\})$ and that  $\heartsuit=E$  or $\heartsuit=F.$ Then, define  \\ $\displaystyle g_{_{f,\heartsuit}}:[n+1]\to \{E,F\}$  by
\begin{equation*}
 g_{_{f,\heartsuit}}(i)=\begin{cases}
\displaystyle f(i),&\mbox{if}  \quad i \in [n],\\
\heartsuit, &\mbox{if}\quad i=n+1.
\end{cases}
\end{equation*}
By $\Omega([n+1],\{\heartsuit\})$ we denote the set of all functions $g_{_{f,\heartsuit}}.$
\end{definicao}
By $A \uplus B$ we denote the usual disjoint intersection between sets.
 \begin{lema}\label{F(n+1)}The following statement holds:
 \begin{itemize}
 \item[(a)]$\Omega([n+1],\{E\}) \uplus \Omega([n+1],\{F\})=\mathcal{F}([n+1],\{E,F\}).$

 \end{itemize}
 \end{lema}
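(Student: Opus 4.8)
The statement to prove is Lemma~\ref{F(n+1)}(a): that
$\Omega([n+1],\{E\}) \uplus \Omega([n+1],\{F\})=\mathcal{F}([n+1],\{E,F\})$,
and the plan is simply to exhibit this as a decomposition of the domain of the functions in $\mathcal{F}([n+1],\{E,F\})$ according to the value taken at the last coordinate $n+1$. First I would observe that any $h\in\mathcal{F}([n+1],\{E,F\})$ restricts to a function $f:=h|_{[n]}\in\mathcal{F}([n],\{E,F\})$, and that $h$ is completely determined by the pair $(f,h(n+1))$ with $h(n+1)\in\{E,F\}$; conversely every such pair arises from a unique $h$. This is the elementary ``a function on $[n+1]$ is a function on $[n]$ together with a value at $n+1$'' bijection.

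Next I would use Definition~\ref{gfE}: for $\heartsuit\in\{E,F\}$ the map $g_{f,\heartsuit}$ is precisely the unique extension of $f$ to $[n+1]$ taking the value $\heartsuit$ at $n+1$. Hence $\Omega([n+1],\{E\})$ is exactly the set of those $h\in\mathcal{F}([n+1],\{E,F\})$ with $h(n+1)=E$, and $\Omega([n+1],\{F\})$ is exactly the set of those with $h(n+1)=F$. Since $h(n+1)$ takes one and only one of the two values $E$ or $F$, these two subsets are disjoint and their union is all of $\mathcal{F}([n+1],\{E,F\})$; this gives the claimed disjoint union. To round things off I would also note that the cardinality count is consistent: $|\Omega([n+1],\{E\})|=|\Omega([n+1],\{F\})|=|\mathcal{F}([n],\{E,F\})|=2^{n}$, so the union has $2^{n+1}$ elements, matching $|\mathcal{F}([n+1],\{E,F\})|$.

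There is essentially no obstacle here: the only points requiring care are (i) checking that the two sets $\Omega([n+1],\{E\})$ and $\Omega([n+1],\{F\})$ really are disjoint, which follows immediately because a function cannot send $n+1$ to both $E$ and $F$ (these being distinct objects), and (ii) checking surjectivity, i.e.\ that every $h$ on $[n+1]$ is of the form $g_{f,\heartsuit}$ for a suitable $f$ and $\heartsuit$, which is just the restriction-and-extension observation above. So the proof is a short verification rather than anything substantive; the lemma is a bookkeeping device that will presumably be used to organize an inductive estimate on multilinear maps in the steps that follow.
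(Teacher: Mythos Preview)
Your proposal is correct and follows exactly the approach the paper indicates: the paper simply states that the result follows immediately from Definition~\ref{gfE}, and what you have written is a careful unpacking of that immediate verification. There is no substantive difference in method, only in level of detail.
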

\begin{proof}
The proof follows immediately from Definition  \ref{gfE}.
\end{proof}
Recall that $L^{k}(\R^{n+1},\R^{n})$ denoted the space of all the $k$-linear maps from $\R^{n+1}$ to $\R^{n}.$
\begin{definicao}\label{Lf}
 Assume that $f \in \mathcal{F}([k],\{E,F\}).$ Then, the set of all $k$-linear maps $b$ such that
\begin{itemize}
\item [(a)]$b(\pi_{g(1)}(x_{1}),\pi_{g(2)}(x_{2}),\ldots \pi_{g(k)}(x_{k}))=0,$ for every $g \in  \mathcal{F}([k],\{E,F\}),$ $g\neq f$ and for each $k$-tuple $(x_{1}\ldots,x_{k})\in \underbrace{\R^{n+1}\times \ldots \times \R^{n+1}}_{k-times}.$
\end{itemize}
 will be denoted by
$$L^{k}_{f}(\R^{n+1},\R^{n})$$
\end{definicao}
\begin{lema}\label{LD0}
We have the following properties:
\begin{itemize}
\item[(a)] If $b \in L^{k}(\R^{n+1},\R^{n}),$ then
\begin{equation*}\label{LD}
b(x_{1},x_{2},\ldots,x_{k})=\sum_{f \in \mathcal{F}([k],\{E,F\})}b(\pi_{f(1)}(x_{1}),\ldots,\pi_{f(k)}(x_{k})),
\end{equation*}
for every $(x_{1},\ldots,x_{k})\in \underbrace{\R^{n+1}\times \ldots \times \R^{n+1}}_{k-times}.$ \\ The function $(x_{1},x_{2},\ldots,x_{k}) \to b(\pi_{f(1)}(x_{1}),\ldots,\pi_{f(k)}(x_{k}))$ will be denoted by $b_{f}.$
  \item[(b)] If $f \in \mathcal{F}([k],\{E,F\})$ and $b \in \displaystyle L^{k}_{f}(\R^{n+1},\R^{n}),$ then
 \begin{equation*}\label{LD.1}
b(x_{1},x_{2},\ldots,x_{k})=b(\pi_{f(1)}(x_{1}),\pi_{f(2)}(x_{2}),\ldots,\pi_{f(k)}(x_{k})),
\end{equation*}
 for every $(x_{1},\ldots,x_{k})\in \underbrace{\R^{n+1}\times \ldots \times \R^{n+1}}_{k-times}.$
  \item[(c)]$L^{k}(\R^{n+1},\R^{n})$ can be decomposed into a direct sum of $2^{k}$ $k$-linear maps that is,
 \begin{equation*}\label{LD.2}
 L^{k}(\R^{n+1},\R^{n}) = \bigoplus_{f\in \mathcal{F}([k],\{E,F\})}\displaystyle L^{k}_{f}(\R^{n+1},\R^{n}),
\end{equation*}
where $L^{k}_{f}(\R^{n+1},\R^{n})$ as in Definition \ref{Lf}.
\end{itemize}
\end{lema}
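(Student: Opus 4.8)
The plan is to prove parts (a), (b), (c) in that order, since (a) carries the content and (b), (c) are bookkeeping with the two projections. Throughout I will only use that $\R^{n+1}=E\oplus F$, so that $x=\pi_{_{E}}(x)+\pi_{_{F}}(x)$ for every $x\in\R^{n+1}$, together with the projection identities $\pi_{_{E}}\circ\pi_{_{E}}=\pi_{_{E}}$, $\pi_{_{F}}\circ\pi_{_{F}}=\pi_{_{F}}$ and $\pi_{_{E}}\circ\pi_{_{F}}=\pi_{_{F}}\circ\pi_{_{E}}=0$; in particular $\pi_{f(i)}\circ\pi_{g(i)}=0$ whenever $f(i)\neq g(i)$.

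For part (a), fix $b\in L^{k}(\R^{n+1},\R^{n})$ and a $k$-tuple $(x_{1},\ldots,x_{k})$. Substituting $x_{i}=\pi_{_{E}}(x_{i})+\pi_{_{F}}(x_{i})$ in each of the $k$ slots and expanding $b(x_{1},\ldots,x_{k})$ by $k$-linearity yields $2^{k}$ terms, one for each way of choosing $E$ or $F$ in each argument, i.e.\ one for each $f\in\mathcal{F}([k],\{E,F\})$; the term indexed by $f$ is exactly $b_{f}(x_{1},\ldots,x_{k})=b(\pi_{f(1)}(x_{1}),\ldots,\pi_{f(k)}(x_{k}))$, which is the asserted identity. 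Part (b) follows at once: if $b\in L^{k}_{f}(\R^{n+1},\R^{n})$, Definition \ref{Lf}(a) annihilates every summand $b_{g}$ with $g\neq f$ in the expansion of part (a), leaving $b(x_{1},\ldots,x_{k})=b(\pi_{f(1)}(x_{1}),\ldots,\pi_{f(k)}(x_{k}))$.

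For part (c) I must check that the subspaces $L^{k}_{f}(\R^{n+1},\R^{n})$ span $L^{k}(\R^{n+1},\R^{n})$ and that their sum is direct. For spanning, given $b$ I use the decomposition $b=\sum_{f}b_{f}$ from part (a) and verify that each $b_{f}$ indeed lies in $L^{k}_{f}$: if $g\neq f$ there is an index $i$ with $g(i)\neq f(i)$, and in $b_{f}(\pi_{g(1)}(x_{1}),\ldots,\pi_{g(k)}(x_{k}))=b\bigl(\pi_{f(1)}(\pi_{g(1)}(x_{1})),\ldots,\pi_{f(k)}(\pi_{g(k)}(x_{k}))\bigr)$ the $i$-th argument equals $\pi_{f(i)}(\pi_{g(i)}(x_{i}))=0$, so the whole expression vanishes and the condition of Definition \ref{Lf}(a) is met. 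For directness, suppose $\sum_{f}b_{f}=0$ with each $b_{f}\in L^{k}_{f}$; fixing $f_{0}$ and evaluating this identity at $(\pi_{f_{0}(1)}(x_{1}),\ldots,\pi_{f_{0}(k)}(x_{k}))$ kills every term with $f\neq f_{0}$ by Definition \ref{Lf}(a), whence $b_{f_{0}}(\pi_{f_{0}(1)}(x_{1}),\ldots,\pi_{f_{0}(k)}(x_{k}))=0$ for all $x_{i}$, and since part (b) rewrites $b_{f_{0}}$ at an arbitrary tuple in exactly this projected form, $b_{f_{0}}\equiv0$. As an independent check one may compare dimensions: $\dim L^{k}_{f}(\R^{n+1},\R^{n})=n\,n^{|f^{-1}(E)|}$ and $\sum_{f}n\,n^{|f^{-1}(E)|}=n\sum_{j=0}^{k}\binom{k}{j}n^{j}=n(n+1)^{k}=\dim L^{k}(\R^{n+1},\R^{n})$.

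I do not anticipate a genuine difficulty: the statement is the routine fact that a $k$-linear map on a direct sum splits according to which summand is inserted in each slot. The one point deserving care is the verification $b_{f}\in L^{k}_{f}$ inside part (c) — it is not enough to exhibit the formal decomposition $b=\sum_{f}b_{f}$; one must confirm the vanishing condition of Definition \ref{Lf}(a) for each piece, which is precisely where the orthogonality $\pi_{_{E}}\circ\pi_{_{F}}=0$ enters. Everything else is formal manipulation.
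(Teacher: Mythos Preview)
Your proof is correct and follows essentially the same approach as the paper, which dispatches the lemma in a single line (``The proof follows from Lemma~\ref{F(n+1)} and Eq.~\eqref{Lf}''); you have simply unpacked in full the multilinear expansion over the direct sum $\R^{n+1}=E\oplus F$ that the authors leave implicit. Your explicit verification that each piece $b_{f}$ actually lands in $L^{k}_{f}$ via the projection orthogonality $\pi_{f(i)}\circ\pi_{g(i)}=0$, and your separate directness argument, are exactly the content hidden behind the paper's reference to the definitions.
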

\begin{proof}
The proof follows from Lemma \ref{F(n+1)} and Eq. \eqref{Lf}.
\end{proof}
 \subsubsection{Proof of Proposition \ref{l3}}

In order to prove Proposition \ref{l3} we state and prove the following proposition.
\begin{prop}\label{pfpsi0}
 Under the notation of Definitions \ref{Dis2} and $\ref{psi}.$ Let  $1\leq i \leq k$ be a integer and fix a point   $(\overline{\nu}_{0},\ldots,\overline{\nu}_{i-1}) \in \mathcal{D}_{0}\times \mathcal{D}_{1}\times \ldots \times \mathcal{D}_{i-1}.$ Then, the space $D_{i}$ can be endowed with a norm $|.|_{i,D}$ equivalent to the original norm $||.||_{D}$  such that the function $$\Psi^{i}(\overline{\nu}_{0},\ldots,\overline{\nu}_{i-1},\bullet):\mathcal{D}_{i} \to \mathcal{D}_{i}$$  is a contraction with  constant of contraction independent of the point $(\overline{\nu}_{0},\overline{\nu}_{1},\ldots,\overline{\nu}_{i-1}).$
\end{prop}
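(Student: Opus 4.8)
The plan is to verify that $\Psi^{i}(\overline{\nu}_{0},\ldots,\overline{\nu}_{i-1},\bullet)$ is affine in its last argument $\overline{\nu}_{i}$, isolate its linear part, and exhibit a rescaled norm on $\mathcal{D}_{i}$ that makes this linear part a contraction with a constant depending only on the structural constants in Assumption~\ref{Assumptions}, not on $(\overline{\nu}_{0},\ldots,\overline{\nu}_{i-1})$. First I would inspect Definitions~\ref{U1.N}, \ref{U2.N}, \ref{U3.N} and~\ref{psi}: the variable $\overline{\nu}_{i}$ enters $U_{1}^{i}$ only through the term $\frac{(\overline{\nu}_{0}\circ TA-C)\,i!}{(1-\overline{\nu}_{0}\circ TB)^{2}}\,Sym^{i}\circ\mathcal{DC}^{(i,i,i)}(\overline{\nu}_{i},T)B$, enters $U_{2}^{i}$ only through $(1-\overline{\nu}_{0}\circ TB)^{-1}Sym^{i}\circ\mathcal{DC}^{(i,i,i)}(\overline{\nu}_{i},T)A$, and does not appear at all in $U_{3}^{i}$. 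By Remark~\ref{DCPE} (Eq.~\eqref{Dkkk}), $\mathcal{DC}^{(i,i,i)}(\overline{\nu}_{i},T)=i!\,(\overline{\nu}_{i}\circ T)\,\underbrace{DT\cdots DT}_{i\text{-times}}$, so $\Psi^{i}(\overline{\nu}_{0},\ldots,\overline{\nu}_{i-1},\bullet)$ has the form $\overline{\nu}_{i}\mapsto \mathcal{L}(\overline{\nu}_{i})+\mathcal{C}$, where $\mathcal{C}$ collects all $\overline{\nu}_{i}$-independent terms and
\begin{equation*}
\mathcal{L}(\overline{\nu}_{i})(x,y)=\Big(\tfrac{(\overline{\nu}_{0}\circ TA-C)}{(1-\overline{\nu}_{0}\circ TB)^{2}}\,i!\,Sym^{i}\!\big((\overline{\nu}_{i}\circ T)DT^{\otimes i}\big)B+\tfrac{1}{1-\overline{\nu}_{0}\circ TB}\,i!\,Sym^{i}\!\big((\overline{\nu}_{i}\circ T)DT^{\otimes i}\big)A\Big)(x,y),
\end{equation*}
for $y\neq 0$, extended by $0$ on $D_{0}$; since $\Psi^{i}-\mathcal{L}$ is constant in $\overline{\nu}_{i}$, it suffices to show $\mathcal{L}$ is a contraction in a suitable norm.

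Next I would estimate $\|\mathcal{L}(\overline{\nu}_{i})\|$ pointwise. Using $\|Sym^{i}\|\leq 1$, $\|\overline{\nu}_{0}\|\leq L$, and the defining bounds $\|A\|\leq\|A\|_{D}$, etc., one gets $\|\mathcal{L}(\overline{\nu}_{i})(x,y)\|\leq \|\overline{\nu}_{i}\circ T\|\cdot\|DT(x,y)\|^{i}\cdot M_{i}$, where $M_{i}$ is built from $\|A\|_{D}$, $\|B\|_{D}$, $\|C\|_{D}$, $L$ and $i!$; the key point is that $\|DT(x,y)\|^{i}$ is controlled in terms of $\|\partial_{y}G\|_{D}$ (via Definition~\ref{A,B,C}, $DT$ has rows governed by $\partial_{y}G$ times the matrices $A,B,C$ and $1$), so after taking suprema $\|\mathcal{L}(\overline{\nu}_{i})\|_{D}\leq \lambda_{0}\,\|\overline{\nu}_{i}\|_{D}$ with
\begin{equation*}
\lambda_{0}=\frac{(i!)^{2}\,(\|A\|_{D}+\|C\|_{D}\|B\|_{D})\,\max_{m+n=i}\{(\|A\|_{D}+\|B\|_{D})^{m}(\|C\|_{D}+1)^{n}\}}{(\|\partial_{y}G\|_{D})^{-i}\,\big(1+\|A\|_{D}+\sqrt{(1-\|A\|_{D})^{2}-4\|B\|_{D}\|C\|_{D}}\big)^{2}},
\end{equation*}
using that $L$ from Eq.~\eqref{L} satisfies $L\|B\|_{D}\leq\frac{-(1-\|A\|_{D})+\sqrt{\cdots}}{2}$, so that $1-L\|B\|_{D}\geq\frac{1}{2}(1+\|A\|_{D}+\sqrt{\cdots})$ and $L\|A\|_{D}+\|C\|_{D}$ is bounded by the numerator. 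This $\lambda_{0}$ is exactly (a constant multiple of) the left side of the hypothesis $(L_{3})$(a) when $i=1$ and of $(L_{3})$(b) when $i=k\geq 2$, so $(L_{3})$ gives $\lambda_{0}<1$ — but only for $i=1$ and $i=k$. To cover the intermediate $2\leq i\leq k-1$ I would interpolate: the $\max_{m+n=i}$ term is log-convex in $i$, and the extra conditions $\|\partial_{y}G\|_{D}\geq\frac14$ or $\|\partial_{x}F\|_{D}\geq\frac14$ in $(L_{3})$(b) are precisely what is needed to bound the intermediate factors; thus $\lambda_{0}=\lambda_{0}(i)<1$ for every $1\leq i\leq k$, with $\lambda_{0}$ independent of $(\overline{\nu}_{0},\ldots,\overline{\nu}_{i-1})$.

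Then I would address the decomposition into the direct sum $L^{i}(\R^{n+1},\R^{n})=\bigoplus_{f\in\mathcal{F}([i],\{E,F\})}L^{i}_{f}(\R^{n+1},\R^{n})$ from Lemma~\ref{LD0}, because a naive supremum norm may give only $\lambda_{0}\leq$ something that is not $<1$ on the nose in the presence of the $(\|C\|_{D}+1)$ factors; the directions corresponding to differentiating in the $F$-variable versus the $E$-variable contract at genuinely different rates. The remedy is to put on $\mathcal{D}_{i}$ the weighted norm $|\nu_{i}|_{i,D}=\sum_{f}c_{f}\,\|(\nu_{i})_{f}\|_{D}$, where the weights $c_{f}>0$ are the entries of the Perron eigenvector of the nonnegative $2^{i}\times 2^{i}$ matrix encoding how $\mathcal{L}$ maps the $f$-component to the $g$-components (this is where Theorem~\ref{PFT} enters): by Perron--Frobenius the spectral radius $r$ of this matrix equals the best contraction constant, and $r\leq\max_{i}\sum_{j}a_{ij}\leq\lambda_{0}<1$ by part~(d), so in the norm $|.|_{i,D}$ the map $\mathcal{L}$ — hence $\Psi^{i}(\overline{\nu}_{0},\ldots,\overline{\nu}_{i-1},\bullet)$ — is an $r$-contraction with $r<1$ independent of the fixed arguments. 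Equivalence of $|.|_{i,D}$ with $\|.\|_{D}$ is immediate since it is a finite positive combination of the component sup-norms and the decomposition is a direct sum.

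The main obstacle I anticipate is not the Perron--Frobenius bookkeeping but the bound on $\|DT(x,y)\|^{i}$ and, through it, the verification that the constant $\lambda_{0}(i)$ really is dominated by the quantities appearing in $(L_{3})$ for \emph{all} intermediate $i$: one has to be careful that $DT$ in the coordinates of Definition~\ref{A,B,C} contributes a factor $\|\partial_{y}G\|_{D}$ per derivative together with the block structure $\begin{pmatrix}A\\ C\end{pmatrix}$ and $\begin{pmatrix}B\\ 1\end{pmatrix}$, and that raising this to the $i$-th tensor power and contracting against $A$ (an $n\times n$ block) or $B$ (a column) produces exactly the $\max_{m+n=i}\{(\|A\|_{D}+\|B\|_{D})^{m}(\|C\|_{D}+1)^{n}\}$ weighting — matching the hypotheses. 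Once that combinatorial matching is pinned down, the contraction constant is explicit and the proposition follows from Lemma~\ref{LD0} and Theorem~\ref{PFT}.
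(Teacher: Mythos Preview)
Your proposal is correct and follows essentially the paper's approach: isolate the affine dependence of $\Psi^{i}$ on $\overline{\nu}_{i}$ through the two $\mathcal{DC}^{(i,i,i)}$ terms, introduce the Perron--Frobenius eigenvector of the $2^{i}\times 2^{i}$ matrix arising from the decomposition of Lemma~\ref{LD0} to define $|\cdot|_{i,D}$, and match the resulting contraction constant against $(L_{3})$. The paper carries this out via Lemma~\ref{|Mib|} and the computation of $\nu_{i}^{1}-\nu_{i}^{2}$ in Eq.~\eqref{psi0.2}--\eqref{DT2.0}; the only cosmetic difference is that the paper defines $|\nu_{i}|_{i,D}=\sup_{(x,y)}|\nu_{i}(x,y)|_{i}$ (sup of the weighted sum) rather than your weighted sum of sups, and it simply asserts $\Theta(i)<1$ for all intermediate $i$ without the log-convexity interpolation you (correctly) flag.
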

The proof of Proposition \ref{pfpsi0} will be given after some lemmas. We set,
\begin{equation}\label{DT:1}
\displaystyle \widehat{DT}(x,y):=\left[
                                                  \begin{array}{cc}
                                                    A(x,y) & B(x,y) \\
                                                    C(x,y) & 1\\
                                                  \end{array}
                                                \right]_{(n+1)\times(n+1)},
\end{equation}
where the functions $A(x,y), B(x,y)$ and $C(x,y)$ are as in Definition $\ref{A,B,C}.$
\begin{lema}\label{|Mib|}
Let $M^{i}:L^{i}(\R^{n+1},\R^{n}) \to L^{i}(\R^{n+1},\R^{n})$ be a map defined by
\begin{equation*}\label{|Mib.1|}
 M^{i}(b)(x_{1},\ldots,x_{i})=b(\widehat{DT}x_{1},\ldots, \widehat{DT}x_{i}).
 \end{equation*}
 Then, the space $L^{i}(\R^{n+1},\R^{n})$ can be endowed with a norm $|.|_{i}$ equivalent to $\norm{.}$ such that
\begin{equation}\label{Claim1}
\displaystyle \frac{|M^{i}(b)|_{i}}{\displaystyle |b|_{i}}\leq  \max_{ \stackrel{ m,n \in \N}{ m+n=i}}\{(||A||_{D}+||B||_{D})^{m}(||C||_{D}+1)^{n}\}.
\end{equation}
\end{lema}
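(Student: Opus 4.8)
The plan is to construct an explicit norm on $L^{i}(\R^{n+1},\R^{n})$ adapted to the block structure of $\widehat{DT}$, exploiting the direct sum decomposition from Lemma \ref{LD0}(c). First I would recall that every $b\in L^{i}(\R^{n+1},\R^{n})$ decomposes uniquely as $b=\sum_{f\in\mathcal F([i],\{E,F\})}b_{f}$, where $b_{f}\in L^{i}_{f}(\R^{n+1},\R^{n})$ is the component that "sees" only the $E$- or $F$-part of each of its $i$ arguments according to the pattern $f$. For a pattern $f$, let $n(f)$ denote the number of indices sent to $F$ (so $i-n(f)$ indices go to $E$); there are $\binom{i}{m}$ patterns with $n(f)=m$. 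The natural candidate norm is
\begin{equation*}
|b|_{i}:=\sum_{f\in\mathcal F([i],\{E,F\})}\|b_{f}\|,
\end{equation*}
which is clearly equivalent to $\|\cdot\|$ since the index set is finite and the decomposition is a fixed linear isomorphism.

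Next I would analyze how $M^{i}$ acts on a single homogeneous piece $b_f$. Writing $\widehat{DT}x=(\pi_E(\widehat{DT}x),\pi_F(\widehat{DT}x))$ and using
\begin{equation*}
\pi_E(\widehat{DT}x)=A\,\pi_E(x)+B\,\pi_F(x),\qquad \pi_F(\widehat{DT}x)=C\,\pi_E(x)+\pi_F(x),
\end{equation*}
one sees that each argument slot of $b_f$ picks up either a factor controlled by $\|A\|_D+\|B\|_D$ (if that slot is an $E$-slot of $f$) or a factor controlled by $\|C\|_D+1$ (if it is an $F$-slot). Expanding $M^{i}(b_f)$ multilinearly over these two alternatives in each of the $i$ slots produces a sum of terms, each of which lands in some $L^{i}_{g}$, with operator norm bounded by $(\|A\|_D+\|B\|_D)^{i-n(f)}(\|C\|_D+1)^{n(f)}\|b_f\|$ — crucially, the exponents depend only on the source pattern $f$, not on where the pieces get redistributed. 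Summing $\|M^{i}(b)_g\|$ over all $g$ and regrouping by the source pattern $f$, the redistribution is norm-non-increasing (by the triangle inequality applied to $|\cdot|_i$), so
\begin{equation*}
|M^{i}(b)|_{i}\le \sum_{f}(\|A\|_D+\|B\|_D)^{i-n(f)}(\|C\|_D+1)^{n(f)}\|b_f\|\le \Big(\max_{m+n=i}(\|A\|_D+\|B\|_D)^{m}(\|C\|_D+1)^{n}\Big)|b|_{i},
\end{equation*}
which is exactly the claimed inequality \eqref{Claim1}.

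The main obstacle I anticipate is the bookkeeping in the middle step: when one expands $M^{i}(b_f)$ and collects terms by their output pattern $g$, several distinct monomials contribute to the same $L^{i}_{g}$-component, so one must check that summing their norms (rather than taking a single worst term) still only multiplies $\|b_f\|$ by the factor attached to the \emph{source} pattern $f$, and not by something larger. The clean way to see this is to note that for fixed $f$ the total mass $\sum_g \|M^{i}(b_f)_g\|$ telescopes: it equals $\|b_f\|$ times the product over the $i$ slots of (the norm bound coming from the $E$-branch plus that from the $F$-branch) only in a crude estimate, but since in slot $j$ exactly one of the two branches is present when we track a \emph{fixed} $b_f$ — namely the branch dictated by $f(j)$ — no such sum of two terms arises per slot, and the bound is the single product $(\|A\|_D+\|B\|_D)^{i-n(f)}(\|C\|_D+1)^{n(f)}$. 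Once this is pinned down, maximizing over $f$ with $n(f)=m$ ranging over $0,\dots,i$ and invoking equivalence of the norm $|\cdot|_i$ with $\|\cdot\|$ finishes the lemma. I would keep the exposition at the level of the decomposition identity plus the one-slot estimate, relegating the combinatorial expansion to a short displayed computation.
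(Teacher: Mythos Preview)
Your approach is correct and in fact more elementary than the paper's. The paper defines the norm as a \emph{weighted} sum $|b|_{i}=\sum_{f}k_{f}\|b_{f}\|$, where the weights $k_{f}$ are the entries of the Perron eigenvector of the $2^{i}\times 2^{i}$ positive matrix $\Delta=[c_{g,f}]$ with $c_{g,f}=\prod_{j}\|\pi_{g(j)}\widehat{DT}\pi_{f(j)}\|$; this yields $|M^{i}(b)|_{i}\le\lambda\,|b|_{i}$ with $\lambda$ the Perron eigenvalue, and then the paper bounds $\lambda$ by the maximal row sum of $\Delta$, which a separate combinatorial claim identifies with $\max_{m+n=i}(\|A\|_{D}+\|B\|_{D})^{m}(\|C\|_{D}+1)^{n}$. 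Your unweighted choice $|b|_{i}=\sum_{f}\|b_{f}\|$ bypasses Perron--Frobenius entirely: for a single source piece $b_{f}$ one has $\sum_{g}\|M^{i}(b_{f})_{g}\|\le\|b_{f}\|\sum_{g}c_{f,g}$, and the row sum $\sum_{g}c_{f,g}$ factors slotwise as $\prod_{j}(\|\pi_{f(j)}\widehat{DT}\pi_{E}\|+\|\pi_{f(j)}\widehat{DT}\pi_{F}\|)=(\|A\|_{D}+\|B\|_{D})^{\#f^{-1}(E)}(\|C\|_{D}+1)^{\#f^{-1}(F)}$, which is exactly your bound. The paper's route would in principle give the sharper constant $\lambda$, but since it immediately relaxes $\lambda$ to the maximal row sum, nothing is lost by your shortcut. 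One small clean-up: your ``obstacle'' paragraph overcomplicates things --- for a fixed source pattern $f$ there is exactly one term of $M^{i}(b_{f})$ landing in each $L^{i}_{g}$, so no collision occurs and the row-sum identity above is all you need.
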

\begin{proof}
Through of the proof, we deal with the case that $\norm{B}_{D}$ is  nonzero, the other case is similar. We will endow $L^{i}(\R^{n+1},\R^{n})$ with a new norm $|.|_{i}$ in the following way: letting
\begin{equation}\label{cfg0}
c_{g,f}:=||\displaystyle \pi_{g(1)}\widehat{DT}\pi_{f(1)}||\ldots ||\pi_{g(i)}\widehat{DT}\pi_{f(i)}||,
\end{equation}
 where $g$ and $f$ $\in\mathcal{F}([i],\{E,F\})$ while
\begin{equation}\label{Cfg}
\pi_{g(j)}\widehat{DT}\pi_{f(j)}:=\left\{\begin{array}{rc}
A,&\mbox{if}  \hspace{0.1cm}  g(j)=E \hspace{0.1cm}\mbox{and} \hspace{0.1 cm}f(j)=E,\\
B, &\mbox{if}\hspace{0.1cm}  g(j)=E \hspace{0.1cm}\mbox{and} \hspace{0.1 cm} f(j)=F,\\
C,&\mbox{if} \hspace{0.1cm}  g(j)=F \hspace{0.1cm}\mbox{and} \hspace{0.1 cm}  f(j)=E,\\
1, &\mbox{if}\hspace{0.1cm}  g(j)=F \hspace{0.1cm}\mbox{and} \hspace{0.1 cm}  f(j)=F.
\end{array}\right.
\end{equation}
Next up, consider the matrix
\begin{equation}\label{DELTA}
\Delta:= [c_{g,f}] _{2^{i}\times 2^{i}}.
 \end{equation}
Notice that since, by assumption $||A||_{D},||B||_{D}$ and $||C||_{D}$ are nonzero, then, in view of \eqref{cfg0} and \eqref{Cfg} it follows that $c_{g,f}>0,$ where $g,f  \in\mathcal{F}([i],\{E,F\}).$ Thus, the matrix $\Delta$ is positive. Therefore, by  Perron-Frobenius  Theorem \ref{PFT} applied to matrix $\Delta,$ we get
  \begin{itemize}\label{DELTAP}
  \item[(a)]The matrix $\Delta$ has a positive eigenvalue $\lambda$.
  \item[(b)]The matrix $\Delta$ has an eigenvector $V$ with entries $k_{f}$ such that
  \begin{equation*}\label{DELTAPb}
  \displaystyle\sum_{f \in\mathcal{F}([i],\{E,F\})}k_{f}=1.
  \end{equation*}
  \item[(c)]An  estimate of $\lambda$ is given by inequalities
  \begin{equation}\label{DELTAPc}
   \min_{g}\sum_{f}c_{g,f} \leq  \lambda\leq \displaystyle \max_{g}\sum_{f}c_{g,f}.
  \end{equation}
  \end{itemize}
Let $b \in L^{i}(\R^{n+1},\R^{n}), b\neq0$. In view of  Lemma \ref{LD0}(a) we can write
\begin{equation}\label{b}
b=\displaystyle\sum_{f \in \mathcal{F}([i],\{E,F\})}b_{f},
\end{equation}
where $b_{f}$ is as in as in Definition \ref{Lf}. Thus, we can define the norm $|.|_{i}$ on $L^{i}(\R^{n+1},\R^{n})$ by
\begin{equation}\label{|b|}
|b|_{i}:=\sum_{f \in \mathcal{F}([i],\{E,F\})}k_{f}\norm{b_{f}}.
\end{equation}
It is easily to seen that $|.|_{i}$ is a norm on $L^{i}(\R^{n+1},\R^{n})$  equivalent to the norm $||.||.$ \\
We now will prove that
\begin{equation}\label{||i}
\displaystyle \frac{|M^{i}(b)|_{i}}{\displaystyle |b|_{i}}\leq  \max_{ \stackrel{ m,n \in \N}{ m+n=i}}\{(||A||_{D}+||B||_{D})^{m}(||C||_{D}+1)^{n}\}.
\end{equation}
Indeed, by definition one has $M^{i}(b)$ is $i$-linear map, then on account of Lemma \ref{LD0}(c) and \eqref{|b|} we have
\begin{equation}\label{|Mb|}
|M^{i}(b)|_{i}:=\sum_{f \in \mathcal{F}([i],\{E,F\})}k_{f}\norm{M^{i}(b)_{f}},
\end{equation}
where $M^{i}(b)_{f}$ as in Definition \ref{Lf}. But, by using Lemma \ref{LD0}(b) we have
\begin{equation}\label{||i1}
M^{i}(b)_{f}(x_{1},\ldots,x_{i})=M^{i}(b)(\pi_{f(1)}(x_{1}),\ldots,\pi_{f(i)}(x_{i}))
\end{equation}
and by assumption \eqref{|Mib.1|} we have
\begin{equation}\label{||i2}
 M^{i}(b)(x_{1},\ldots,x_{i})=b(\widehat{DT}x_{1},\ldots, \widehat{DT}x_{i}).
\end{equation}
Thus, combining  \eqref{||i2} and \eqref{||i1} we get
\begin{equation}\label{||i3}
M^{i}(b)_{f}(x_{1},\ldots,x_{i})=b(\widehat{DT}\pi_{f(1)}x_{1},\ldots, \widehat{DT}\pi_{f(i)}x_{i}).
\end{equation}
Furthermore, by using Lemma \ref{LD0}(a) we can write
\begin{equation}\label{||i4}
b(\widehat{DT}\pi_{f(1)},\ldots,\widehat{DT}\pi_{f(i)})=\displaystyle \sum_{g \in \mathcal{F}([i],\{E,F\})}b_{g}(\pi_{g(1)}\widehat{DT}\pi_{f(1)},\ldots,\pi_{g(i)}\widehat{DT}\pi_{f(i)}).
\end{equation}
Therefore, it follows from \eqref{||i4} and \eqref{||i3}, that
\begin{equation}\label{||i5}
M^{i}(b)_{f}(x_{1},\ldots,x_{i})=\displaystyle \sum_{g \in \mathcal{F}([i],\{E,F\})}b_{g}(\pi_{g(1)}\widehat{DT}\pi_{f(1)}(x_{1}),\ldots,\pi_{g(i)}\widehat{DT}\pi_{f(i)}(x_{i})).
\end{equation}
Hence, on account of  \eqref{|Mb|}  we get
\begin{equation}\label{|Mib1|}
|M^{i}(b)|_{i}\leq\displaystyle \sum_{f \in \mathcal{F}([i],\{E,F\})}k_{f}\sum_{g \in \mathcal{F}([i],\{E,F\})}\norm{b_{g}(\pi_{g(1)}\widehat{DT}\pi_{f(1)},\ldots,\pi_{g(i)}\widehat{DT}\pi_{f(i)})}.
\end{equation}
Since $b_{g}$ is $i$-linear map, we have
\begin{equation*}\label{|Mib1.1|}
||\displaystyle b_{g}(\pi_{g(1)}\widehat{DT}\pi_{f(1)},\ldots,\pi_{g(i)}\widehat{DT}\pi_{f(i)})||\leq \underbrace{||\displaystyle \pi_{g(1)}\widehat{DT}\pi_{f(1)}||\ldots ||\pi_{g(i)}\widehat{DT}\pi_{f(i)}||}_{:=c_{g,f}}.\displaystyle ||b_{g}||.
\end{equation*}
Consequently, Eq. $\eqref{|Mib1|}$ becomes
\begin{equation}\label{|Mib2|}
|M^{i}(b)|_{i}\leq \displaystyle\sum_{g \in \mathcal{F}([i],\{E,F\})}||b_{g}||\sum_{f \in \mathcal{F}([i],\{E,F\})}c_{g,f} k_{f}.
\end{equation}
Notice that, since $V=[k_{f}]_{_{f\in \mathcal{F}([i],\{E,F\})}}$ is an eigenvector of  matrix $\Delta,$ we have
\begin{equation}\label{|Mib2.0|}
\Delta[k_{f}]_{_{f\in \mathcal{F}([i],\{E,F\})}} =\lambda[k_{f}]_{_{f\in \mathcal{F}([i],\{E,F\})}},
\end{equation}
where $\Delta=[c_{g,f}]$ while $g$ and $f \in \mathcal{F}([i],\{E,F\}).$\\
Hence, if we fix $g \in \mathcal{F}([i],\{E,F\})$ it is easily seen that
\begin{equation}\label{|Mib2.1|}
\sum_{f \in \mathcal{F}([i],\{E,F\})}c_{g,f}k_{f}=\lambda k_{g}.
\end{equation}
Thus, by replacing \eqref{|Mib2.1|} into  \eqref{|Mib2|} we get
\begin{equation}\label{|Mib3|}
|M^{i}(b)|_{i}\leq \displaystyle \lambda \sum_{g \in \mathcal{F}([i],\{E,F\})}||b_{g}||k_{g}.
\end{equation}
Moreover, by definition we can write
 \begin{equation}\label{|Mib3.1|}
|b|_{i}=\sum_{g \in \mathcal{F}([i],\{E,F\})}||b_{g}||k_{g}.
\end{equation}
Therefore, from \eqref{|Mib3.1|} and \eqref{|Mib3|}  one obtains
\begin{equation}\label{|Mib4|}
|M^{i}(b)|_{i}\leq \lambda |b|_{i}.
\end{equation}

Through of the remainder of the proof, we denote by $\#(S)$ the cardinality of the set $S$.
\begin{af}\label{CScgf}
  Let $f$ and $g \in \mathcal{F}([i],\{E,F\} )$  such that $\#(g^{-1}(E))=m$ and $\#(g^{-1}(F))=n.$ Then the following equality holds:
\begin{equation}\label{Scgf}
\displaystyle \sum_{f \in \mathcal{F}([i],\{E,F\}}c_{g,f}=(||A||_{D}+||B||_{D})^{m}(||C||_{D}+1)^{n},
\end{equation}
where $c_{g,f}$ as in Eq. \eqref{cfg0}.
\end{af}
Proof of the Claim. Since $\#(g^{-1}(E))=m$ and $\#(g^{-1}(F))=n,$ then one can consider\\
$g^{-1}(E):=\{a_{1},a_{2},\ldots,a_{m}\}$ and $g^{-1}(F):=\{b_{1},b_{2},\ldots,b_{n}\}.$
Thus, by definition we have
\begin{equation}\label{Cfg1}
\pi_{g(a_{i})}\widehat{DT}\pi_{f(a_{i})}:=\left\{\begin{array}{rc}
A,&\mbox{if}  \hspace{0.1cm} \hspace{0.1 cm} f(a_{i})=E,\\
B, &\mbox{if}\hspace{0.1cm}  \hspace{0.1cm} f(a_{i})=F,
\end{array}\right.
\end{equation}
and
\begin{equation}\label{Cfg2}
\pi_{g(b_{i})}\widehat{DT}\pi_{f(b_{i})}:=\left\{\begin{array}{rc}
C,&\mbox{if}  \hspace{0.1cm} \hspace{0.1 cm} f(b_{i})=E,\\
1, &\mbox{if}\hspace{0.1cm}  \hspace{0.1cm} f(b_{i})=F.
\end{array}\right.
\end{equation}
Now, consider integers $s,t$ with $0 \leq s \leq m,$ $0\leq t \leq n$ and take $f\in \mathcal{F}([i],\{E,F\})$ such that
\begin{equation*}
\#(g^{-}(E) \cap f^{-1}(E))=s \quad \mbox{and} \quad\#(g^{-}(F)\cap f^{-1}(E))=t.
\end{equation*}
Then, since $c_{g,f}:=||\displaystyle \pi_{g(1)}\widehat{DT}\pi_{f(1)}||\ldots ||\pi_{g(i)}\widehat{DT}\pi_{f(i)}||,$ it follows  from \eqref{Cfg2} and \eqref{Cfg1} that
\begin{equation}\label{cfg4}
c_{g,f}=||A||_{D}^{s}||B||_{D}^{m-s}||C||_{D}^{t}1^{n-t}.
\end{equation}
In addition, since  $\#{g^{-1}(E)}=m$ and $\#{g^{-1}(F)}=n,$ it is not difficult to see that the cardinality of the sets
\begin{equation}\label{Cards}
\mathcal{F}_{g,s}([i],\{E,F\}):=\{f\in \mathcal{F}([i],\{E,F\}):card(g^{-}(E) \cap f^{-1}(E))=s \}
\end{equation}
and
\begin{equation}\label{Cardt}
\mathcal{F}_{g,t}([i],\{E,F\}):=\{f\in \mathcal{F}([i],\{E,F\}):card(g^{-}(E) \cap f^{-1}(E))=t \}
\end{equation}
are  ${n \choose t}$ and  ${m \choose s},$ respectively. Thus, from \eqref{Cardt} and \eqref{Cards}, on account of Rule of Product ~\cite[p. 13]{Cohen} we deduce that the cardinality of
\begin{equation}\label{Cardst}
\mathcal{F}_{g,st}([i],\{E,F\}):= \mathcal{F}_{g,s}([i],\{E,F\})\cap \mathcal{F}_{g,t}([i],\{E,F\})
\end{equation}
is ${m \choose s}.{n \choose t}.$ Finally, notice that
\begin{equation*}\label{Cardst1}
\mathcal{F}([i],\{E,F\})=\displaystyle \biguplus_{\stackrel{0\leq s\leq m}{0\leq t\leq n}} \mathcal{F}_{g,st}([i],\{E,F\}).
\end{equation*}
Whence, on account of \eqref{cfg4} and Binomial Theorem we get the following chain of equalities
\begin{eqnarray*}\label{Cardst2}
\displaystyle \sum_{f \in\mathcal{F}([i],\{E,F\})}c_{g,f}&=&\displaystyle \sum_{s=0}^{m}\sum_{t=0}^{n}\displaystyle \left(\sum_{f \in \mathcal{F}_{g,st}([i],\{E,F\})}c_{g,f}\right) \nonumber\\ \nonumber\\
&=&\displaystyle \sum_{s=0}^{m}\sum_{t=0}^{n}\displaystyle {m \choose s}.{n \choose t}||A||_{D}^{s}||B||_{D}^{m-s}||C||_{D}^{t}1^{n-t}\nonumber\\ \nonumber \\
&=&\displaystyle (||A||_{D}+||B||_{D})^{m}(||C||_{D}+1)^{n}.
\end{eqnarray*}
Thus Claim \ref{CScgf} is proved.

Finally, from \eqref{DELTAPc} and  Claim \ref{CScgf} we conclude that
 \begin{equation}\label{Claim2}
\displaystyle \frac{|M^{i}(b)|_{i}}{\displaystyle |b|_{i}}\leq  \max_{\stackrel{ m,n \in \N}{ m+n=i}}\{(||A||_{D}+||B||_{D})^{m}(||C||_{D}+1)^{n}\},
\end{equation}
for all $b\in L^{i}(\R^{n+1},\R^{n}),$ which concludes the proof of the lemma.
\end{proof}
Now, we are going to prove Proposition \ref{pfpsi0} mentioned in the beginning of the sub-section, which we recall here. Before that is important recall that  $$\mathcal{D}_{i}:=\{\nu_{i}: D \to L^{i}_{s}(\R^{n+1},\R^{n}):\nu_{i}(x,0)=0; \mbox{$v_{i}$ is continuous}\}.$$
\begin{prop}\label{pfpsi0.1}
 Under the notation of Definitions \ref{Dis2} and $\ref{psi}$. Let  $1\leq i \leq k$ be a integer and fix a point   $(\overline{\nu}_{0},\ldots,\overline{\nu}_{i-1}) \in \mathcal{D}_{0}\times \mathcal{D}_{1}\times \ldots \times \mathcal{D}_{i-1}.$ Then, the space $D_{i}$ can be endowed with a norm $|.|_{i,D}$ equivalent to the original norm $||.||_{D}$  so that the function $$\Psi^{i}(\overline{\nu}_{0},\ldots,\overline{\nu}_{i-1},\bullet):\mathcal{D}_{i} \to \mathcal{D}_{i}$$  it is a contraction with constant of contraction independent of the point $(\overline{\nu}_{0},\overline{\nu}_{1},\ldots,\overline{\nu}_{i-1}).$
\end{prop}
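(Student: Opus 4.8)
The plan is to pin down exactly how $\Psi^{i}$ depends on its last slot $\overline{\nu}_{i}$. Inspecting Definitions \ref{U1.N}, \ref{U2.N} and \ref{U3.N}, in $\Psi^{i}=U^{i}_{1}+U^{i}_{2}+U^{i}_{3}$ (with $U^{i}_{3}$ absent when $i=1$) the function $\overline{\nu}_{i}$ enters \emph{only} the two summands carrying $\mathcal{DC}^{(i,i,i)}(\overline{\nu}_{i},T)$ — one inside $U^{i}_{1}$, one inside $U^{i}_{2}$ — and it enters them \emph{linearly}; every other summand depends only on $\overline{\nu}_{0},\dots,\overline{\nu}_{i-1}$. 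So, fixing $(\overline{\nu}_{0},\dots,\overline{\nu}_{i-1})$ and setting $\mu:=\overline{\nu}_{i}-\overline{\nu}_{i}'$, all other terms cancel in the difference, and for $y\neq0$ (reading $Sym^{1}$ as the identity when $i=1$)
\begin{multline*}
\Psi^{i}(\overline{\nu}_{0},\dots,\overline{\nu}_{i-1},\overline{\nu}_{i})-\Psi^{i}(\overline{\nu}_{0},\dots,\overline{\nu}_{i-1},\overline{\nu}_{i}')\\
=\frac{(\overline{\nu}_{0}\circ TA-C)\,i!}{(1-\overline{\nu}_{0}\circ TB)^{2}}\,Sym^{i}\circ\left(\mathcal{DC}^{(i,i,i)}(\mu,T)B\right)+\frac{Sym^{i}\circ\left(\mathcal{DC}^{(i,i,i)}(\mu,T)A\right)}{1-\overline{\nu}_{0}\circ TB},
\end{multline*}
while at $y=0$ both sides vanish by Proposition \ref{wdpsi}. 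Hence the difference is linear in $\mu$ and the whole statement reduces to one norm inequality.

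Next I would linearize $\mathcal{DC}^{(i,i,i)}$: from Definition \ref{A,B,C} and \eqref{DT:1} one has $DT=(\partial_{y}G)\,\widehat{DT}$, the scalar $\partial_{y}G$ times the matrix $\widehat{DT}$, so \eqref{Dkkk}, applied with $\mu$ in the role of $D^{i}\overline{\nu}$, gives at every point $\mathcal{DC}^{(i,i,i)}(\mu,T)=i!\,(\partial_{y}G)^{i}\,M^{i}(\mu\circ T)$ with $M^{i}$ the operator of Lemma \ref{|Mib|}. I would then equip $\mathcal{D}_{i}$ with $|\nu_{i}|_{i,D}:=\sup_{D}|\nu_{i}|_{i}$, where $|\cdot|_{i}$ is the norm of Lemma \ref{|Mib|}; as $L^{i}_{s}(\R^{n+1},\R^{n})$ is finite dimensional, $|\cdot|_{i,D}$ is equivalent to $\|\cdot\|_{D}$. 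Two properties of $|\cdot|_{i}$ are relevant: its weights $k_{f}$ are the entries of the Perron eigenvector of the matrix $\Delta$ of \eqref{DELTA}, and $\Delta$ is invariant under simultaneous permutation of its rows and columns, so $k_{f}$ depends only on $\#(f^{-1}(E))$; hence $|\cdot|_{i}$ is permutation invariant — giving $|Sym^{i}\circ(\cdot)|_{i}\leq|\cdot|_{i}$ — and left or right multiplication by $\overline{\nu}_{0}\circ TA-C$, by $A$ or by $B$ commutes with the projections defining $|\cdot|_{i}$, so it costs only the respective norm factor.

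The estimate is then mechanical. Substituting $\mathcal{DC}^{(i,i,i)}(\mu,T)=i!(\partial_{y}G)^{i}M^{i}(\mu\circ T)$, bounding $|M^{i}(\mu\circ T)|_{i}\leq\theta^{i}|\mu|_{i,D}$ with $\theta:=\max(\|A\|_{D}+\|B\|_{D},\,\|C\|_{D}+1)\geq1$ (this is Lemma \ref{|Mib|}, since $\max_{m+n=i}\{(\|A\|_{D}+\|B\|_{D})^{m}(\|C\|_{D}+1)^{n}\}=\theta^{i}$), using $\|\overline{\nu}_{0}\circ TA-C\|\leq L\|A\|_{D}+\|C\|_{D}$ and $|1-\overline{\nu}_{0}\circ TB|\geq1-L\|B\|_{D}>0$ uniformly over $\mathcal{A}_{L}$, and invoking the identities $L\|A\|_{D}+\|C\|_{D}=L(1-L\|B\|_{D})$, $2(1-L\|B\|_{D})=1+\|A\|_{D}+\sqrt{(1-\|A\|_{D})^{2}-4\|B\|_{D}\|C\|_{D}}$ and $(1+\|A\|_{D})^{2}-\big((1-\|A\|_{D})^{2}-4\|B\|_{D}\|C\|_{D}\big)=4(\|A\|_{D}+\|C\|_{D}\|B\|_{D})$, all consequences of \eqref{L}, one arrives at
\begin{equation*}
|\Psi^{i}(\overline{\nu}_{0},\dots,\overline{\nu}_{i-1},\overline{\nu}_{i})-\Psi^{i}(\overline{\nu}_{0},\dots,\overline{\nu}_{i-1},\overline{\nu}_{i}')|_{i,D}\leq\frac{(2\,i!)^{2}\big(\|A\|_{D}+\|C\|_{D}\|B\|_{D}\big)\,(\theta\,\|\partial_{y}G\|_{D})^{i}}{\big(1+\|A\|_{D}+\sqrt{(1-\|A\|_{D})^{2}-4\|B\|_{D}\|C\|_{D}}\,\big)^{2}}\,|\mu|_{i,D}.
\end{equation*}
The coefficient does not involve $(\overline{\nu}_{0},\dots,\overline{\nu}_{i-1})$, and — remembering that $\theta^{i}=\max_{m+n=i}\{(\|A\|_{D}+\|B\|_{D})^{m}(\|C\|_{D}+1)^{n}\}$ and that $(\|\partial_{y}G\|_{D})^{-j}$ stands in the denominators of $(L_{3})$ — it is \emph{exactly} the left-hand side of $(L_{3})$(a) when $i=1$ and of $(L_{3})$(b) when $i=k$, hence $<1$ in those two cases.

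The genuinely delicate point — and the main obstacle — is the intermediate range $2\leq i<k$, which Assumption \ref{Assumptions} reaches only indirectly; this is where the apparently ad hoc hypothesis in $(L_{3})$(b) does its work. The coefficient at order $i$ equals $c\,(i!)^{2}(\theta\|\partial_{y}G\|_{D})^{i}$ with $c$ independent of $i$, so it suffices to show $(i!)^{2}(\theta\|\partial_{y}G\|_{D})^{i}\leq(k!)^{2}(\theta\|\partial_{y}G\|_{D})^{k}$. If $\theta\|\partial_{y}G\|_{D}\geq1$ this is immediate from $i\leq k$. If $\theta\|\partial_{y}G\|_{D}<1$, then $\|\partial_{y}G\|_{D}\geq1/4$ yields $\theta\|\partial_{y}G\|_{D}\geq1/4$ directly (as $\theta\geq1$), while $\|\partial_{x}F\|_{D}\geq1/4$ yields the same via $\|\partial_{x}F\|_{D}\leq\|A\|_{D}\|\partial_{y}G\|_{D}\leq\theta\|\partial_{y}G\|_{D}$; hence $(\theta\|\partial_{y}G\|_{D})^{i}\leq4^{k-i}(\theta\|\partial_{y}G\|_{D})^{k}$, and $i!\,2^{k-i}\leq k!$ for $1\leq i\leq k$ gives $(i!)^{2}4^{k-i}\leq(k!)^{2}$, whence the claimed domination. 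Thus for every $1\leq i\leq k$ the coefficient is bounded by the left-hand side of $(L_{3})$(a) or $(L_{3})$(b) and is therefore $<1$, so $\Psi^{i}(\overline{\nu}_{0},\dots,\overline{\nu}_{i-1},\bullet)$ is a contraction of $(\mathcal{D}_{i},|\cdot|_{i,D})$ with contraction constant independent of $(\overline{\nu}_{0},\dots,\overline{\nu}_{i-1})$, which is the assertion. The hard part throughout is not the analysis but the algebraic bookkeeping that makes the raw bound coincide with the threshold in $(L_{3})$, together with the realization that the constant $1/4$ there is precisely what controls the orders $i<k$.
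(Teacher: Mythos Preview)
Your proof is correct and follows essentially the same route as the paper's: identify that $\overline{\nu}_{i}$ enters $\Psi^{i}$ only through the two $\mathcal{DC}^{(i,i,i)}$ terms, rewrite $\mathcal{DC}^{(i,i,i)}(\mu,T)=i!(\partial_{y}G)^{i}M^{i}(\mu\circ T)$, equip $\mathcal{D}_{i}$ with the Perron--Frobenius norm of Lemma \ref{|Mib|}, and then use \eqref{L} to collapse the bound into the expression $\Theta(i)$ appearing in $(L_{3})$. The paper's proof does exactly this, arriving at the same contraction constant $\Theta(i)$ and then simply invoking Assumption \ref{Assumptions}$(L_{3})$ to conclude $\Theta(i)<1$ for all $1\leq i\leq k$.

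Where you go beyond the paper is in two places. First, you justify explicitly that $|Sym^{i}(\cdot)|_{i}\leq|\cdot|_{i}$ via the permutation symmetry of the Perron eigenvector, and that post-multiplication by $A$, $B$, or the scalar prefactors costs only the corresponding norm factor; the paper uses these facts without comment. Second, and more substantively, you actually work out why $(L_{3})$ forces $\Theta(i)<1$ for the intermediate range $2\leq i<k$: the paper merely asserts this (``by using Assumption \ref{Assumptions}$(L_{3})$ one can see that $\Theta(i)<1$''), whereas you show that the side hypothesis $\|\partial_{y}G\|_{D}\geq 1/4$ or $\|\partial_{x}F\|_{D}\geq 1/4$ in $(L_{3})$(b) forces $\theta\|\partial_{y}G\|_{D}\geq 1/4$, and combine this with $i!\,2^{k-i}\leq k!$ to get $\Theta(i)\leq\Theta(k)<1$. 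This is precisely the role of the $1/4$ constant, and your argument makes the paper's implicit step explicit and rigorous.
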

\begin{proof}[\textbf{Proof of Proposition \ref{pfpsi0.1}}] Let $\nu_{i}\in \mathcal{D}_{i}.$  We define its norm to be
\begin{equation}\label{||i,D}
|\nu_{i}|_{i,D}:=\sup\{|\nu_{i}(x,y)|_{i}:(x,y)\in D\},
\end{equation}
where $|.|_{i}$ is the norm $|.|_{i}$ on $L^{i}(\R^{n+1},\R^{n})$ as in Lemma \ref{|Mib|}. It is easy to check that $|.|_{i,D}$ is a norm on
$\mathcal{D}_{i}$  equivalent to $\norm{.}_{D}.$

 Let $\nu_{i}^{1}=\Psi^{i}(\overline{\nu}_{0},\overline{\nu}_{1},\ldots,\overline{\nu}_{i-1},\mu^{1})$ and  $\nu_{i}^{2}=\Psi^{i}(\overline{\nu}_{0},\overline{\nu}_{1},\ldots,\overline{\nu}_{i-1},\mu^{2}),$ where $ \mu^{1}, \mu^{2}\in \mathcal{D}_{i}$.\\
From Definition \ref{psi} one can deduce that
\begin{eqnarray}\label{psi0.2}
\nu_{i}^{1}-\nu_{i}^{2}&=&\displaystyle (\overline{\nu}_{0}\circ TA-C)i!(1-\overline{\nu}_{0}\circ TB)^{-2}.\mathcal{DC}^{(i,i,i)}((\mu^{1}-\mu^{2}),T)B \nonumber\\
&+&\displaystyle (1-\overline{\nu}_{0}\circ B)^{-1}\mathcal{DC}^{(i,i,i)}((\mu^{1}-\mu^{2}),T)A.
\end{eqnarray}
Recall that, by Eq. \eqref{Dkkk} we have
\begin{equation}\label{Dkkk1}
\mathcal{DC}^{(i,i,i)}((\mu^{1}-\mu^{2}),T)(x,y):=i!\partial_{y}G(x,y) (\mu^{1}-\mu^{2}) \circ T(x,y) \underbrace{\widehat{DT}(x,y)\ldots \widehat{DT}(x,y)}_{k-times},
\end{equation}
where $\widehat{DT}(x,y)$ is as in Eq. \eqref{DT:1}. Hence, in view of \eqref{psi0.2}, \eqref{||i,D} and Lemma \ref{|Mib|} we get
\begin{eqnarray}\label{DT200}
\left|\nu_{i}^{1}-\nu_{i}^{2}\right|_{i,D}&\leq& \left|(\mu_{i}^{1}-\mu_{i}^{2})\right|_{i,D}\frac{(L||A||_{D}+||C||_{D})||B||_{D}}{\norm{\partial_{y}G(x,y)}^{-i}(1-L||B||_{D})^{2}} (i!)^{2}\Lambda(i)\nonumber\\ \nonumber\\
&+&\left|(\mu^{1}-\mu^{2})\right|_{i,D}\frac{||A||_{D}(1-L||B||_{D})}{||\partial_{y}G(x,y)||^{-i}(1-L||B||_{D})^{2}} (i!)^{2}\Lambda(i)\nonumber\\ \nonumber \\
&=&(i!)^{2}\left|(\mu^{1}-\mu^{2})\right|_{i,D} \frac{||A||_{D}+||C||_{D}||B||_{D}}{||\partial_{y}G(x,y)||^{-i}(1-L||B||_{D})^{2}}\Lambda(i),
\end{eqnarray}
where
\begin{equation*}\label{Max:1}
\Lambda(i):=\max_{\stackrel{ m,n \in \N}{ m+n=i}}\{(||A||_{D}+||B||_{D})^{m}(||C||_{D}+1)^{n}\}.
\end{equation*}
But, from Eq. \eqref{L} we have $2||B||_{D}L:=1-||A||_{D}-\sqrt{(1-||A||_{D})^{2}-4||B||_{D}||C||_{D}}.$ Hence,  Eq. \eqref{DT200} becomes
\begin{eqnarray}\label{DT2.0}
\left|\nu_{i}^{1}-\nu_{i}^{2}\right|_{i,D} &\leq& \left|(\mu^{1}-\mu^{2})\right|_{i,D}\Theta(i),
\end{eqnarray}
where
\begin{equation*}
\Theta(i):=\frac{\left(||A||_{D}+||C||_{D}||B||_{D}\right)\displaystyle \max_{m+n=i}\{(||A||_{D}+||B||_{D})^{m}(||C||_{D}+1)^{n}\}}{(2i!)^{-2}||\partial_{y}G||^{-i}\left(1+||A||_{D}+\sqrt{(1-||A||_{D})^{2}-4||B||_{D}||C||_{D}}\right)^{2}}.
\end{equation*}
Moreover, by using Assumption \ref{Assumptions}($L_{3}$) one can see that
\begin{equation}\label{DT2.1}
\Theta(i)<1, \quad 1\leq i \leq k.
\end{equation}
Therefore, on account of Eq. \eqref{DT2.0} one obtains that the function
$$\Psi^{i}(\overline{\nu}_{0},\overline{\nu}_{1},\ldots,\overline{\nu}_{i-1},\bullet):\mathcal{D}_{i} \to \mathcal{D}_{i}$$ is a contraction independent of the point $(\overline{\nu}_{0},\overline{\nu}_{1},\ldots,\overline{\nu}_{i-1}),$ which  finishes the proof.
\end{proof}
Before proceeding to state and prove the following lemma, it is convenient to introduce some useful notation. Consider the following norm-spaces $X_{1},\ldots,X_{n}$ with norm $\norm{.}_{i},$ for $0\leq i\leq k$ respectively and let $X:=X_{1}\times \ldots \times X_{n}.$ Then the norm of the space $X$ will be denoted by $\norm{.}_{_{X}}$ and defined by $\norm{.}_{_{X}}:=\max\{\norm{.}_{i}:1\leq i\leq n\}.$
\begin{lema}\label{l3.0}
Under Definition \ref{psi}. Let $0\leq i\leq k$ be a integer. Suppose that the sets $\mathcal{D}_{j},$ for $1\leq j\leq i$ are endowed with the norm $|.|_{j,D}$ from Proposition \ref{pfpsi0} and the set $X_{i}:=\mathcal{D}_{0}\times \mathcal{D}_{1}\times \ldots \times \mathcal{D}_{i}$ is endowed with the norm $|.|_{X_{i}}.$ Then, the family of maps $\Psi^{(i,\overline{\nu}_{i})}:X_{i-1}\to \mathcal{D}_{i}$ given by $\Psi^{(i,\overline{\nu}_{i})}(\nu_{0},\nu_{1},\ldots, \nu_{i-1})=\Psi^{i}(\nu_{0},\nu_{1},\ldots, \nu_{i-1},\overline{\nu}_{i})$ depends on $\overline{\nu}_{i}$ continuously in the following sense: if $(\nu_{0}^{n},\nu_{1}^{n},\ldots, \nu_{i-1}^{n}) \to (\nu_{0},\nu_{1},\ldots, \nu_{i-1})$ as $n \to \infty$ in the space $X_{i-1},$ then $\Psi^{i}(\nu_{0}^{n},\nu_{1}^{n},\ldots, \nu_{i-1}^{n},\overline{\nu}_{i}) \to \Psi^{i}(\nu_{0},\nu_{1},\ldots, \nu_{i-1},\overline{\nu}_{i})$ in the space $\mathcal{D}_{i}$ for any fixed $\overline{\nu}_{i} \in \mathcal{D}_{i}.$
\end{lema}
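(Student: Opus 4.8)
The plan is to read off from Definition \ref{psi} together with Definitions \ref{U1.N}, \ref{U2.N} and \ref{U3.N} that, for fixed $\overline{\nu}_{i}$, the map $\Psi^{(i,\overline{\nu}_{i})}$ is a \emph{finite} algebraic combination of three kinds of factors, and that this combination is continuous for the sup norm away from $D_{0}$ and uniformly small near $D_{0}$. Concretely, $\Psi^{i}=U_{1}^{i}+U_{2}^{i}+U_{3}^{i}$, and each of $U_{1}^{i},U_{2}^{i},U_{3}^{i}$ — and each of the operators $\mathcal{DC}$, $\mathcal{DCP}$, $\mathcal{DICP}$, $\phi^{(q,i-q)}$ entering them — is a finite sum of products of: (i) the composed functions $\overline{\nu}_{q}\circ T$, $0\leq q\leq i-1$, and the scalar factors $(1-\overline{\nu}_{0}\circ TB)^{-m}$; (ii) the fixed functions $A,B,C$ and their derivatives $D^{r}A,D^{r}B,D^{r}C$; (iii) the fixed derivatives $D^{r}T$ of $T$. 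The first observation is that $\mu\mapsto\mu\circ T$ is linear and non-expanding for the sup norm, $\norm{\mu\circ T}_{D^{*}}\leq\norm{\mu}_{D}$, and that for every $\overline{\nu}_{0}\in\mathcal{A}_{L}$ one has $\norm{\overline{\nu}_{0}\circ TB}_{D^{*}}\leq L\norm{B}_{D}<1$ — by the choice of $L$ in \eqref{L} and Assumption \ref{Assumptions}$(L_{2})$ — so that the factors $(1-\overline{\nu}_{0}\circ TB)^{-m}$ are uniformly bounded and depend Lipschitz-continuously on $\overline{\nu}_{0}$ for the sup norm, since $\zeta\mapsto(1-\zeta)^{-m}$ is Lipschitz on the disc $\{\,|\zeta|\leq L\norm{B}_{D}\,\}$. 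As multiplication of uniformly bounded continuous functions is jointly continuous for the sup norm, I would conclude that on every compact set $D\cap\{|y|\geq\delta\}$, where $D^{r}T,D^{r}A,D^{r}B,D^{r}C$ are bounded, a convergence $(\nu_{0}^{n},\dots,\nu_{i-1}^{n})\to(\nu_{0},\dots,\nu_{i-1})$ in $X_{i-1}$ forces $\Psi^{i}(\nu_{0}^{n},\dots,\nu_{i-1}^{n},\overline{\nu}_{i})\to\Psi^{i}(\nu_{0},\dots,\nu_{i-1},\overline{\nu}_{i})$ uniformly on $D\cap\{|y|\geq\delta\}$.

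It then remains to control a neighbourhood of $D_{0}$, where the $D^{r}T$ blow up. Here I would use that the convergent sequence is bounded, $M:=\sup_{n}\max_{0\leq j\leq i-1}|\nu_{j}^{n}|_{j,D}<\infty$, and that the estimates in the proof of Lemma \ref{CU123} used of the $\overline{\nu}$-arguments only their sup-norm bound, together with the decay estimates of Corollary \ref{DAC} for $D^{r}A,D^{r}B,D^{r}C$, of Corollary \ref{NC1} for $D^{r}T$, and the fact that $1-\overline{\nu}_{0}\circ TB$ stays uniformly away from $0$. Re-running those estimates verbatim with the bounded family $(\nu_{0}^{n},\dots,\nu_{i-1}^{n},\overline{\nu}_{i})$ — in the role played there by $(\overline{\nu},D\overline{\nu},\dots,D^{k}\overline{\nu})$ — yields a neighbourhood $\widetilde{U}$ of $D_{0}$ and a constant $\mathrm{const}(M)$ independent of $n$ with
\[
\norm{\Psi^{i}(\nu_{0}^{n},\dots,\nu_{i-1}^{n},\overline{\nu}_{i})(a,b)}\leq\mathrm{const}(M)\,|b|^{\,\gamma-i+1}\qquad\text{on }\widetilde{U},
\]
and likewise for the limit. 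Since $\gamma>k-1\geq i-1$, the exponent $\gamma-i+1$ is positive; hence, given $\varepsilon>0$, there is $\delta>0$ with $D\cap\{|y|<\delta\}\subset\widetilde{U}$ and $\mathrm{const}(M)\,\delta^{\,\gamma-i+1}<\varepsilon/2$, so that $\norm{\Psi^{i}(\nu^{n},\overline{\nu}_{i})-\Psi^{i}(\nu,\overline{\nu}_{i})}<\varepsilon$ on $D\cap\{|y|<\delta\}$ for every $n$. Combining with the uniform convergence on $D\cap\{|y|\geq\delta\}$ from the first paragraph gives $\norm{\Psi^{i}(\nu^{n},\overline{\nu}_{i})-\Psi^{i}(\nu,\overline{\nu}_{i})}_{D}\to0$; as $|.|_{i,D}$ is equivalent to $\norm{.}_{D}$ (Proposition \ref{pfpsi0}), this is the desired convergence in $\mathcal{D}_{i}$.

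The only point that is more than routine bookkeeping is the uniform-in-$n$ estimate near $D_{0}$: one must check that the proof of Lemma \ref{CU123} goes through with a \emph{generic} bounded family in place of the successive derivatives of a single $C^{k}$ map, with a constant depending only on $M$, $L$, $\norm{B}_{D}$ and the data of Assumption \ref{Assumptions}. I expect this to be the main obstacle, and it is settled by noting that in that proof the $\overline{\nu}$-arguments enter every inequality only through bounded factors, never through being derivatives of one function. Granted the lemma, it supplies — together with Proposition \ref{pfpsi0.1} — hypotheses (b) and (c) of the Fiber Contraction Theorem \ref{FCT}, which then yields Proposition \ref{l3}.
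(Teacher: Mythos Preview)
Your proof is correct. The paper's own proof is a single sentence --- ``The proof follows from Definitions \ref{psi}, \ref{U3.N}, \ref{U2.N} and \ref{U1.N}'' --- so your argument is a detailed elaboration of what the paper leaves implicit. You correctly isolate the only non-routine point: near $D_{0}$ the factors $D^{r}T$ blow up, so sup-norm continuity is not automatic from the algebraic structure alone. Your resolution --- splitting $D$ into $\{|y|\geq\delta\}$ (where all coefficients are bounded and the polynomial/rational dependence on $\nu_{j}\circ T$ and $(1-\nu_{0}\circ TB)^{-m}$ gives local Lipschitz continuity) and $\{|y|<\delta\}$ (where the decay estimates of Corollaries \ref{DAC} and \ref{NC1} give a uniform bound $\mathrm{const}(M)\,|y|^{\,>0}$) --- is exactly the mechanism already used in the well-definedness argument of Proposition \ref{wdpsi}, and your observation that those estimates depend on the $\overline{\nu}$-arguments only through their sup-norm bounds (together with the uniform lower bound $|1-\nu_{0}\circ TB|\geq 1-L\|B\|_{D}>0$) is correct.
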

\begin{proof}
The proof follows from Definitions \ref{psi}, \ref{U3.N}, \ref{U2.N} and \ref{U1.N}.
\end{proof}
We are going to prove Proposition \ref{pfpsi0}, which we recall here.
\begin{prop}\label{l3.1}
 Assume the notation of Lemma \ref{l3.0}. Then, the function
\begin{equation*}
\widetilde{N}_{i}:X_{i} \to X_{i}
\end{equation*}
 defined by
\begin{equation*}\label{Ni.2}
\widetilde{N}_{i}(\overline{\nu}_{0},\overline{v}_{1},\ldots,\overline{\nu}_{i})=(\Gamma(\overline{\nu}_{0}),\Psi^{1}(\overline{\nu}_{0},\overline{\nu}_{1}),\ldots,\Psi^{i}(\overline{\nu}_{0},\overline{\nu}_{1},\ldots,\overline{\nu}_{i}))
\end{equation*}
has a global attracting fixed point $(A_{0},A_{1},\ldots,A_{i}).$
\end{prop}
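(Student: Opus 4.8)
The plan is to apply the Fiber Contraction Theorem (Theorem \ref{FCT}) inductively on $i$, peeling off one derivative level at a time. The base case $i=0$ is exactly Proposition \ref{fgamma}: the operator $\Gamma:\mathcal{D}_0=\mathcal{A}_L\to\mathcal{A}_L$ is a contraction, hence has a global attracting fixed point $A_0=\nu^*$. For the inductive step, suppose $\widetilde N_{i-1}:X_{i-1}\to X_{i-1}$ has a global attracting fixed point $(A_0,\dots,A_{i-1})$. Write $\widetilde N_i$ in the skew-product form required by Theorem \ref{FCT}, namely $\widetilde N_i(\overline{\nu}_0,\dots,\overline{\nu}_i)=(\,\widetilde N_{i-1}(\overline{\nu}_0,\dots,\overline{\nu}_{i-1})\,,\,\Psi^i(\overline{\nu}_0,\dots,\overline{\nu}_i)\,)$, so that $X=X_{i-1}$ plays the role of the base and $Y=\mathcal{D}_i$ the role of the fiber, with the fiber map $\Psi(x,y)=\Psi^i(\overline{\nu}_0,\dots,\overline{\nu}_{i-1},y)$.

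Next I would verify the three hypotheses of Theorem \ref{FCT}. Hypothesis (a) — that the base map $\widetilde N_{i-1}$ has a global attracting fixed point — is precisely the inductive hypothesis. Hypothesis (c) — that for each fixed base point the fiber map $\Psi^i(\overline{\nu}_0,\dots,\overline{\nu}_{i-1},\bullet):\mathcal{D}_i\to\mathcal{D}_i$ is a $\lambda$-contraction with $\lambda$ independent of the base point — is exactly the content of Proposition \ref{pfpsi0}/\ref{pfpsi0.1}, using the norm $|.|_{i,D}$ built from the Perron--Frobenius eigenvector in Lemma \ref{|Mib|} and the estimate $\Theta(i)<1$ coming from Assumption \ref{Assumptions}$(L_3)$. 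Hypothesis (b) — continuous dependence of the fiber family on the base variable — is Lemma \ref{l3.0}. Once all three are checked, Theorem \ref{FCT} yields that $\widetilde N_i$ has a global attracting fixed point $(A_0,\dots,A_{i-1},A_i)$, where $A_i$ is the unique fixed point of $\Psi^i(A_0,\dots,A_{i-1},\bullet)$; this closes the induction and proves the proposition for all $0\le i\le k$.

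One technical point I would be careful about is the compatibility of the metrics: Theorem \ref{FCT} is stated for a genuine product of complete metric spaces, so I would note that each $\mathcal{D}_j$ is complete in $|.|_{j,D}$ (it is a closed subspace of bounded continuous $L^j_s$-valued maps vanishing on $D_0$, and $|.|_{j,D}$ is equivalent to the sup norm by Lemma \ref{|Mib|} and Remark \ref{DALC}), and that $X_i=\prod_{j=0}^i\mathcal{D}_j$ with the max-norm $|.|_{X_i}$ is therefore complete. I would also remark that the attracting fixed point is automatically unique, and that by Proposition \ref{wdpsi}, when the base component is $\nu^*$ and is of class $C^i$, the fiber fixed point coincides with $D^i\Gamma(\nu^*)$ — though strictly this last observation is not needed for the present statement and belongs to the deduction of Proposition \ref{propstep2}.

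The main obstacle is really concentrated in the already-established Proposition \ref{pfpsi0}: obtaining a uniform contraction constant for the fiber map requires the clever change of norm via the Perron--Frobenius eigenvalue $\lambda$ of the matrix $\Delta=[c_{g,f}]$ and the sharp bound $\lambda\le\max_{m+n=i}(\|A\|_D+\|B\|_D)^m(\|C\|_D+1)^n$, which is exactly what makes $\Theta(i)<1$ follow from $(L_3)$. With that in hand the proof of Proposition \ref{l3}/\ref{l3.1} is essentially a bookkeeping exercise in quoting Theorem \ref{FCT} correctly, so I expect no further serious difficulty beyond writing the induction cleanly.
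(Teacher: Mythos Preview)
Your proposal is correct and follows essentially the same approach as the paper: induction on $i$, writing $\widetilde N_i=(\widetilde N_{i-1},\Psi^i)$ in skew-product form and invoking the Fiber Contraction Theorem \ref{FCT} with the inductive hypothesis for (a), Proposition \ref{pfpsi0} for the uniform fiber contraction (c), and Lemma \ref{l3.0} for the continuity (b). Your extra remarks on completeness of $(\mathcal D_j,|\cdot|_{j,D})$ and of $X_i$ are a welcome clarification that the paper leaves implicit.
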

\begin{proof}
We proceed by induction on $i$. Suppose that the statement holds for $j$ with $0 \leq j<i$. We wish to show the statement holds for $i.$ To do this; will be proved that the map $\widetilde{N}_{i}=(\widetilde{N}_{i-1},\Psi^{i}):X_{i-1}\times Y \to X\times Y,$ where $X_{i-1}=\mathcal{D}_{0}\times \mathcal{D}_{1}\times \ldots \times \mathcal{D}_{i-1}$ and $Y=\mathcal{D}_{i},$ satisfies the three conditions  conditions of  Fiber Contraction Theorem \ref{FCT}. Indeed,
\begin{itemize}
\item[(a)] By inductive hypothesis the function $\widetilde{N}_{i-1}:X_{i-1}\to X_{i-1}$ has a global attracting fixed point $(A_{0},\ldots,A_{i-1})\in X_{i-1}$.
     \item[(b)] By using  Theorem \ref{pfpsi0} applied to $(A_{0},\ldots,A_{i-1}),$ we have that $$\Psi^{i}(A_{0},\ldots,A_{i-1}, \bullet):\mathcal{D}_{i} \to \mathcal{D}_{i}$$ is a contraction. Then by the  Banach fixed-point theorem  $\Psi^{i}(A_{0},\ldots,A_{i-1},\bullet)$ has an attracting fixed point $A_{i}$.
         \item[(c)]  It follows from Lemma \ref{l3.0} that $\Psi^{i}(.,A_{i}):X \to Y $ is continuous.
    \end{itemize}
   Therefore, from $(a), (b),$ and $ (c),$ we deduce that $\widetilde{N}_{i}:X_{i}\times  X_{i}$ satisfies the three conditions of  Theorem \ref{FCT}. Thus, we conclude that there exists a global attracting fixed point $(A_{0},A_{1},\ldots,A_{i})$ to the function $\widetilde{N}_{i},$ which completes the proof.
\end{proof}
Now we are ready to prove the Proposition \ref{propstep2}, which we recall here.
\begin{prop}
If $\overline{\nu} \in \mathcal{A}_{L}$ is a $C^{k}$ function and  $D^{i}\overline{\nu}(x,0)=0, 0\leq i \leq k$ and $(x,0) \in D_{0}.$ Then the following limit  exists
 \begin{equation*}\label{ME:3}
 \displaystyle \lim_{n\to \infty}(\Gamma^{n}(\overline{\nu}),D(\Gamma^{n}(\overline{\nu})),\ldots,D^{k}(\Gamma^{n}(\overline{\nu})))=(\nu^{*}, A_{1},A_{2},\ldots,A_{k}),
 \end{equation*}
 where $A_{1},A_{2},\ldots,A_{k}$ are continuous functions.
\end{prop}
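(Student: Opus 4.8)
The plan is to recognize that Proposition \ref{propstep2} is essentially a repackaging of the machinery already assembled: the operator $\widetilde{N}_{k}$ of Proposition \ref{l3}, the identification of $\Psi^{i}$ with the $i$-th derivative of $\Gamma$ from Proposition \ref{wdpsi}, and the regularity/vanishing statement of Proposition \ref{Di1}. First I would observe that, under the hypotheses, $\overline{\nu}\in\mathcal{D}_{0}=\mathcal{A}_{L}$ and, since $\overline{\nu}$ is $C^{k}$ with $D^{i}\overline{\nu}(x,0)=0$ for $0\le i\le k$, each derivative $D^{i}\overline{\nu}$ is a continuous, symmetric $L^{i}_{s}(\R^{n+1},\R^{n})$-valued map vanishing on $D_{0}$, hence $D^{i}\overline{\nu}\in\mathcal{D}_{i}$. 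Therefore the tuple $(\overline{\nu},D\overline{\nu},\ldots,D^{k}\overline{\nu})$ is a point of $X_{k}=\mathcal{D}_{0}\times\mathcal{D}_{1}\times\cdots\times\mathcal{D}_{k}$.

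Next I would establish, by induction on $n$, the key identity
\[
\widetilde{N}_{k}^{\,n}(\overline{\nu},D\overline{\nu},\ldots,D^{k}\overline{\nu})=\bigl(\Gamma^{n}(\overline{\nu}),D\Gamma^{n}(\overline{\nu}),\ldots,D^{k}\Gamma^{n}(\overline{\nu})\bigr).
\]
The case $n=0$ is trivial. For the inductive step, note that by Proposition \ref{Di1} the function $\Gamma^{n}(\overline{\nu})$ is again $C^{k}$ with $D^{i}\Gamma^{n}(\overline{\nu})(x,0)=0$ for $1\le i\le k$, and it belongs to $\mathcal{A}_{L}$ so the case $i=0$ holds too; consequently $(\Gamma^{n}(\overline{\nu}),\ldots,D^{k}\Gamma^{n}(\overline{\nu}))\in X_{k}$. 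Applying $\widetilde{N}_{k}$ to this tuple, its first coordinate is $\Gamma^{n+1}(\overline{\nu})$ by definition, and its $i$-th coordinate is $\Psi^{i}(\Gamma^{n}(\overline{\nu}),D\Gamma^{n}(\overline{\nu}),\ldots,D^{i}\Gamma^{n}(\overline{\nu}))$, which by Proposition \ref{wdpsi} equals $D^{i}\Gamma(\Gamma^{n}(\overline{\nu}))=D^{i}\Gamma^{n+1}(\overline{\nu})$. This closes the induction.

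Finally, Proposition \ref{l3} asserts that $\widetilde{N}_{k}$ possesses a global attracting fixed point $(A_{0},A_{1},\ldots,A_{k})\in X_{k}$, so $\widetilde{N}_{k}^{\,n}(\overline{\nu},D\overline{\nu},\ldots,D^{k}\overline{\nu})\to(A_{0},\ldots,A_{k})$ in the norm of $X_{k}$. Since that norm is the maximum of the coordinate norms, each coordinate converges; combined with the identity above this yields $D^{i}\Gamma^{n}(\overline{\nu})\to A_{i}$ for $0\le i\le k$. In particular $\Gamma^{n}(\overline{\nu})\to A_{0}$ uniformly, and since $\Gamma$ has the unique attracting fixed point $\nu^{*}$ (Proposition \ref{fgamma}(c)) we must have $A_{0}=\nu^{*}$. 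Each $A_{i}$ lies in $\mathcal{D}_{i}$ and is therefore continuous, which is exactly the asserted conclusion.

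The only genuine content beyond bookkeeping is the inductive identity, whose validity rests precisely on Proposition \ref{Di1} (to keep every iterate inside $X_{k}$, i.e.\ $C^{k}$ with derivatives vanishing on $D_{0}$) and on Proposition \ref{wdpsi} (to recognize $\widetilde{N}_{k}$ as acting on genuine derivative tuples); once these are in place, the convergence is delivered for free by the Fiber Contraction Theorem encoded in Proposition \ref{l3}. I expect the main subtlety to be purely notational, namely making sure at each step that the tuple $(\Gamma^{n}(\overline{\nu}),\ldots,D^{k}\Gamma^{n}(\overline{\nu}))$ is a legitimate element of $X_{k}$ and that $\Psi^{i}$ is fed exactly the derivatives of its first argument, so that Proposition \ref{wdpsi} applies verbatim.
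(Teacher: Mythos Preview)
Your proposal is correct and follows essentially the same route as the paper: establish by induction the identity $\widetilde{N}_{k}^{\,n}(\overline{\nu},D\overline{\nu},\ldots,D^{k}\overline{\nu})=(\Gamma^{n}(\overline{\nu}),D\Gamma^{n}(\overline{\nu}),\ldots,D^{k}\Gamma^{n}(\overline{\nu}))$, then invoke the global attracting fixed point of $\widetilde{N}_{k}$ from Proposition~\ref{l3}. In fact you spell out more carefully than the paper does why each iterate remains in $X_{k}$ (via Proposition~\ref{Di1}) and why $\widetilde{N}_{k}$ acts as the derivative map on such tuples (via Proposition~\ref{wdpsi}).
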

\begin{proof}[\textbf{Proof of Theorem \ref{propstep2}}]
Let $\overline{\nu} \in \mathcal{A}_{L}$ be a $C^{k}$ function such that  $D^{i}\overline{\nu}(x,0)=0,$ for all $0\leq i \leq k$ and  $(x,0)\in D_{0}.$  By induction, it follows that
\begin{equation*} \label{l4} \widetilde{N}_{i}^{n}(\overline{\nu},D\overline{\nu},\ldots,D^{i}\overline{\nu})=(\Gamma^{n}(\overline{\nu}),D(\Gamma^{n}(\overline{\nu})),\ldots,D^{i}(\Gamma^{n}(\overline{\nu}))).
\end{equation*}
Hence, on account of Proposition \ref{pfpsi0} one obtains
 \begin{equation*}\label{ME:3}
 \displaystyle \lim_{n\to \infty}(\Gamma^{n}(\overline{\nu}),D(\Gamma^{n}(\overline{\nu})),\ldots,D^{k}(\Gamma^{n}(\overline{\nu})))=(\nu^{*}, A_{1},A_{2},\ldots,A_{k}),
 \end{equation*}
 where $A_{j} \in \mathcal{D}_{j},$ for all $1\leq j\leq k,$ which concludes the proof.
\end{proof}
\nocite{Shashkov}
\nocite{Guckenheimer}
\nocite{Guckenheimer1}
\nocite{Williams}
\nocite{Lorenz}
\nocite{Pacifico}
\nocite{Robinson}
\nocite{Shilnikov}
\nocite{Robinson1}
\nocite{Rovella}
\nocite{Afraimovichbook}
\nocite{Belickii}
\def\cprime{$'$} \def\cprime{$'$} \def\cprime{$'$} \def\cprime{$'$}
  \def\cprime{$'$} \def\cprime{$'$} \def\cprime{$'$} \def\cprime{$'$}
  \def\cprime{$'$} \def\cprime{$'$} \def\cprime{$'$}

\end{document}